\newcolumntype{L}{>{$}l<{$}} % math-mode version of "l" column type
 \numberwithin{equation}{section}
	\DeclareMathOperator{\Hess}{Hess}
	\DeclareMathOperator{\grad}{grad}
	\newcommand*{\op}[1]{{\mathrm{#1}}}
	\newcommand{\modop}{\slashed \bigtriangleup} %- the modular operator
	\newcommand{\logmodop}{\slashed \bigtriangledown}
	\DeclareMathSymbol{\Gamma}{\mathalpha}{operators}{0}
	\DeclareMathSymbol{\Delta}{\mathalpha}{operators}{1}
	\DeclareMathSymbol{\Theta}{\mathalpha}{operators}{2}
	\DeclareMathSymbol{\Lambda}{\mathalpha}{operators}{3}
	\DeclareMathSymbol{\Xi}{\mathalpha}{operators}{4}
	\DeclareMathSymbol{\Pi}{\mathalpha}{operators}{5}
	\DeclareMathSymbol{\Sigma}{\mathalpha}{operators}{6}
	\DeclareMathSymbol{\Upsilon}{\mathalpha}{operators}{7}
	\DeclareMathSymbol{\Phi}{\mathalpha}{operators}{8}
	\DeclareMathSymbol{\Psi}{\mathalpha}{operators}{9}
	\DeclareMathSymbol{\Omega}{\mathalpha}{operators}{10}
	\renewcommand{\epsilon}{\varepsilon}
	\DeclareMathSymbol{\T}{\mathbin}{AMSb}{"54}
\newcommand{\defeq}{\vcentcolon=}
\newcommand{\pFq}[2]{\ensuremath{
        {\phantom{\,}}_{#1}F_{#2}
}
    }
\newcommand*{\Scale}[2][4]{\scalebox{#1}{$#2$}}
\begin{document}
 \author{Yang Liu}
 \address{
 Max Planck Institute for Mathematics,
 Vivatsgasse 7,
 53111 Bonn,
 Germany
 }
 \email{liu.858@osu.edu}
\title{
    Hypergeometric function and modular curvature
}

\keywords{hypergeometric functions, Appell functions, Lauricella functions,
    noncommutative tori,
    pseudo differential calculus, modular
curvature, heat kernel expansion} \date{\today} \thanks{} 

\begin{abstract}
    %\section*{}
We first show that hypergeometric functions appear naturally as spectral
functions when applying pseudo-differential calculus to decipher heat kernel
asymptotic in the situation where    the symbol algebra is noncommutative.
Such observation leads to a unified (works for arbitrary dimension) method of 
computing the modular  curvature on toric noncommutative manifolds.
We show that the spectral functions that define the
quantum part of the curvature  have closed forms in terms of hypergeometric
functions.  As a consequence,
we are able to obtained explicit
expressions (as functions in the dimension parameter)
     for those spectral functions without using
symbolic integration. A surprising geometric consequence is that the
functional relations coming from the variation of  the associated
Einstein-Hilbert action still hold when the dimension parameter takes real
values.

\end{abstract}

\maketitle

\tableofcontents

\section{Introduction}

% In noncommutative geometry, the geometric space is implemented by a spectral triple 
% $(\mathcal A,\mathcal H,D)$ which is modeled on Connes's point of view of spin
% geometry: $(C^\infty(M), L^2(\slashed S),\slashed D)$
% 
% in which $\mathcal A$ plays the role of the algebra of
% coordinate functions with a representation $\mathcal A \subset \mathcal H$ and
% the metric  
% 
% 
% Modular curvatures  refer to the  modular scalar curvatures on noncommutative
% two  tori associated to conformal change (with respect to the flat one) of metrics
% \cite{MR3194491,MR3148618,Lesch:2015aa}. 
% 
% The word ``modular curvature'' refer to the noncommutative analog of scalar
% curvature on some noncommutative manifolds. The first example on which the
% computation has been carried out in great detail in noncommutative two tori
% associated to conformal change (with respect to the flat one) of metrics, 
% \cite{MR3194491,MR3148618,Lesch:2015aa}.
\subsection{Modular geometry}
In noncommutative geometry, a notion of intrinsic curvature for certain
class of noncommutative manifolds, called modular
(scalar) curvature, has only recently begun to be comprehended.  The first
example on which the computation was  carried out in great detail is
noncommutative two tori
associated to conformal change (with respect to the flat one) of metrics, 
cf. \cite{MR3194491,MR3148618,MR3540454},  
for noncommutative four tori, \cite{MR3369894,MR3359018}. Later in
\cite{Liu:2015aa,LIU2017138}, the author
extended such  approach  to all even dimensional
toric noncommutative manifolds (also known as Connes-Landi noncommutative
manifolds, cf. \cite{MR1184061,MR2403605,MR1937657,MR1846904}). 

The word ``modular'', taken from modular theory for von Neumann algebras,
emphasizes  the new ingredients (beyond the Riemannian part
of the curvature) brought in by the noncommutativity of the metric. 
In Riemannian geometry,
the conformal class of metrics $[g]$ can be parametrized by a commutative
coordinate, more precisely, a positive smooth function called a Weyl factor $k
= e^h$. Sometimes, it is more convenient to work with its logarithm $h = \log
k$ which is a real-valued coordinate. 
For the noncommutative manifolds that have been studied, whose topological
structure is presented by a $C^*$-algebra, the Weyl factor $k$ becomes
a noncommutative coordinate, namely, a positive invertible elements
in the ambient $C^*$-algebra, which can be written as an exponential $k = e^h$
with $h$ self-adjoint. Local geometric invariants such as Riemannian curvature
are extracted from coefficients $ V_j(a,\Delta_k)$, viewed as functionals (in
$a$)  on the underlying $C^*$-algebra, of the heat kernel expansion:    
\begin{align}
    \label{eq:intro-heat-expansion}
    \Tr(a e^{-t\Delta_k} ) \backsim_{t\searrow 0}
    \sum_{j=0}^\infty V_j(a,\Delta_k) t^{(j-m)/2}, 
%     \,\,\,
%     \forall a \in C^\infty(T_\theta^2),
\end{align}
where $m$ is the dimension of the manifolds and $\Delta_k$ is a perturbation
 of the  Laplacian $\Delta\defeq \Delta_g$ associated to a background metric $g$,
which behaves like a Riemannian one and admits a pseudo-differential calculus,
cf. \cite{LIU2017138}. Intuitively, one can think of $\Delta_k$ as
a quantization \footnote{The word
``quantization'' simply means making something into an operator.} of the
conformal change of metric:
$g' = k g$.  In this paper, we only consider the simplest perturbation:
$\Delta_k \defeq k \Delta$. By analogy with the results in Riemannian geometry
(cf. \cite{MR0400315}), we define  the scalar curvature of the metric
$\Delta_k$ to
be the functional density  $R_{\Delta_k}$ of the
second term in \eqref{eq:intro-heat-expansion}:  
\begin{align}
    \label{eq:intro-V2-functional-density}
    V_2(a, \Delta_k) = \varphi_0(a R_{\Delta_k}) 
%     \,\,\, \forall a \in
%     C^\infty(\T^2_\theta), 
\end{align}
with respect to $\varphi_0$,  a tracial functional defined by the volume form
of the background metric $g$. The trace property of $\varphi_0$ is
a consequence of the Riemannian features of the background metric. In contrast, 
the volume form of metric $g'$ or $\Delta_k$ is associated to  
the functional $V_0(a,\Delta_k)$ in \eqref{eq:intro-heat-expansion}, which 
is a only state. The modular theory (Tomita-Takesaki
theory) asserts that, for such a state, there exists a one parameter group
$\sigma_t$ of automorphisms of the ambient von Neumann algebra measuring to
which extent the state fails to be a trace. A generator of $\sigma_t$ is called
a modular operator.  In our case, the modular operator $\mathbf y$ is  simply
the conjugation by the Weyl factor:
\begin{align}
    \mathbf y(a) = k^{-1} a k, \,\,\,
    \sigma_t(a) = \mathbf y^{it}(a), 
    \label{eq:intro-defn-modop-oneparagp}
\end{align}
where $a$ belongs to the von Neumann algebra. In terms of the logarithm
coordinate $h =\log k$, we can consider the modular derivation: $\mathbf
x \defeq \log \mathbf y = -\mathrm{ad}_h = [\cdot,h]$, which is new type of
differential generated by the modular automorphisms that one does not see in
the commutative setting.  
For higher $V_j$'s, the action of
the modular operator or modular derivation is realized by some intriguing
spectral functions such as 
$K_{\Delta_k}$  and $H_{\Delta_k}$ appeared in Eq. \eqref{eq:psecal-R_Delta_k}. 
In dimension two \cite{MR3540454,MR3194491}, the one variable function $K$ is
the Bernoulli generating function, which is related to the Todd class in
topology. In dimension four \cite{LIU2017138}, it is
the product of the exponential function and the $j$-function:
$e^x(\sinh(x/2)/(x/2))$, which shows striking similarity to the Atiyah-Singer
local index formula. 
Seeking for deeper understanding of those spectral functions and related
functional relations, both conceptually and computationally, is one of the main
motivations that are pushing the project forward.  

% The first main result of the paper is the observation that hypergeometric
% functions plays an important role in the pseudo-differential approach for the
% heat kernel expansion.
\subsection{Pseudo-differential calculi and hypergeometric functions}
The technical tool that being used to decipher the heat kernel expansion is
a pseudo-differential calculus which is suitable for studying the spectral
geometry of the underlying manifolds, such as Connes's calculus  for
noncommutative tori \cite{connes1980c}
and deformation of Widom's calculus for general toric noncommutative manifolds
\cite{Liu:2015aa}. 
In general, a pseudo-differential calculus provides
a recursive algorithm to construction a sequence of symbols
$\set{b_j}_{j=0}^\infty$, whose
truncated sums $\sum_0^N b_j$ approximate the resolvent of the elliptic
operator in question, as $N\rightarrow \infty$, so that the  
heat coefficients $V_j$ can be obtained  by certain integration of the
corresponding
$b_j$, see \eqref{eq:psecal-mathcalR_j}.
The first approximation   $b_0 = (p_2-\lambda)^{-1}$ is the resolvent of $p_2$,
the leading symbol 
of the elliptic operator in question. Higher $b_j$'s  are finite
sums of the form:   
\begin{align}
    \label{eq:intro-b_j-Fsum}
   b_j = \sum b_0^{l_0} \rho_1 b_0^{l_1} \rho_2 \cdots b_0^{l_{n-1}}
    \rho_n b_0^{l_n} ,
\end{align}
where the exponents $l_j$'s are non-negative integers and $\rho_j$'s are the
derivatives of symbols of the elliptic operator. 
Notice that $b_0$ and $\rho_j$'s do
not commute in general, even in the commutative case  in which the symbols are
endomorphism-valued sections acting on some vector bundle. 
A useful trick  to handle the
non-commutativity is to rewritten the summands in
\eqref{eq:intro-b_j-Fsum} as ``contractions'', cf.
\cite{leschdivideddifference}:
\begin{align}
    \label{eq:intro-contractions}
     ( b_0^{l_0} \otimes \cdots \otimes b_0^{l_n})
    \cdot (\rho_1 \otimes \cdots \otimes \rho_n)
    \defeq b_0^{l_0} \rho_1 b_0^{l_1} \rho_2 \cdots b_0^{l_{n-1}}
    \rho_n b_0^{l_n},
\end{align}
because eventually, $\rho_j$'s can be factored out of the integration. 
The contribution to the action of the modular operator is given by the
operator-valued integral:
\begin{align*}
    &\,\, \int_0^\infty \frac{1}{2\pi i} \int_C
    e^{-\lambda} b_0^{l_0} \otimes \cdots \otimes b_0^{l_n} d\lambda 
    (r^{m-1} dr )\\
    =&\,\, 
\int_0^\infty \frac{1}{2\pi i} \int_C
e^{-\lambda} (k r^2-\lambda)^{-l_0}\otimes \cdots
\otimes (k r^2-\lambda)^{-l_n} d\lambda 
(r^{m-1} dr ),
\end{align*}
where $m$ is the dimension of the manifold and $k$ is 
a positive invertible element in some $C^*$-algebra. Method used in the
previous works  involves switching the order of integration, which depends on $m$
is a subtle way. In this paper, we attack  the contour integral directly by
replacing each $(kr^2-\lambda)^{-l_j}$ with its Mellin transformation
\eqref{eq:hgeofun-Mellin-Tran.}. A surprising  consequence is that the integral
turns out to be (upto a constant factor) a hypergeometric function or its
multi-variable generalization. We  shall mainly focus on 
 the case in which $n$
(appeared in  \eqref{eq:intro-contractions}) equals $1$ or $2$, since it is
sufficient for the studying of the $V_2$-term. 
% Nevertheless, the method works for arbitrary $n$ and will be investigated more
% in the future papers. 

By landing the building blocks of the spectral functions into the hypergeometric
family, we obtain tons of new functional relations,  among
which, the differential and contiguous relations are the fundamental ones.
Certain patterns of differential relations have already been observed in
\cite{leschdivideddifference,2016arXiv161109815C}.  The contiguous relations
seems like the other side of a coin to the differential relations, which lead
us to some number theoretical properties, such as  Gauss's continued
fraction, see Eq. \eqref{eq:hggeofun-DR-tildeKan-GCF}. From the computational
side, symbolic integration is replaced by differentiation and recurrence
relations. In particular,  we have remove the restriction in  \cite{LIU2017138}
that the manifold has to be even dimensional; we have recurrence relations
among all even dimensions and all odd dimensions; we are able to express the
spectral functions explicitly in the dimension parameter $m$.

\subsection{Variation of the EH (Einstein-Hilbert) action}
The geometric application of the local expression of the modular scalar
curvature studied in this paper is the variation of the 
 EH-functional/action. In our operator theoretical framework, it
is encoded in the  second heat coefficient
$V_2(a,\Delta_k)$, which can be viewed as a function in both $a$ and $k$. 
When the metric coordinate $k$ is fixed, we vary $a$ to recover the functional density,
which is the modular scalar curvature \eqref{eq:intro-V2-functional-density}.
On the other hand, we can take $a = 1$ and view it a functional in $k$, 
\begin{align*}
    k \mapsto V_2(1,\Delta_k) = \varphi_0 (R_{\Delta_k}),
\end{align*}
where the right hand side  mimics the integration of the scalar curvature
against the corresponding volume form $\varphi$ in Eq.
\eqref{eq:varprob-volume-varphi}\footnote{
    Notice that in Eq. \eqref{eq:psecal-R_Delta_k}, the volume factor $k^{j_m}$
    is already included in $R_{\Delta_k}$,
    therefore, one should apply the functional $\varphi_0$, instead of
    $\varphi$, to recover the EH-action.
}, which
is the EH-action in the commutative setting. The modular
curvature $R_{\Delta_k}$ itself involves two spectral functions $K_{\Delta_k}$
and $H_{\Delta_k}$ of one and two variables respectively. On the other hand,
its integration 
$\varphi_0(R_{\Delta_k})$, can be described by  only  one
spectral function $T_{\Delta_k}$, which is determined, of course, by $K_{\Delta_k}$
and $H_{\Delta_k}$, see \eqref{eq:varprob-defn-TDleta}. It was shown in
\cite{LIU2017138}  that the
EH-action determines the scalar curvature functional completely, namely, if we know
$V_2(1,\Delta_k)$ for all positive invertible elements $k$, then we can recover
$V_2(a,\Delta_k)$ for any ``function'' $a$. The proof is carried out by computing
the functional gradient (by means of G\^{a}teaux differential, see Definition
\ref{defn:varprob-gradFEH})  in two
different ways: by differentiating the local expression $\varphi_0
(R_{\Delta_k})$ and the trace of the heat operator $\Tr (e^{-t\Delta_k})$
respectively. The later one is easier to compute, which shows that the gradient
is the almost the same as the scalar curvature, in particular, the spectral
functions are still $K_{\Delta_k}$ and $H_{\Delta_k}$. This is
a noncommutative
analog of the following fact in Riemannian geometry: for the conformal change
of metric 
$g' = u^{4/(m-2)}g$, one sees the scalar curvature of $g'$ in  the
Euler-Lagrange equation of the Yamabe functional, which is a normalization of
the EH-functional, cf. \cite[Eq. 1.11]{yamabe1960}. For the former one, the
calculation, which is discussed in Section
\ref{sec:varcalwrptncvar}, is much longer. The bottom line is that the spectral
functions of
the gradient can be completely derived from $T_{\Delta_k}$,  see Eq.
\eqref{eq:varprob-IT-explicit}, by two basic
transformations.  One comes from integration by parts:
$T_{\Delta_k}(u) \mapsto
T_{\Delta_k}(u^{-1})$, see \eqref{eq:varprob-integrationbyparts-I}. The other
arises from commutators of 
 the covariant differential  (commutative differential) and  the modular
operator/derivation (noncommutative differential):
\begin{align*}
T_{\Delta_k}(u) \mapsto D( T_{\Delta_k})(u,v) = \frac{T_{\Delta_k}(uv)
- T_{\Delta_k}(u) }{
T_{\Delta_k}(v) -1 }.
\end{align*}
Such phenomenon  fits with the classical picture that the curvature $R(X,Y)$ is
designed to
measure the commutator $[\nabla_X ,\nabla_Y]$ of different differentials. By
equating two approaches, we
obtain functional relations
stated in Theorem \ref{thm:varprob-thm-FunRelation-T-to-K-Ha} which tell us 
exactly how to reproduce $K_{\Delta_k}$ and $H_{\Delta_k}$ from $T_{\Delta_k}$.

A new input in present paper is that the variation is performed with respect to the
Weyl factor $k$ itself, instead of $h=\log k$ as in all previous works
\cite{MR3540454,MR3194491,MR3369894,LIU2017138}. One of the advantages is to
get rid of the spectral functions, cf. \cite[Eq. (6.9), (6.10)]{MR3194491},
when computing $\nabla e^h$ and $\nabla^2 e^h$. Such modification is merely an
operation like changing coordinates, which alters nothing of the underlying
geometric objects. Nevertheless, picking a good coordinate system could be very
helpful when attacking specific problems. 

% The difference between using
% $k$ or $\log k$ is a sort of change of coordinates. Due to the noncommutativity
% of the coordinates,
% complicated spectral functions appear as the correction of the failure of the
% chain rule in calculus.
% which leads to some natural 
% questions for future papers.
% For example, when studying
% the Yamabe problem, one often considers  the conformal change of metric $g'
% = u^{4/(m-2)}g$ for $m>2$ so that the scalar curvature of $g'$ does not involve
% the first covariant differential  $\nabla u$. Same question can be asked in
% the noncommutative setting, namely, can we find a smooth positive function on
% $\R$ so that with respect to $u = f(\log k)$, the corresponding modular scalar
% curvature as in Eq. \eqref{eq:varprob-scacur-for-Delta-varphi}  has nothing to
% do with $\nabla u$? The problem becomes much harder because one needs to
% balance the ``coefficients'' of $\nabla^2 u$ and $\nabla u$, which are
% functions of different variables, according to certain functional equations
% arising from changing coordinates.

  The discussion of EH-action above requires that the
dimension of the manifold $m$ is greater than $2$. In fact, in
dimension two,
 the celebrated Gauss-Bonnet theorem  asserts that the EH-action is
a constant function whose value, upto a normalization, equals the Euler
characteristic of the given manifold. The corresponding result for
noncommutative two tori was studied in \cite{MR2907006,MR2956317}. One can also
see this from the factor $(2-m)/2$ appeared in the right hand side of Eq.
\eqref{eq:varprob-V_j-step1}. An interesting functional with non-trivial 
variation in dimension two is
the Ray-Singer log-determinant functional, which has been intensively studied in  
\cite{MR3194491}.

The last  contribution of this paper is the final expressions of $K_{\Delta_K}$,
$H_{\Delta_k}$ and $T_{\Delta_k}$ (see Eq.
\eqref{eq:checkrels-explict-KDelta}, \eqref{eq:checkrels-explict-HDelta} and
\eqref{eq:checkrels-explict-TDelta}) as functions of $m$, the
dimension parameter, thanks to the available knowledge of hypergeometric functions.
One can performed straightforward verification with the assistant of
\textsf{Mathematica} to see that the three
functions satisfy the equations given in Theorem
\ref{thm:varprob-thm-FunRelation-T-to-K-Ha} for all $m \in (2,\infty)$.
For $m=2,3,\dots$ being integers, such verification gives conceptual
confirmation for the validity of the lengthy calculation. While for real values
$m$, functional relations  is a surprising but exciting by product, which do
not admit a geometric proof so far.  Nevertheless,
the geometric significance is that it extends the ``universality'' of those
spectral functions in the sense that  the pseudo-differential approach of
studying local geometry has the
potential to be applied onto  noncommutative manifolds of non-integer dimensions. In
our operator theoretical framework, the (metric) dimension  is determined by
the Weyl's law, that is, by
the rate of growth of the spectrum of the Laplacian
operator, thus it takes real values in a natural way.  
We refer to  \cite[Sect. 10.2]{connes2008noncommutative} for detailed
explanation and to   \cite{MR1303779} for examples.  

\subsection{Outline of the paper}
%The paper is organized as follows.
In section \ref{sec:spfun-hygeofun}, we show that the Euler type integral
representations of  hypergeometric functions $\pFq21$ and Appell's $F_1$
functions appears naturally in the pseudo-differential approach of heat kernel
expansion. Results of this type are called rearrangement lemmas in the literature 
\cite{leschdivideddifference,MR3194491,MR2907006}. The most general form so far
is given 
in Prop. \ref{prop:hgfun-n-var}.  
Such observation brings in powerful tools for computations. For
example, we have elegant 
differential and contiguous functionals relations to replace symbolic
integrations. Among which,  
Prop. \ref{prop:hgeofun-cor-F1-dividedidff} is the most important technical
result, which reduces double integrals (Appell's $F_1$ functions) to a divided
difference of hypergeometric functions. Section \ref{sec:varcalwrptncvar}
and \ref{sec:var-FEH} contains the geometric part:  the variation of the EH-action.
We try our best to
explore the connections between ``individual'' functionals relations among
hypergeometric  
functions and the ``global'' relations stated in Theorem
\ref{thm:varprob-thm-FunRelation-T-to-K-Ha}. For completeness, we outline the
calculation for the modular curvature  on noncommutative tori of arbitrary
dimension $m\ge 2$ in the last section. 

\section{Hypergeometric functions in heat kernel expansion}
\label{sec:spfun-hygeofun}
In this section, we give a new method to prove and generalize the rearrangement
lemmas in the literature \cite{MR2907006,MR3194491,leschdivideddifference}.
A surprising discovery is the appearance of hypergeometric function and its
multivariable generalizations in the part of the spectral functions. The most
general version so far is given in Prop. \ref{prop:hgfun-n-var}.

% The heat operator is given by a contour integral
% \begin{align*}
%     e^{-t \Delta_\varphi} = \int_C \frac{1}{2 \pi i}
%     e^{-t\lambda} (\Delta_\varphi - \lambda) d\lambda.
% \end{align*}
% Since the spectrum of $\Delta_\varphi$ lies in the half plane, 

\subsection{Contour integral for the heat operator}
We start with a lemma which handles the contour integral that defines the heat
operator. In our applications,  the elliptic operator $P$ has spectrum
contained in $[0,\infty)$. Therefore, we can choose
the   contour $C$ to be   the imaginary axis from $-i\infty $ to $i\infty$, so
that 
\begin{align*}
    e^{-tP} = \frac{1 }{2\pi i} \int_C e^{-t\lambda} (P-\lambda)^{-1} d\lambda
    = (2\pi)^{-1} \int_0^\infty e^{-tix} (P-ix)^{-1} dx.
\end{align*}

\begin{lem}
    \label{lem:hgeofun-contourint}
    Let $A,B,C$ be  positive real numbers and $a,b,c$ be  negative integers, 
    we have
    \begin{align}
        \label{eq:hgeofun-lem-cintAB}
        (2\pi)^{-1} \int_{-\infty}^{\infty} e^{-ix} 
    (A - ix)^a ( B - ix)^b dx  =
    \frac{1 }{\Gamma(a) \Gamma(b) }\int_0^1  (1-t)^{a-1} t^{b-1}  
e^{-(A - (A-B)t)} dt
    \end{align}
    and
    \begin{align}
\label{eq:hgeofun-lem-cintABC}
\begin{split}
    &\,\,\,   (2\pi)^{-1} \int_{-\infty}^{\infty} e^{-ix} 
         (A - ix)^a ( B - ix)^b (C-ix)^c dx  \\
       =  &\,\,\,  \frac{1 }{\Gamma(a) \Gamma(b) \Gamma(c)}
         \int_0^1 \int_0^{1-t} (1 - t -u)^{a-1} t^{b-1} u^{c-1}   
    e^{
    -(A -(A -B)t - (A - C)u )
} dudt.
\end{split}
       \end{align}
\end{lem}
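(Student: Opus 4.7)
The plan is to use the Mellin/Laplace representation
$$(A - ix)^{-a} = \frac{1}{\Gamma(a)} \int_0^\infty s^{a-1} e^{-s(A - ix)}\, ds,$$
valid for $\operatorname{Re} a > 0$ and $A > 0$, to replace each rational factor on the left-hand side by an exponential in $x$. This is exactly the device highlighted in the paragraph preceding the lemma, where the author announces that contour integrals will be attacked directly by Mellin transformation.

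For the two-factor identity \eqref{eq:hgeofun-lem-cintAB}, I would substitute the Mellin representations for both factors into the LHS, interchange the order of integration (justified by absolute convergence once the parameters are arranged so that all integrands are summable), and collect the $x$-dependent terms into a single exponential $e^{-ix(1 - s - r)}$. The outer $x$-integral
$$\frac{1}{2\pi}\int_{-\infty}^{\infty} e^{-ix(1 - s - r)}\, dx = \delta(1 - s - r)$$
then forces $s + r = 1$, collapsing the double $(s,r)$ integral over the positive quadrant to an integral over a unit segment. Parametrizing this segment by $s = 1-t$, $r = t$ with $t \in (0,1)$ and simplifying $A s + B r = A - (A - B)t$ recovers the right-hand side exactly.

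For the three-factor identity \eqref{eq:hgeofun-lem-cintABC}, the same argument with three Mellin factors produces a delta function $\delta(1 - s_1 - s_2 - s_3)$, confining the triple integration to the standard $2$-simplex. Parametrizing this simplex by $s_1 = 1 - t - u$, $s_2 = t$, $s_3 = u$ with $t, u \ge 0$ and $t + u \le 1$ yields the stated iterated double integral; the exponent collapses to $-(A - (A-B)t - (A-C)u)$ as claimed.

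The only delicate point is the appearance of the delta function: to sidestep distributions, I would either insert a Gaussian regulator $e^{-\epsilon x^2}$ and pass to the limit $\epsilon \to 0^+$, or, more cleanly, recognize the LHS of \eqref{eq:hgeofun-lem-cintAB} as the inverse Fourier transform of the product $(A - ix)^{-a}(B - ix)^{-b}$ evaluated at $t = 1$, and apply the convolution theorem directly. Each factor is individually the Fourier transform of the one-sided exponential $\tfrac{s^{a-1}}{\Gamma(a)} e^{-As} \mathbf{1}_{s>0}$, so the product transforms to the Dirichlet-type convolution, whose value at $1$ is immediately the beta integral on the right-hand side. This convolutional viewpoint makes the extension to $n$ factors completely mechanical and is the natural bridge to Prop.~\ref{prop:hgfun-n-var}.
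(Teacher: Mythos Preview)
Your proposal is correct and follows essentially the same route as the paper: the paper also substitutes the Mellin representation \eqref{eq:hgeofun-Mellin-Tran.} for each factor, interchanges the order of integration, and then invokes the delta function $(2\pi)^{-1}\int_{-\infty}^{\infty}e^{ixy}\,dx=\delta(y)$ to collapse the integration domain to the simplex $\{s+t+u=1,\ s,t,u>0\}$. Your additional remarks on rigorizing the delta-function step via a Gaussian regulator or the convolution theorem are a welcome refinement, but the core argument is the same.
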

\begin{rem}
   Notice that the  right hand side of \eqref{eq:hgeofun-lem-cintAB} is a confluent
    hypergeometric function:
    \begin{align}
        (2\pi)^{-1} \int_{-\infty}^{\infty} e^{-ix} 
        (A - ix)^a ( B - ix)^b dx  =  \frac{e^{-A}}{ \Gamma(a+b)} 
        \pFq11(a;a+b;B-A),
        \label{eq:hgeofun-F11-heat-contourint}
    \end{align}
    where the function $ \pFq11(a;b;B-A)$ has the following integral
    representation:
    \begin{align}
        \pFq11(a;b;z)= \frac{
            \Gamma(a-b) \Gamma(b)
        }{ \Gamma(a)}\int_0^1 t^{a-1} (1-t)^{b-a-1} e^{zt} dt.
        \label{eq:hgeofun-F11-int-rep-defn}
    \end{align}
     The identity  \eqref{eq:hgeofun-F11-heat-contourint} appeared   in the
     heat kernel related work \cite{MR1156063} and \cite{MR1843855}. This was the
     motivation at the early stage that brought the author's attenstion to 
     hypergeometric functions. 
 \end{rem}

\begin{proof}
    We only prove \eqref{eq:hgeofun-lem-cintABC} and leave
    \eqref{eq:hgeofun-lem-cintAB} to the reader. Observe  that Powers like
    $(A-ix)^a$ can be rewritten in terms of Mellin transform: 
    \begin{align}
        \label{eq:hgeofun-Mellin-Tran.}
        (A-ix)^a = \frac{1}{\Gamma(a)}\int_0^\infty s^{a-1} e^{-s(A-ix)} ds.
    \end{align}
    The rest of the computation is straightforward, denote  $\gamma(a,b,c) = 
    [\Gamma(a) \Gamma(b) \Gamma(c)]^{-1} $, then:
    \begin{align*}
        &\,\, (2\pi)^{-1} \int_{-\infty}^{\infty} e^{-ix} 
    (A - ix)^a ( B - ix)^b (C - ix)^c dx \\ =& \,\, 
    (2\pi )^{-1} \gamma(a,b,c) \int_{-\infty}^{\infty}
  \int_{[0,\infty)^3} 
      e^{-ix}
      s^{a-1}  t^{b-1}  u^{c-1} 
        e^{-s(A-ix) -t(B- ix) - u(C - ix)} dsdtdu  dx \\
=&\,\, 
 \gamma(a,b,c) \int_{[0,\infty)^3}  s^{a-1}  t^{b-1}  u^{c-1} 
      e^{-sA -Bt-Cu} dsdtdu
      \brac{ (2\pi)^{-1} \int_{-\infty}^{\infty}
       e^{ix(s+u+t-1)} dx } \\
    =& \,\,
   \gamma(a,b,c) \int_0^1 \int_0^{1-t} (1 - t -u)^{a-1} t^{b-1} u^{c-1}   
    e^{
    -(A -(A -B)t - (A - C)u )
    } dudt.
    \end{align*}
    For the last equal sign, we use the fact that  $(2\pi)^{-1} \int_{-\infty}^{\infty}
    e^{ixy} dx$ equals the Dirac-delta distribution $\delta(y)$, therefore the
    domain of  integration  is reduced  from $[0,\infty)^3$ to $$\set{s+t+u=1,
    \,\, s,t,u>0}.$$ 
\end{proof}

\subsection{Spectral functions in terms of hypergeometric functions}
\label{sec:spec-using-hgfuns}

%\subsection{Notations from \texorpdfstring{\cite{leschdivideddifference}}{Les17}.} 
\label{sec:notations-from-Les17}

% The contraction map defined in \cite{leschdivideddifference} plays a crucial
% role to deal with noncommutativity which will be recalled in this section. 

Let $\mathcal A$ be a unital $C^*$-algebra. For elementary tensors $a = (a_0 \dots,
a_n) \in \mathcal A^{\otimes n+1}$ and $\rho = (\rho_1, \dots , \rho_n) \in
\mathcal A^{\otimes n}$, we recall the contraction $a . \rho$ used in \cite{leschdivideddifference}: 
\begin{align*}
    a. \rho := a_0 . \rho_1 . a_1. \rho_2 \cdots a_{n-1} \rho_n . a_n.
\end{align*}

For a single element $k \in \mathcal A$ acting on a elementary tensor $\rho$ above (or a product $\rho_1 \cdots \rho_n$), we introduce multiplication in the $j$-th slot:
\begin{align}
    k^{(j)} \defeq (1,\dots, k,1\dots,1),\,\,\, \text{$k$ is in the $j$-th slot, 
    $j=0,1,\dots,n$,}
    \label{eq:hgeofun-k^j}
\end{align}
and conjugation on the $j$-th factor:
\begin{align}
    \mathbf y_j(k)\defeq (1,\dots,k^{-1}, k,1\dots,1), \,\,\,
    \text{$k^{-1}$ is in the $( j-1 )$-th slot, 
    $j=0,1,\dots,n$.}
    \label{eq:hgeofun-y_j(k)}
\end{align}
Later, we need to replace all the multiplication $k^{(j)}$, $j\ge 1$ by conjugation operators: 
\begin{align}
    \label{eq:hgeofun-kjandy_j}
    k^{(j)}  = k^{(0)} \mathbf y_1(k) \cdots \mathbf y_j(k).  
\end{align}
In the rest of the paper, one can take $\mathcal A$ to be a noncommutative
$m$-torus: $C^\infty(\T^m_\theta)$ (cf. \cite{MR1047281}).

When applying pseudo-differential calculus to compute heat kernel asymptotic,
one encounter integrals involving the resolvent approximation symbols $b_j$:
\begin{align*}
    \int_0^\infty \frac{1}{2\pi i} \int_C e^{-\lambda} b_j(r,\lambda) 
    d\lambda r^{(m-1)/2} dr.
\end{align*}
In general, $b_j$ is a finite sum of the form:
\begin{align}
    \label{eq:hgeofun-summandsb_j}
    \sum  b_0^{j_0} . \rho_1 .  b_0^{j_1} . \rho_2 \dots \rho_l
    . b_0^{j_l},
\end{align}
where $b_0 = (k r^2 - \lambda)^{-1}$ and $\rho_{p}$  are of
the form $k^{a_p} \nabla^{b_p} k$ for some integers $a_p$ and $b_p$, where
$p=0,\dots,l$. 

% We need some notations from \cite{leschdivideddifference} to proceed. 
% Summands appeared in 
% \eqref{eq:hgeofun-summandsb_j} are contractions
% (cf.\cite[section 3.1.2]{leschdivideddifference}) between elementary tensors:
%     $b = (b_0^{j_0}, \dots , b_0^{j_n}) \in C^\infty(\T^m_\theta)^{\otimes
%     n+1}$ and $\rho = (\rho_1, \dots, \rho_n) \in
%     C^\infty(\T^m_\theta)^{\otimes n}$:
%     \begin{align*}
%         b. \rho :=   
%       b_0^{j_0} . \rho_1 .  b_0^{j_1} . \rho_2 \dots \rho_l
%       . b_0^{j_l}.
%     \end{align*}
% When acting on 
% 

Following the contraction notation in the previous section:
\begin{align*}
    & \,\, \int_0^\infty \frac{1}{2\pi i} \int_C e^{-\lambda} b_j(r,\lambda) 
    d\lambda r^{(m-1)/2} dr \\
    = &\,\,  \brac{
\int_0^\infty \frac{1}{2\pi i} \int_C e^{-\lambda}
(k^{(0)} r^2 - \lambda)^{-j_1}\cdots (k^{(n)} r^2 - \lambda)^{-j_l}
d\lambda r^{(m-1)/2} dr }
 \cdot (\rho_1 \cdots \rho_n).
\end{align*}

In this paper, we only need to integrate $b_2$ which involves terms 
with $l=1$ or $2$.

\begin{prop}
    \label{prop:hgeofun-one-var-family}
    Let  $a,b,m$ be positive integers, we abbreviate the conjugation operator
    defined in \eqref{eq:hgeofun-y_j(k)} as: $\mathbf y(\rho):= \mathbf y_1(k)(\rho)
    = k^{-1} \rho k$, then    
\begin{align}
    \begin{split}
    &\,\, \int_0^\infty \frac{1}{2\pi i} \int_C e^{-\lambda}
    (k^{(0)} r^2 - \lambda)^{-a} \cdot \rho \cdot  (k^{(1)} r^2 - \lambda)^{-b}
 d\lambda (r^{2 d_m +1} dr)   
  \\ = & \,\, 
      (k^{(0)})^{-(d_m+1)}
  \brac{
      \frac{\Gamma(  d_m  +1)}{2 \Gamma(a+b)} 
      \pFq21 (d_m+1;b,b+a;1- \mathbf y)      
  } \cdot \rho.
    \end{split}
 \label{eq:hgeofun-form-one-var}
\end{align}
where $d_m:= d_m(a,b)= a+b -2 + (m-2)/2$, and the contour $C$ can be taken
to be the imaginary axis  from $-i\infty$ to $i\infty$. 
We denote the result as
\begin{align}
    K_{a,c}(y;m) =  
      \frac{\Gamma(  d_m  +1)}{ \Gamma(a+b)} 
      \pFq21 (d_m+1;b,b+a;1-  y)      
    \label{eq:hgeofun-onevar-K_acm}
\end{align}
for later discussion. 
\end{prop}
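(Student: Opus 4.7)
The plan is to chain two elementary moves from Lemma~\ref{lem:hgeofun-contourint} with a Mellin/Euler identification of ${}_2F_1$. Crucially, in the integrand $k^{(0)}$ and $k^{(1)}$ live in different tensor slots and therefore commute, so all algebraic manipulations can proceed as if the symbols were scalars and only at the very end do we remember that the operators act on $\rho$.

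First I would apply the two-factor version \eqref{eq:hgeofun-lem-cintAB} with $A = k^{(0)} r^2$ and $B = k^{(1)} r^2$ (the exponents in the lemma being the $-a, -b$ of the proposition), which replaces the contour integral by
\begin{align*}
\frac{1}{\Gamma(a)\Gamma(b)}\int_0^1 (1-t)^{a-1} t^{b-1}\, e^{-r^2 Q(t)}\,dt,
\qquad Q(t)\defeq k^{(0)}(1-t) + k^{(1)} t .
\end{align*}
Next I would carry out the radial integral by the substitution $u=r^2$, which gives the standard Gamma evaluation
\begin{align*}
\int_0^\infty r^{2d_m+1} e^{-r^2 Q(t)}\,dr \;=\; \tfrac{1}{2}\,\Gamma(d_m+1)\, Q(t)^{-(d_m+1)} .
\end{align*}
This is legitimate because $Q(t)$ is a positive combination of the commuting positive operators $k^{(0)}, k^{(1)}$, so it has a well-defined positive inverse to any real power.

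Then comes the identification step. Because $k^{(0)}$ commutes with $k^{(1)}$ and is invertible, factor it out,
\begin{align*}
Q(t)^{-(d_m+1)} \;=\; (k^{(0)})^{-(d_m+1)}\bigl(1 - t(1 - (k^{(0)})^{-1} k^{(1)})\bigr)^{-(d_m+1)} ,
\end{align*}
and then observe that the operator $(k^{(0)})^{-1} k^{(1)}$ acts on $\rho$ precisely as the conjugation $\mathbf{y}(\rho) = k^{-1}\rho k$, in view of the slot conventions \eqref{eq:hgeofun-k^j}--\eqref{eq:hgeofun-y_j(k)}. This is the only place where one must be careful about what ``living in different slots'' means for the contraction $\cdot\,\rho$.

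Finally, the remaining $t$-integral is the Euler integral representation of ${}_2F_1$:
\begin{align*}
\int_0^1 t^{b-1}(1-t)^{a-1}\bigl(1 - (1-\mathbf{y})t\bigr)^{-(d_m+1)}\,dt
\;=\; \frac{\Gamma(a)\Gamma(b)}{\Gamma(a+b)}\,{}_2F_1\bigl(d_m+1, b;\, a+b;\, 1-\mathbf{y}\bigr).
\end{align*}
Combining the three factors $\tfrac{1}{2}\Gamma(d_m+1)$, the prefactor $[\Gamma(a)\Gamma(b)]^{-1}$ from the lemma, and $\Gamma(a)\Gamma(b)/\Gamma(a+b)$ from Euler produces the asserted constant $\Gamma(d_m+1)/[2\Gamma(a+b)]$. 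The main obstacle is not a deep one but a bookkeeping one: one has to justify the operator-valued Euler integral (absolute convergence and functional calculus for the positive operator $\mathbf{y}$ acting on the tensor slot containing $\rho$), and to keep straight the placement of $(k^{(0)})^{-(d_m+1)}$ which, thanks to commutativity across slots, can be pulled out to the left without reordering.
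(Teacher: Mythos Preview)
Your proof is correct and follows essentially the same route as the paper: apply Lemma~\ref{lem:hgeofun-contourint} to convert the contour integral into a simplex integral with an exponential kernel, perform the radial integral via the Gamma identity, factor out $(k^{(0)})^{-(d_m+1)}$ using $k^{(1)}=k^{(0)}\mathbf y$, and recognize the remaining $t$-integral as the Euler representation of ${}_2F_1$. The paper in fact only writes out the two-variable case (Proposition~\ref{prop:hgeofun-two-var-family}) explicitly and leaves the one-variable case to the reader; your argument is exactly the expected specialization, and the operator-calculus justifications you flag are precisely those the paper handles by citing Lesch's substitution lemma \cite[Thm.~2.2]{leschdivideddifference}.
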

\begin{rem}
    The appearance of $d_m(a,b)$ is due to the homogeneity, degree $-4$, of $b_2$ in
    $r$ (before the substitution $r \mapsto r^2$): 
    $b_2(c r) = c^{-4} b_2(r)$. In general, $b_j$ is of homogeneity $-2-j$ when
    the corresponding differential operator is of order $2$. 
\end{rem}
Similarly, we have a two-variable version which gives rise to the Appell
hypergeometric functions. 
\begin{prop}
    \label{prop:hgeofun-two-var-family}
    Let  $a,b,c,m$ be positive integers, we abbreviate the conjugation operator
    defined in \eqref{eq:hgeofun-y_j(k)} as: $ \mathbf y_j :=\mathbf y_j(k)$,
    with $j=1,2$. 
\begin{align}
    \begin{split}
  & \,\, \int_0^\infty 
  \frac{1}{2\pi i} \int_C e^{-\lambda}
  (k^{(0)} r^2 - \lambda)^{-a} \cdot \rho_1 \cdot  (k^{(1)} r^2 - \lambda)^{-b}
\cdot \rho_2 \cdot
  (k^{(2)} r^2 - \lambda)^{-c}
  d\lambda (r^{2d_m +1} dr)
 \\ &= \,\,
      (k^{(0)})^{-(d_m+1)}
   \brac{
      \frac{\Gamma(  d_m  +1)}{2 \Gamma(a+b +c)} 
      F_1(d_m+1;c,b;a+b+c;1-\mathbf y_1\mathbf y_2, 1-\mathbf y_1 )      
   } \cdot (\rho_1 \rho_2).   
    \end{split}
   \label{eq:hgeofun-form-two-var}
\end{align}
where $d_m:= d_m(a,b,c)= a+b+c -2 + (m-2)/2$, and the contour $C$ can be taken
to be the imaginary axis  from $-i\infty$ to $i\infty$. 
We denote the result as 
\begin{align}
    H_{a,b,c}(y_1, y_2;m )= 
      \frac{\Gamma(  d_m  +1)}{ \Gamma(a+b +c)} 
      F_1(d_m+1;c,b,a+b+c; 1-  y_1  y_2, 1-  y_1)      
    \label{eq:hgeofun-twovar-H_abcm}
\end{align}
for later discussion.
\end{prop}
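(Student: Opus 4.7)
The plan mirrors the proof of Proposition \ref{prop:hgeofun-one-var-family}, but now invokes the three-factor contour identity \eqref{eq:hgeofun-lem-cintABC} of Lemma \ref{lem:hgeofun-contourint} instead of the two-factor version. The key observation that licenses this is that the three resolvent factors $(k^{(j)}r^{2}-\lambda)^{-\alpha_{j}}$ for $j=0,1,2$ act on disjoint tensor slots and therefore commute pairwise before contraction with $\rho_{1}\otimes\rho_{2}$. Consequently the lemma applies verbatim with the ``scalar'' positive quantities there replaced by the mutually commuting operators $A=k^{(0)}r^{2}$, $B=k^{(1)}r^{2}$, $C=k^{(2)}r^{2}$, and exponents $-a$, $-b$, $-c$.

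Performing the $\lambda$-integral along the imaginary axis yields, after the change of variable $r^{2}\mapsto s$ and using $\int_{0}^{\infty}s^{d_{m}}e^{-sX}ds=\Gamma(d_{m}+1)X^{-(d_{m}+1)}$, the intermediate expression
\begin{align*}
\frac{\Gamma(d_{m}+1)}{2\,\Gamma(a)\Gamma(b)\Gamma(c)}\iint_{\triangle}(1-t-u)^{a-1}t^{b-1}u^{c-1}\,X(t,u)^{-(d_{m}+1)}\,du\,dt,
\end{align*}
where $\triangle=\{t,u\ge 0,\ t+u\le 1\}$ and $X(t,u)=k^{(0)}(1-t-u)+k^{(1)}t+k^{(2)}u$. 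Next I would factor out $k^{(0)}$ from $X$ and convert each multiplication $k^{(j)}$ into the corresponding conjugation operator by applying \eqref{eq:hgeofun-kjandy_j}; since $(k^{(0)})^{-1}k^{(j)}$ acts as $\mathbf y_{1}\cdots\mathbf y_{j}$ upon contraction with $\rho_{1}\rho_{2}$, one obtains
\begin{align*}
X(t,u)=k^{(0)}\bigl[\,1-t(1-\mathbf y_{1})-u(1-\mathbf y_{1}\mathbf y_{2})\,\bigr].
\end{align*}

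The remaining double integral is then identified with the Picard--Dirichlet representation of Appell's $F_{1}$,
\begin{align*}
F_{1}(\alpha;\beta,\beta';\gamma;x,y)=\frac{\Gamma(\gamma)}{\Gamma(\beta)\Gamma(\beta')\Gamma(\gamma-\beta-\beta')}\iint_{\triangle}u^{\beta-1}v^{\beta'-1}(1-u-v)^{\gamma-\beta-\beta'-1}(1-xu-yv)^{-\alpha}\,du\,dv,
\end{align*}
valid when $\mathrm{Re}(\beta),\mathrm{Re}(\beta'),\mathrm{Re}(\gamma-\beta-\beta')>0$. Taking $\alpha=d_{m}+1$, $\beta=b$, $\beta'=c$, $\gamma=a+b+c$ (so $\gamma-\beta-\beta'=a>0$), together with $x=1-\mathbf y_{1}$ and $y=1-\mathbf y_{1}\mathbf y_{2}$, matches the integrand exactly; the $(\beta,x)\leftrightarrow(\beta',y)$ symmetry of $F_{1}$ then rearranges the arguments into the order displayed in \eqref{eq:hgeofun-form-two-var}. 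Combining the radial prefactor $\tfrac{1}{2}\Gamma(d_{m}+1)$ with the Beta-type constant $\Gamma(a)\Gamma(b)\Gamma(c)/\Gamma(a+b+c)$ recovers the stated coefficient $\Gamma(d_{m}+1)/[2\Gamma(a+b+c)]$, completing the proof.

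The only delicate steps, both essentially bookkeeping, are: (i) justifying the Fubini exchanges among the $\lambda$-, $r$-, and Mellin-variable integrals, handled exactly as in Lemma \ref{lem:hgeofun-contourint} using absolute integrability secured by the positivity and invertibility of $k$; and (ii) the algebraic reduction of $(k^{(0)})^{-1}k^{(j)}$ to $\mathbf y_{1}\cdots\mathbf y_{j}$ after contraction, where one must be careful that the factorization takes place \emph{before} the noncommutative variables $\mathbf y_{1},\mathbf y_{2}$ are plugged into $F_{1}$. The substantive content is the recognition of the Dirichlet-type integral as an $F_{1}$; once this is in hand, the proof is a direct computation.
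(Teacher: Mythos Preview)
Your argument is correct and follows essentially the same route as the paper: apply the three-factor contour identity \eqref{eq:hgeofun-lem-cintABC}, perform the radial integral via the Gamma function after factoring out $k^{(0)}$ and rewriting $k^{(j)}/k^{(0)}$ through \eqref{eq:hgeofun-kjandy_j}, and then recognize the remaining simplex integral as the Picard--Dirichlet representation of $F_{1}$. The paper carries out the same steps (citing the operator-substitution lemma of Lesch for the functional-calculus justifications you flag in (i)--(ii)); your explicit use of the $(\beta,x)\leftrightarrow(\beta',y)$ symmetry of $F_{1}$ to put the arguments in the stated order is a point the paper leaves implicit.
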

\begin{proof}
    We first perform a substation $r \mapsto r^2$, thus the volume form
    $r^{2 d_m+1}dr$ becomes $r^{d_m}dr/2$, in which
    the overall factor $1/2$ will be omitted in the rest of the proof.
    The contour integral  can be computed
    according to    
    lemma \ref{lem:hgeofun-contourint}, in which 
     $A = k^{(0)} r$,  $B = k^{(0)} r$ and $C = k^{(2)} r$ are bounded
     operators with positive spectrum.  A 
     Fubini type theorem cf. \cite[Lemma 2.1]{leschdivideddifference} has been
     applied to makes sense of  such functional calculus. So far, we have
     \begin{align*}
   &\,\,      \frac{1}{2\pi i} \int_C e^{-\lambda}
  (k^{(0)} r^2 - \lambda)^{-a}   (k^{(1)} r^2 - \lambda)^{-b}
  (k^{(2)} r^2 - \lambda)^{-c}
 d\lambda \\
 =&\,\, \gamma(a,b,c)   \int_0^1 \int_0^{1-t} (1 - t -u)^{a-1}
 t^{b-1} u^{c-1} 
 e^{
 -  r[ k^{(0)} - t( k^{(0)} - k^{(1)}) - u ( k^{(0)} - k^{(1)})]
 }
 dudt \\ =
 &\,\,  \gamma(a,b,c)   \int_0^1 \int_0^{1-t} (1 - t -u)^{a-1}
 t^{b-1} u^{c-1} e^{
     -r[1-t(1- \mathbf y_1) -u(1- \mathbf y_1 \mathbf y_2)] 
 }
 dudt
     \end{align*}
     Next, we apply $\int_0^\infty(*) r^{d_m}dr$ to the result above and
     integrate in $r$ first:
     \begin{align*}
         & \,\,   \int_0^\infty r^{d_m} e^{
 -  r[ k^{(0)} - t( k^{(0)} - k^{(1)}) - u ( k^{(0)} - k^{(1)})]
 } dr \\
= &\,\,  (k^{(0)})^{-(d_m+1)}  
 \int_0^\infty  r^{d_m}e^{
     -r[1-t(1- \mathbf y_1) -u(1- \mathbf y_1 \mathbf y_2)] 
 } dr \\
 =&\,\, 
 (k^{(0)})^{-(d_m+1)} 
 \brac{1-t(1- \mathbf y_1) -u(1- \mathbf y_1 \mathbf y_2)}^{-(d_m+1)}
 \brac{\int_0^\infty r^{d_m} e^{-r} dr}
 \\
 =&\,\, 
 \Gamma(d_m +1)
 (k^{(0)})^{-(d_m+1)} 
 \brac{1-t(1- \mathbf y_1) -u(1- \mathbf y_1 \mathbf y_2)}^{-(d_m+1)}.
\end{align*}
An operator substitution lemma \cite[Thm
2.2]{leschdivideddifference} 
has been used  twice.
 Roughly speaking, the lemma allows us to treat
 mutually commuting positive operators $k^{(j)}$ with $j=1,2,3$, and $1-t(1-
\mathbf y_1) -u(1- \mathbf y_1 \mathbf y_2)$ as positive real numbers during
the computation.  
To get the first equal sign, we set 
$r \mapsto r k^{(0)}$, and use the relations (see Eq. \eqref{eq:hgeofun-kjandy_j}): 
\begin{align*}
    k^{(1)} = k^{(0)} \mathbf y_1, \,\,\,
    k^{(2)} = k^{(0)} \mathbf y_1 \mathbf y_2.
\end{align*}
For the second equal sign, we let $r \mapsto  r[1-t(1- \mathbf y_1) -u(1-
\mathbf y_1 \mathbf y_2)]$, which is a positive operator for $r>0$ and $0< t,u
<1$.

%Let us continue the computation:
After combining the previous two steps, we can finish the proof by observing that  the
integral in $t$ and $u$ matches the integral representation of Appell's
hypergeometric functions, see  Eq.  \eqref{eq:hygeofun-APF1-defn}  
\begin{align*}
    &\,\,  \int_0^\infty
 \frac{1}{2\pi i} \int_C e^{-\lambda}
  (k^{(0)} r^2 - \lambda)^{-a}   (k^{(1)} r^2 - \lambda)^{-b}
  (k^{(2)} r^2 - \lambda)^{-c}
 d\lambda    
 (r^{d_m}dr) \\ 
 = & \,\,
\Gamma(d_m +1)
 (k^{(0)})^{-(d_m+1)} \gamma(a,b,c)\\
 &  \int_0^1 \int_0^{1-t} (1 - t -u)^{a-1}
 t^{b-1} u^{c-1} 
 \brac{1-t(1- \mathbf y_1) -u(1- \mathbf y_1 \mathbf y_2)}^{-(d_m+1)}
 du dt \\ =
 & \,\, 
 (k^{(0)})^{-(d_m+1)}
 \frac{\Gamma(d_m+1)}{\Gamma(a + b + c)}
 F_1(d_m+1;c,b;a+b+c; 1-\mathbf y_1\mathbf y_2, 1-\mathbf y_1 )      
\end{align*}
\end{proof}
  
\subsection{Multivariable generalization}
In general, the role of the Appell's $F_1$ in Proposition
\ref{prop:hgeofun-two-var-family} becomes Lauricella Functions of type
$F^{(n)}_D$.  We
start with the series version:
\begin{align}
    \label{eq:hgeofun-LauricellaSeries}
     F^{(n)}_D (a;\alpha_1,\dots,\alpha_n;c;x_1,\dots,x_n) 
    =  \sum_{\beta_1,\dots,\beta_n\ge 0} \frac{
        (a)_{\beta_1+\cdots\beta_n} (\alpha_1)_{\beta_1}
        (\alpha_n)_{\beta_n} x_1^{\beta_1}\cdots x_n^{\beta_n}
    }{ 
(c)_{\beta_1+\cdots\beta_n}
     \beta_1!\cdots \beta_n!
 },
\end{align}
which has the following integral representation (cf. \cite{MR0346178}):
\begin{align}
    \label{eq:hgeofun-LauricellaFuntions}
    &\,\, \int_{R_{(u_1,\dots,u_n)}} (1-\sum_{j=1}^n
    u_j)^{c-\alpha_1-\cdots-\alpha_n-1}
        u_1^{\alpha_1-1}\cdots u_n^{\alpha_n-1} 
        %\\&
        \cdot (1-\sum_{j=1}^n x_j u_j)^{-a} du_1\cdots du_n
       \\ =&\,\,
        \frac{
            \Gamma(\alpha_1)\cdots
            \Gamma(\alpha_n)\Gamma(c-\alpha_1-\cdots\alpha_n)
    }{\Gamma(c)}
    F^{(n)}_D (a;\alpha_1,\dots,\alpha_n;c;x_1,\dots,x_n). 
        \nonumber
\end{align}
The three variable case was first introduced by Lauricella in
\cite{Lauricella1893} and  more
fully by Appell and Kamp\'e de F\'eriet \cite{appell1926fonctions}.

% All discussions in previous sections have straightforward generalization in the
% multivariable setting.
 We will only repeat the major steps in the previous calculation and left
 details to
 interested readers.
Let $(A_0, \dots, A_n) \in (0,\infty)^{n+1}$. Define 
\begin{align}
    \label{eq:hggeo-Ga0toan}
 G_{\alpha_0,\dots,\alpha_n} (A_0,\dots,A_n) 
 = (2\pi)^{-1}
\int_{-\infty}^\infty e^{-i x}
(A_0 -ix)^{-\alpha_0} \cdots (A_n -ix)^{-\alpha_n} dx.
\end{align}
Similar to Lemma \ref{lem:hgeofun-contourint}, we have
\begin{align}
    \label{eq:hggeo-G-asinlemma**}
    &\,\,   G_{\alpha_0,\dots,\alpha_n} (A_0,\dots,A_n)  \\
    =&\,\, \gamma(\alpha_0,\dots,\alpha_n) \int_{R_{(u_1,\dots,u_n)}}
    (1-\sum_{j=1}^n u_j)^{\alpha_0-1} u_1^{\alpha_1-1}\dots u_n^{\alpha_n-1}
    \nonumber \\ &
    \cdot \exp\brac{
        -A_0(1-\sum_{j=1}^n u_j) - \sum_{j=1}^0 A_j u_j
    } du_1\cdots du_n,
    \nonumber
\end{align}
where $\gamma(\alpha_0,\dots,a_n)
= (\Gamma(\alpha_0)\cdots\Gamma(\alpha_n))^{-1}$ and the domain of integration 
a standard  $n$-simplex:
\begin{align*}
    R_{(u_1,\dots,u_n)} = \set{
        u_1\ge 0, \dots, u_n\ge 0, 1-\sum_{j=1}^n u_j\ge 0
    }.
\end{align*}
Next, we  set $A_j$ to be positive operators: $A_j = k^{(j)} r$ for
$j=0,1,\dots,n$ and $r\ge 0$, where $k^{(j)}$ are the Weyl factors in
\eqref{eq:hgeofun-k^j}. Again, by the operator substitution lemma (\cite[Lemma
2.1]{leschdivideddifference}): 
\begin{align*}
  &\,\, \int_0^\infty \exp\brac{
      - rk^{(0)} (1-\sum_{j=1}^n u_j) - \sum_{j=1}^0 r k^{(j)} u_j
    }  (r^d dr)  \\
    = &\,\,
    \Gamma(d+1) (k^{(0)})^{-(d+1)}  \brac{
        1 - \sum_{j=1}^m \mathbf z_j u_j  
    %- rk^{(0)} (1-\sum_{j=1}^n u_j) - \sum_{j=1}^0 r k^{(j)} u_j
    }^{-(d+1)}  ,
\end{align*}
where $\mathbf z_j = 1-\mathbf y_1 \cdots \mathbf y_j$. Notice that we have
used the relations \eqref{eq:hgeofun-kjandy_j}. The final goal is the following
integral:
\begin{align*}
    &\,\, \int_0^\infty G_{\alpha_0,\dots,\alpha_n}(k^{(0)}r,\dots,k^{(n)}r) (r^d dr)
    \\ =&\,\,
    (k^{(0)})^{-(d+1)} \frac{\Gamma(d+1)}{\gamma(a_0,\dots,\alpha_n)}
        \int_{R_{(u_1,\dots,u_n)}} (1-\sum_{j=1}^n u_j)^{\alpha_0-1}
        u_1^{\alpha_1-1}\cdots u_n^{\alpha_n-1} \\
        &\cdot (1-\sum_{j=1}^n \mathbf z_j u_j)^{-(d+1)} du_1\cdots du_n. 
\end{align*}
The integral on the right hand side above is a $n$-variable Lauricella function
$F^{(n)}_D$ in \eqref{eq:hgeofun-LauricellaFuntions}.
Sum up, we
have obtained the following $n$-variable version for the spectral functions.
\begin{prop}
    \label{prop:hgfun-n-var}
    Keep notations.
    \begin{align}
    \label{eq:hgfun-n-var}
    &\,\, (k^{(0)})^{d+1}\int_0^\infty
    G_{\alpha_0,\dots,\alpha_n}(k^{(0)}r,\dots,k^{(n)}r) (r^d dr)
        \\ =&\,\,
        \frac{\Gamma(d+1)}{\Gamma(\alpha_0+\dots+\alpha_n)}  
        F^{(n)}_D(d+1;\alpha_1,\dots,\alpha_n;\alpha_0+\cdots+\alpha_n;\mathbf
        z_1,\dots,\mathbf z_n),
        \nonumber
    \end{align}
    where $\mathbf z_j = 1- \mathbf y_1\cdots\mathbf y_j$ for $j=1,2,\dots,n$.
\end{prop}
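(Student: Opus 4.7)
The plan is to mirror the two-variable computation carried out in Proposition \ref{prop:hgeofun-two-var-family} and to lift it to $n$ factors, since the only new ingredient is bookkeeping over a standard simplex of arbitrary dimension. First, I would establish an $n$-variable analog of Lemma \ref{lem:hgeofun-contourint}, namely the identity \eqref{eq:hggeo-G-asinlemma**} giving a simplex integral representation of $G_{\alpha_0,\dots,\alpha_n}(A_0,\dots,A_n)$. The recipe is the same as for three factors: write each $(A_j-ix)^{-\alpha_j}$ via its Mellin transform \eqref{eq:hgeofun-Mellin-Tran.}, interchange the order of integration (justified by absolute convergence when $\Re A_j>0$ and $\alpha_j>0$), and collapse the resulting one-dimensional $x$-integral against $(2\pi)^{-1}\int e^{ix(s_0+\cdots+s_n-1)}dx=\delta(s_0+\cdots+s_n-1)$. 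This eliminates $s_0=1-\sum_{j\ge 1}u_j$ and restricts the remaining $(s_1,\dots,s_n)=(u_1,\dots,u_n)$ to the standard simplex $R_{(u_1,\dots,u_n)}$.

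Next, I would specialize to $A_j = k^{(j)} r$ and invoke the operator substitution lemma \cite[Thm.~2.2]{leschdivideddifference} to treat the mutually commuting positive operators $k^{(j)}$ as scalars throughout the simplex integral. Using \eqref{eq:hgeofun-kjandy_j} to write $k^{(j)} = k^{(0)}\mathbf y_1\cdots\mathbf y_j$, the exponent
\begin{align*}
k^{(0)}\Bigl(1-\sum_{j=1}^n u_j\Bigr) + \sum_{j=1}^n k^{(j)} u_j
= k^{(0)}\Bigl(1 - \sum_{j=1}^n \mathbf z_j u_j\Bigr)
\end{align*}
factors cleanly with $\mathbf z_j = 1-\mathbf y_1\cdots\mathbf y_j$. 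Then the $r$-integration reduces, after the rescaling $r\mapsto r\, k^{(0)}$ followed by $r\mapsto r\,\bigl(1-\sum \mathbf z_j u_j\bigr)$, to $\int_0^\infty r^d e^{-r}\,dr = \Gamma(d+1)$, producing the prefactor $\Gamma(d+1)(k^{(0)})^{-(d+1)}\bigl(1-\sum \mathbf z_j u_j\bigr)^{-(d+1)}$ inside the simplex integral.

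Finally, I would match the resulting integral
\begin{align*}
\int_{R_{(u_1,\dots,u_n)}} \Bigl(1-\sum_{j=1}^n u_j\Bigr)^{\alpha_0-1}
u_1^{\alpha_1-1}\cdots u_n^{\alpha_n-1}\Bigl(1-\sum_{j=1}^n \mathbf z_j u_j\Bigr)^{-(d+1)} du_1\cdots du_n
\end{align*}
with the Euler-type integral representation \eqref{eq:hgeofun-LauricellaFuntions} of the Lauricella function $F_D^{(n)}$, with parameters $a=d+1$, $c=\alpha_0+\cdots+\alpha_n$, $\alpha_j$ as given, and arguments $x_j=\mathbf z_j$. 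The combinatorial Gamma factors $\Gamma(\alpha_1)\cdots\Gamma(\alpha_n)\Gamma(\alpha_0)$ from \eqref{eq:hgeofun-LauricellaFuntions} exactly cancel the $\gamma(\alpha_0,\dots,\alpha_n)$ that survives from the Mellin step, leaving the claimed normalization $\Gamma(d+1)/\Gamma(\alpha_0+\cdots+\alpha_n)$.

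The only genuine obstacle is verifying that the operator substitution lemma still applies when $n$ is arbitrary: one must check that $1-\sum_{j=1}^n \mathbf z_j u_j$ defines an invertible positive operator on the simplex, uniformly enough to license the rescaling $r\mapsto r(1-\sum \mathbf z_j u_j)$. This is where a uniform bound $\|\mathbf y_j\|, \|\mathbf y_j^{-1}\|\le C$ and the convex combination $\sum u_j \le 1$ enter; all other manipulations are formal consequences of the two-factor case and routine Fubini.
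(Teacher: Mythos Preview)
Your proposal is correct and follows precisely the route the paper takes: the paper sketches exactly these steps in the paragraphs preceding the proposition (derive the simplex representation \eqref{eq:hggeo-G-asinlemma**} via Mellin transforms and the delta function, specialize $A_j=k^{(j)}r$, factor out $k^{(0)}$ using \eqref{eq:hgeofun-kjandy_j}, perform the $r$-integral, and identify the remaining simplex integral with the Lauricella representation \eqref{eq:hgeofun-LauricellaFuntions}). Your write-up is in fact more explicit than the paper's, which explicitly says it will ``only repeat the major steps in the previous calculation and [leave] details to interested readers''; your added remark on the positivity of $1-\sum_j\mathbf z_j u_j$ on the simplex is a welcome clarification that the paper omits.
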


\subsection{Differential and contiguous functional relations}
Let us slightly change the notations in Proposition
\ref{prop:hgeofun-two-var-family}. Denote
\begin{align}
    \tilde K_{a,b}(u;m) = \frac{\Gamma(\tilde d_m)}{\Gamma(a+b)}
    \pFq21(\tilde d_m,b;a+b;u), 
\label{eq:hgeofun-defn-tildeKab}
\end{align}
where $\tilde d_m \defeq \tilde d_m(a,b)=a+b+m/2-2$. Similarly,
\begin{align}
    \tilde H_{a,b,c}(u,v;m) = \frac{\Gamma(\tilde d_m)}{\Gamma(a+b+c)}
    F_1(\tilde d_m;c,b;a+b+c;u,v)
\label{eq:hgeofun-defn-tildeHabc}
\end{align}
with $\tilde d_m \defeq \tilde d_m(a,b,c) = a+b+c+m/2-2$.
\begin{prop}
    For the one variable family defined in \eqref{eq:hgeofun-defn-tildeKab}, 
    the following functional relations hold:
    \begin{align}
        \tilde K_{a,b}(u;m+2) &= (\tilde d_m+ ud/du) 
        \tilde K_{a,b}(u;m) ,
        \label{eq:hggeofun-DR-tildeKanm+2}
        \\
        \tilde K_{a,b+1}(u;m) &= (b^{-1}d/du) \tilde K_{a,b}(u;m) ,
        \label{eq:hggeofun-DR-tildeKanb+1}
        \\
        \tilde K_{a,b+1}(u;m) &= (1+b^{-1}ud/du) \tilde K_{a+1,b}(u;m) 
    \label{eq:hggeofun-DR-tildeKanb+1-a+1}.
    \end{align}
    Moreover,
    \begin{align}
\label{eq:hggeofun-DR-tildeKan-GCF}
        \frac{ a+b}{ \tilde d_m} \frac{
            K_{a+1,b}(u;m)
        }{K_{a,b}(u;m)}  = 
        \frac{
            \pFq21(\tilde d_m+1,b;a+b+1;u)
        }{
        \pFq21(\tilde d_m,b;a+b;u)
    },
    \end{align}
    where the right hand side is a Gauss's continued fraction.
\end{prop}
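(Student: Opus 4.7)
The plan is to prove the four relations by direct manipulation of the defining series $\pFq21(\tilde d_m,b;a+b;u)=\sum_{n\ge 0} \frac{(\tilde d_m)_n(b)_n}{(a+b)_n n!}u^n$, absorbing the $\Gamma$-prefactors of $\tilde K_{a,b}$ carefully because both $\tilde d_m$ and the parameters shift simultaneously. The main combinatorial identity I will use repeatedly is the Pochhammer shift $(\alpha)_n(\alpha+n)=\alpha(\alpha+1)_n$, and its consequence that the Euler operator $ud/du$ acts on a series as multiplication by $n$ on the $n$-th coefficient.

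For \eqref{eq:hggeofun-DR-tildeKanm+2}, observe that replacing $m$ by $m+2$ sends $\tilde d_m\mapsto \tilde d_m+1$ while leaving $a+b$ unchanged, so $\Gamma(\tilde d_m+1)/\Gamma(\tilde d_m)=\tilde d_m$ and it suffices to show the contiguous relation
\[
(\tilde d_m+u\tfrac{d}{du})\pFq21(\tilde d_m,b;a+b;u)=\tilde d_m\,\pFq21(\tilde d_m+1,b;a+b;u).
\]
Applying $ud/du$ to the series turns each term's coefficient $(\tilde d_m)_n$ into $n(\tilde d_m)_n$, and adding $\tilde d_m$ produces $(\tilde d_m+n)(\tilde d_m)_n=\tilde d_m(\tilde d_m+1)_n$; this is exactly the claim. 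For \eqref{eq:hggeofun-DR-tildeKanb+1}, note that the shift $b\mapsto b+1$ simultaneously raises $\tilde d_m$ by $1$ and $a+b$ by $1$; hence applying the standard derivative rule $\tfrac{d}{du}\pFq21(\alpha,\beta;\gamma;u)=\tfrac{\alpha\beta}{\gamma}\pFq21(\alpha+1,\beta+1;\gamma+1;u)$ to $\tilde K_{a,b}$ produces the factor $\tilde d_m b/(a+b)$, which after dividing by $b$ combines with $\Gamma(\tilde d_m)/\Gamma(a+b)$ to give precisely $\Gamma(\tilde d_m+1)/\Gamma(a+b+1)$, i.e.\ $\tilde K_{a,b+1}(u;m)$.

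Relation \eqref{eq:hggeofun-DR-tildeKanb+1-a+1} is the subtlest of the three differential relations because the gamma prefactors of $\tilde K_{a+1,b}$ and $\tilde K_{a,b+1}$ coincide (both shift $a+b$ to $a+b+1$ and $\tilde d_m$ to $\tilde d_m+1$), so it reduces to the pure hypergeometric identity
\[
\pFq21(\tilde d_m+1,b+1;a+b+1;u)=(1+b^{-1}u\tfrac{d}{du})\pFq21(\tilde d_m+1,b;a+b+1;u).
\]
Applying $b^{-1}u d/du$ to the $n$-th coefficient multiplies $(b)_n$ by $n/b$, and adding back the original term yields the factor $(1+n/b)(b)_n=b^{-1}(b)_n(b+n)=b^{-1}\cdot b\,(b+1)_n=(b+1)_n$, giving the stated series for $\pFq21(\tilde d_m+1,b+1;a+b+1;u)$.

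Finally, \eqref{eq:hggeofun-DR-tildeKan-GCF} follows by inserting the definitions of $\tilde K_{a+1,b}$ and $\tilde K_{a,b}$ and simplifying the ratio of Gamma factors: $\Gamma(\tilde d_m+1)\Gamma(a+b)/[\Gamma(a+b+1)\Gamma(\tilde d_m)]=\tilde d_m/(a+b)$, and the prefactor $(a+b)/\tilde d_m$ on the left cancels this exactly, leaving the pure ratio of $\pFq21$'s on the right. The only substantive remark needed is the identification of this ratio as the convergent of Gauss's continued fraction, which is classical (see, e.g., the standard references on $\pFq21$). I expect the only minor bookkeeping difficulty to be keeping straight, in each relation, whether $\tilde d_m$ shifts along with $a$ or $b$ (since $\tilde d_m=a+b+m/2-2$ depends on both), but this is handled uniformly by writing out the $\Gamma$-prefactors before comparing series.
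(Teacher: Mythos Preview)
Your proof is correct and follows essentially the same approach as the paper: the paper simply cites the standard differential/contiguous relations for $\pFq21$ collected in its appendix (equations \eqref{eq:hgfun-diff-relations-GF21-abc+1}--\eqref{eq:hgfun-diff-relations-GF21-c-1}), while you re-derive precisely those relations inline via the Pochhammer identity $(\alpha+n)(\alpha)_n=\alpha(\alpha+1)_n$ and track the $\Gamma$-prefactors explicitly. Your bookkeeping of how $\tilde d_m$ shifts along with $a$ or $b$ is correct in each case, and your reading of \eqref{eq:hggeofun-DR-tildeKan-GCF} as a statement about $\tilde K$ (with argument $u$) is the intended one.
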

\begin{proof}
    Follows quickly from the differential relations
    \eqref{eq:hgfun-diff-relations-GF21-abc+1} to
    \eqref{eq:hgfun-diff-relations-GF21-c-1}.
\end{proof}

\begin{prop}
    For the two variable family defined in \eqref{eq:hgeofun-defn-tildeHabc},
    \begin{align}
    H_{a,b,c}(u,v;m+2) &= (\tilde d_m + u\partial_u +v\partial_v)
    H_{a,b,c}(u,v;m),
        \label{eq:hpgeofun-DR-Habcm+2} \\
        H_{a,b+1,c}(u,v;m) &= b^{-1} \partial_v H_{a,b,c}(u,v;m),    
        \label{eq:hpgeofun-DR-Habc-b+1} \\
        H_{a,b,c+1}(u,v;m) &= c^{-1} \partial_u H_{a,b,c}(u,v;m) .   
        \label{eq:hpgeofun-DR-Habc-c+1} 
    \end{align}
    When increasing the parameter $a$ by one, we encounter a similar ration as in
\eqref{eq:hggeofun-DR-tildeKan-GCF}:
    \begin{align*}
        \frac{
        H_{a+1,b,c}(u,v;m)
    }{ H_{a,b,c}(u,v;m) } = 
    \frac{\tilde d_m}{ a+b+c}
    \frac{
        F_1(\tilde d_m+1;c,b;a+b+c+1;u,v)
    }{
        F_1(\tilde d_m;c,b;a+b+c;u,v) 
    } .
    \end{align*}
\end{prop}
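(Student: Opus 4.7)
The plan is to imitate exactly the proof of the one-variable proposition (which presumably invoked the standard differential relations for $\pFq21$), but now using the corresponding differential and contiguous relations for Appell's $F_1$. Since $\tilde H_{a,b,c}(u,v;m)$ is, up to a $\Gamma$-ratio, just $F_1(\tilde d_m;c,b;a+b+c;u,v)$, everything will reduce to keeping careful track of how $\tilde d_m(a,b,c)=a+b+c+m/2-2$ and the ``denominator'' parameter $a+b+c$ shift when one of $a,b,c,m$ is bumped.

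The two facts I will need about $F_1$ are the partial-derivative identities
\begin{align*}
\partial_v F_1(\alpha;c,b;\gamma;u,v) &= \tfrac{\alpha b}{\gamma}\,F_1(\alpha+1;c,b+1;\gamma+1;u,v), \\
\partial_u F_1(\alpha;c,b;\gamma;u,v) &= \tfrac{\alpha c}{\gamma}\,F_1(\alpha+1;c+1,b;\gamma+1;u,v),
\end{align*}
together with the Euler-operator identity
\begin{align*}
(\alpha+u\partial_u+v\partial_v)\,F_1(\alpha;c,b;\gamma;u,v)=\alpha\,F_1(\alpha+1;c,b;\gamma;u,v),
\end{align*}
all of which follow termwise from the series expansion \eqref{eq:hgeofun-LauricellaSeries} specialised to $n=2$, via the Pochhammer identity $(\alpha+k)(\alpha)_k=(\alpha)_{k+1}$. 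These are the $F_1$-analogues of \eqref{eq:hgfun-diff-relations-GF21-abc+1}--\eqref{eq:hgfun-diff-relations-GF21-c-1} that were used for $\pFq21$.

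With those in hand, each of \eqref{eq:hpgeofun-DR-Habcm+2}--\eqref{eq:hpgeofun-DR-Habc-c+1} is obtained by inserting the relevant identity into the definition \eqref{eq:hgeofun-defn-tildeHabc} and using $\Gamma(x+1)=x\,\Gamma(x)$ to reconcile the prefactors. For instance, to get \eqref{eq:hpgeofun-DR-Habc-b+1}: bumping $b\mapsto b+1$ sends $\tilde d_m\to \tilde d_m+1$ and $a+b+c\to a+b+c+1$, so the right-hand side of \eqref{eq:hgeofun-defn-tildeHabc} becomes $\frac{\Gamma(\tilde d_m+1)}{\Gamma(a+b+c+1)}F_1(\tilde d_m+1;c,b+1;a+b+c+1;u,v)$, and the $\partial_v$-identity above shows this equals exactly $b^{-1}\partial_v\tilde H_{a,b,c}(u,v;m)$. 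Relation \eqref{eq:hpgeofun-DR-Habc-c+1} is identical up to swapping the roles of $u,v$ and $b,c$. For \eqref{eq:hpgeofun-DR-Habcm+2}, the map $m\mapsto m+2$ sends $\tilde d_m\to\tilde d_m+1$ without touching $a+b+c$, so that the Euler-operator identity gives the result after dividing by $\tilde d_m$ and rewriting $\Gamma(\tilde d_m+1)/\tilde d_m=\Gamma(\tilde d_m)$.

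The contiguous ratio formula for $a\mapsto a+1$ is then almost immediate: shifting $a$ sends $\tilde d_m\to\tilde d_m+1$ and $a+b+c\to a+b+c+1$ while leaving $b,c$ alone, so the ratio $\tilde H_{a+1,b,c}/\tilde H_{a,b,c}$ factors into the gamma prefactor $\Gamma(\tilde d_m+1)\Gamma(a+b+c)/(\Gamma(\tilde d_m)\Gamma(a+b+c+1))=\tilde d_m/(a+b+c)$ times the $F_1$-ratio, which is exactly what is asserted. There is no real obstacle here; the only mild pitfall is bookkeeping --- making sure that each of the four possible parameter bumps ($a$, $b$, $c$, or $m$) is correctly routed into the correct $F_1$ identity, since only $b,c$-bumps show up as partial derivatives whereas the $m$-bump and the $a$-bump both act on $\alpha=\tilde d_m$ and must be disentangled using, respectively, the Euler identity and the definition directly.
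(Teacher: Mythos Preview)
Your proof is correct and follows exactly the approach the paper intends: the paper's own proof simply says ``Follows quickly from the differential relations \eqref{eq:hgeofun-f1abc+-diffsys} to \eqref{eq:hgeofun-f1c+-diffsys},'' and you have carried out precisely those verifications with the bookkeeping made explicit. The only tiny slip is expository --- in the $m\mapsto m+2$ step there is no actual ``dividing by $\tilde d_m$''; the factor $\tilde d_m$ coming from the Euler-operator identity is absorbed directly via $\tilde d_m\Gamma(\tilde d_m)=\Gamma(\tilde d_m+1)$ --- but the computation you wrote is right.
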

\begin{proof}
    Follows quickly from the differential relations
\eqref{eq:hgeofun-f1abc+-diffsys} to
\eqref{eq:hgeofun-f1c+-diffsys}.
\end{proof}

\begin{prop}
        \label{prop:hgfun-relations-K-H} 
    Let $m=\dim M \ge 2$ and $a,b,c$ be positive integers,
    \begin{align}
        \tilde K_{a,b}(z;m+2) &= a \tilde K_{a+1,b}(z;m)
        + b\tilde K_{a,b+1}(z;m)
        \label{eq:hgfun-relations-K-m+2-a+1-b+1} \\
        \begin{split}
         \tilde H_{a,b,c}(u,v;m+2) &= a\tilde H_{a+1,b,c}(u,v;m)
 +b\tilde H_{a,b+1,c}(u,v;m) \\ 
 &+  c\tilde H_{a,b,c+1}(u,v;m).
   \end{split}
                \label{eq:hgfun-relations-H-m+2-a+1-b+1} 
    \end{align}
\end{prop}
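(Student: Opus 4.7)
My plan is to prove both identities in parallel by passing to the Euler-type integral representations that already underlie Propositions \ref{prop:hgeofun-one-var-family} and \ref{prop:hgeofun-two-var-family}, and then reducing each identity to the trivial decomposition of unity inside the integrand. The key observation driving everything is that incrementing $m$ by two shifts the $\tilde d_m$ parameter by $+1$, and so do the shifts $a \mapsto a+1$, $b \mapsto b+1$, $c \mapsto c+1$; concretely, $\tilde d_m(a+1,b) = \tilde d_m(a,b+1) = \tilde d_{m+2}(a,b)$ and likewise in the three-variable case. Consequently all three (resp.\ four) terms appearing in the claimed identity can be written as integrals with the \emph{same} exponent $-(\tilde d_m+1)$ on the Cauchy kernel.

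For the one-variable identity, I would apply Euler's representation of $\pFq21$ to rewrite
\begin{equation*}
\tilde K_{a,b}(u;m) = \frac{\Gamma(\tilde d_m)}{\Gamma(a)\Gamma(b)} \int_0^1 t^{b-1}(1-t)^{a-1}(1-ut)^{-\tilde d_m}\,dt.
\end{equation*}
Replacing $m$ by $m+2$ bumps $\tilde d_m$ to $\tilde d_m+1$ in both the Gamma prefactor and the exponent. Doing the analogous substitution in $a\tilde K_{a+1,b}(u;m)$ (which shifts $(1-t)^{a-1}$ to $(1-t)^{a}$ and cancels the extra $a$ from $\Gamma(a+1)=a\Gamma(a)$) and in $b\tilde K_{a,b+1}(u;m)$ (which shifts $t^{b-1}$ to $t^{b}$), the sum becomes
\begin{equation*}
\frac{\Gamma(\tilde d_m+1)}{\Gamma(a)\Gamma(b)} \int_0^1 t^{b-1}(1-t)^{a-1}(1-ut)^{-(\tilde d_m+1)}\bigl[(1-t)+t\bigr]dt,
\end{equation*}
and since $(1-t)+t=1$ this is exactly the integral for $\tilde K_{a,b}(u;m+2)$.

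The three-variable identity is proved by the same device, now with the Euler integral for $F_1$ coming from the $n=2$ case of \eqref{eq:hgeofun-LauricellaFuntions}:
\begin{equation*}
\tilde H_{a,b,c}(u,v;m) = \frac{\Gamma(\tilde d_m)}{\Gamma(a)\Gamma(b)\Gamma(c)} \iint_{R} (1-s-t)^{a-1} s^{c-1} t^{b-1}(1-us-vt)^{-\tilde d_m}\,ds\,dt,
\end{equation*}
where $R=\{s,t\ge 0, s+t\le 1\}$. Rewriting each of $a\tilde H_{a+1,b,c}$, $b\tilde H_{a,b+1,c}$, $c\tilde H_{a,b,c+1}$ via the same representation and summing pulls out a common factor $\Gamma(\tilde d_m+1)/[\Gamma(a)\Gamma(b)\Gamma(c)]$ and leaves the bracket $(1-s-t)+t+s=1$ inside the integrand, producing $\tilde H_{a,b,c}(u,v;m+2)$.

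No step should be genuinely difficult; the only thing to be careful about is bookkeeping the Gamma factors so that the cancellations $a\Gamma(a)=\Gamma(a+1)$, $b\Gamma(b)=\Gamma(b+1)$, $c\Gamma(c)=\Gamma(c+1)$ line up exactly. I would note in passing that this gives an alternative, purely algebraic derivation that is independent of the differential relations \eqref{eq:hggeofun-DR-tildeKanm+2}--\eqref{eq:hggeofun-DR-tildeKanb+1-a+1} and \eqref{eq:hpgeofun-DR-Habcm+2}--\eqref{eq:hpgeofun-DR-Habc-c+1}; one could alternatively attempt to deduce the identities directly from those relations, but doing so requires combining (2.30)--(2.32) together with an extra contiguous relation that shifts $a$, which is exactly what the integrand-splitting accomplishes in one line.
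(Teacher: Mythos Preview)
Your argument is correct: the Euler integral representations combined with the partition-of-unity identities $(1-t)+t=1$ and $(1-s-t)+s+t=1$ give the identities in one line each, and your bookkeeping of the $\Gamma$-factors and of the common exponent $\tilde d_m+1$ is accurate.

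The paper takes a different route. It first rewrites \eqref{eq:hgfun-relations-K-m+2-a+1-b+1} and \eqref{eq:hgfun-relations-H-m+2-a+1-b+1} as the contiguous relations \eqref{eq:hgeofun-evenm-tildeKab-inductive-step-v2} and \eqref{eq:hgeofun-evenm-tildeHabc-inductive-step-v2}, and then derives the latter by computing $(u\partial_u+v\partial_v)F$ in two ways from the differential relations \eqref{eq:hgeofun-f1b+-diffsys}--\eqref{eq:hgeofun-f1c+-diffsys}: once as $(\gamma-1)(F(\gamma-)-F)$ and once as $\beta(F(\beta+)-F)+\beta'(F(\beta'+)-F)$. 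Equating these gives precisely the needed contiguous identity. Your approach is more self-contained and avoids the differential machinery altogether, while the paper's argument ties the proposition into the network of differential/contiguous relations built up in \S2.4 and the appendix; in particular, the paper's method makes explicit that \eqref{eq:hgfun-relations-H-m+2-a+1-b+1} is nothing but a specialization of a standard contiguous relation for $F_1$, which is useful context elsewhere in the paper. Either argument is perfectly adequate for the proposition itself.
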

\begin{proof}
    %From the definitions of $\tilde K_{a,b}$ and $\tilde H_{a,b,c}$,
    Notice that
    \eqref{eq:hgfun-relations-K-m+2-a+1-b+1} and
    \eqref{eq:hgfun-relations-H-m+2-a+1-b+1} are equivalent to the following
    contiguous relations of hypergeometric functions respectively:
  \begin{align}
\begin{split}
       &\,\,  (a+b) \pFq21(\tilde d_m+1,b;a+b;u)\\
        =&\,\,
        a \pFq21(\tilde d_m+1,b;a+b+1;u) +
        b \pFq21(\tilde d_m+1,b+1;a+b+1;u),           
        \end{split}
        \label{eq:hgeofun-evenm-tildeKab-inductive-step-v2}
    \end{align}
and
    \begin{align}
        \label{eq:hgeofun-evenm-tildeHabc-inductive-step-v2}
        \begin{split}
            &\,\, (a+b+c) F_1(\tilde d_m +1;c,b;a+b+c;u,v) 
        \\ =&\,\, 
        a F_1(\tilde d_m +1;c,b;a+b+c+1;u,v) \\
        +& \,\, b F_1(\tilde d_m +1;c,b+1;a+b+c+1;u,v) \\
        +& \,\,
        c F_1(\tilde d_m +1;c+1,b;a+b+c+1;u,v).
        \end{split}
    \end{align}
    We prove \eqref{eq:hgeofun-evenm-tildeHabc-inductive-step-v2} as an example
    and left \eqref{eq:hgeofun-evenm-tildeKab-inductive-step-v2} to interested
    readers.
Indeed,  let $F = F_1(\alpha
    ;\beta,\beta';\gamma;u,v)$, from   \eqref{eq:hgeofun-f1b+-diffsys},
    \eqref{eq:hgeofun-f1b'+-diffsys} and
    \eqref{eq:hgeofun-f1c+-diffsys}, we can solve for
    $(u\partial_u+v\partial_v)F$ in two different ways:
 \begin{align*}
     (u\partial_u+v\partial_v)F &= (\gamma-1) (F(\gamma-) - F) \\
&= 
\beta(F(\beta+)-F) + \beta'(F(\beta'+)-F),
    \end{align*}
    where $F(\beta+)$, $F(\beta'+)$ and $F(\gamma-)$ stand for rising or
    lowering the indicated parameter by one. 
     Two sides of  \eqref{eq:hgeofun-evenm-tildeHabc-inductive-step-v2} appear
     as the two lines on the right hand side above, with $\alpha = \tilde
     d_m+1$, $\beta = c$, $\beta'=b$ and $\gamma = a+b+c+1$. 
\end{proof}

% Numerous similar relations can be produced 
% The list of the functional relations in this section is far away from complete.
% An important question is that how 
% Function relations mentioned above are far from 

\subsection{Comparison to previous results}
The main goal of this section is to confirm that  the family $H_{a,b,c}(u,v;m)$
defined in Proposition \ref{prop:hgeofun-two-var-family} does agree with those
cases that have been computed in previous works. First of all, asking a computer
algebra system to perform symbolic integration for the double integral
\eqref{eq:hgeofun-twovar-H_abcm} is tremendously  inefficient. Thanks to
Proposition \ref{prop:hpgeofun-F2toGF21-thm}, all the $F_1$ functions that come
from $H_{a,b,1}(u,v;m)$ can be reduced to hypergeometric functions $\pFq21$. In
particular, by taking $p=0$ and $q = 1$ in Proposition
\ref{prop:hpgeofun-F2toGF21-thm}, we see that $F_1(a,1,1,b,x,y)$ is a divided difference of $\pFq21$:
\begin{prop}
    \label{prop:hgeofun-cor-F1-dividedidff}
    For $a\in \mathbb{C}$ and $b \in \mathbb{C} \setminus \Z_{\le 0}$, we have
    the following divided difference relation:
    \begin{align}
        \label{eq:hgeofun-cor-F1-dividedidff}
        \begin{split}
        F_1(a;1,1;b;x,y) &= 
        \frac{x  \pFq21 (a,1;b;x)-y \pFq21 (a,1;b;y)}{x-y}
        \\
        &= [x,y](z \pFq21 (a,1;b;z)).    
        \end{split}
    \end{align}
    and
    \begin{align}
\label{eq:hgeofun-cor-F1-for-c=2}
        F_1(a;1,2;b;x,y) &= 
        b^{-1} (x-y)^{-2} \left[ 
        b x^2  \pFq21(a,1;b;x) +b y^2  \pFq21(a,2;b;y) 
    \right.\\& \left.
        + x \left(-a y^2  \pFq21(a+1,2;b+1;y)-2
        b y  \pFq21(a,1;b;y)\right)
        \right]
        \nonumber
%         \Scale[.9]{
%         \frac{
%         b x^2  \pFq21(a,1,b,x)+y (-2 b x  \pFq21(a,1,b,y)-a
%         x y  \pFq21(a+1,2,b+1,y)+b y  \pFq21(a,2,b,y))
%         }{
%             b (x-y)^2
%     } }
    \end{align}
\end{prop}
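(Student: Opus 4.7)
The plan is to derive both identities as specializations of Proposition \ref{prop:hpgeofun-F2toGF21-thm}, with the second one obtained from the first by an explicit differentiation in $y$.

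\textbf{First identity.} I would compare power series on both sides. Using the Lauricella series \eqref{eq:hgeofun-LauricellaSeries} with $n=2$ and $\alpha_1=\alpha_2=1$, and noting that $(1)_j/j!=1$, we obtain
\begin{equation*}
F_1(a;1,1;b;x,y) = \sum_{i,j\ge 0} \frac{(a)_{i+j}}{(b)_{i+j}}\, x^i y^j.
\end{equation*}
On the other hand, writing $g(z) = z\,\pFq21(a,1;b;z) = \sum_{m\ge 1} \frac{(a)_{m-1}}{(b)_{m-1}} z^m$ and using the elementary divided-difference formula $[x,y](z^m) = \sum_{i+j=m-1} x^i y^j$, the right-hand side of \eqref{eq:hgeofun-cor-F1-dividedidff} expands as $\sum_{i,j\ge 0} \frac{(a)_{i+j}}{(b)_{i+j}} x^i y^j$, matching the series above. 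This is precisely the $p=0,q=1$ specialization of Proposition \ref{prop:hpgeofun-F2toGF21-thm}.

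\textbf{Second identity.} The starting observation is that
\begin{equation*}
F_1(a;1,2;b;x,y) = (1 + y\partial_y)\, F_1(a;1,1;b;x,y),
\end{equation*}
which follows termwise from $(2)_j/j! = j+1$ applied to the Lauricella series. I would then substitute the divided-difference formula from the first part and compute $(1+y\partial_y)\bigl[(xf(x)-yf(y))/(x-y)\bigr]$ explicitly, where $f(z) = \pFq21(a,1;b;z)$. The derivative identity $\partial_y \pFq21(a,1;b;y) = (a/b)\,\pFq21(a+1,2;b+1;y)$ will produce the $\pFq21(a+1,2;b+1;y)$ term in \eqref{eq:hgeofun-cor-F1-for-c=2}, while collecting the remaining contributions over the common denominator $(x-y)^2$ yields the linear combination of $\pFq21(a,1;b;x)$, $\pFq21(a,1;b;y)$, and $\pFq21(a,2;b;y)$ appearing on the right-hand side.

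\textbf{Main obstacle.} The only nontrivial step is the bookkeeping in the differentiation: after applying $(1+y\partial_y)$ to the divided difference and clearing denominators, one must repackage the various $y$-derivative contributions into exactly the $\pFq21$-combination written in \eqref{eq:hgeofun-cor-F1-for-c=2}. This requires the right choice of contiguous relation (to rewrite the coefficient of $y^2$ as $\pFq21(a,2;b;y)$ rather than in terms of $\pFq21(a,1;b;y)$ and its derivative), and is the step most susceptible to algebraic errors. A cleaner and conceptually equivalent route is to invoke Proposition \ref{prop:hpgeofun-F2toGF21-thm} directly with parameters chosen to produce $F_1(a;1,2;b;x,y)$ on the left-hand side, and then use the standard derivative and contiguous relations for $\pFq21$ to bring the output into the stated form.
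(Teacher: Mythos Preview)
Your argument is correct, and in fact more self-contained than what the paper does. The paper's route is to invoke Proposition~\ref{prop:hpgeofun-F2toGF21-thm} (a quoted result from \cite{MR2107356} on reducing $F_2$ to $\pFq21$) together with the $F_1\!\to\!F_2$ reduction \eqref{eq:hgeofun-APF2toAPF1}; with $p=0,q=1$ this yields the first identity, and the second would come from $p=1,q=2$. You instead prove \eqref{eq:hgeofun-cor-F1-dividedidff} directly by matching the double series $\sum_{i,j}\frac{(a)_{i+j}}{(b)_{i+j}}x^i y^j$ against the divided difference, and then obtain \eqref{eq:hgeofun-cor-F1-for-c=2} by applying $(1+y\partial_y)$, using $f'(y)=\tfrac{a}{b}\,\pFq21(a+1,2;b+1;y)$ and the contiguous relation $\pFq21(a,2;b;y)=f(y)+yf'(y)$ to repackage the numerator. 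Both identifications check out; your approach is more elementary and avoids the detour through $F_2$, while the paper's approach has the advantage of plugging into a single external theorem that handles all $F_1(a;1,b';b;x,y)$ with $b'\in\Z_{\ge1}$ uniformly.

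One small correction of phrasing: when you write ``this is precisely the $p=0,q=1$ specialization of Proposition~\ref{prop:hpgeofun-F2toGF21-thm}'', note that that proposition concerns $F_2$, not $F_1$; the connection to $F_1(a;1,1;b;x,y)$ goes through the reduction formula \eqref{eq:hgeofun-APF2toAPF1}, which you did not mention. This does not affect your proof, since your power-series argument is independent of that proposition, but the sentence as written is slightly misleading.
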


In dimension $m=\dim M =2$, we can recover the explicit functions listed at the
very end of \cite{MR3194491}. According to  \eqref{eq:hgeofun-cor-F1-dividedidff},
% Let $s = y_1$ and $t=y_1 y_2$, where $y_1, y_2$
% are the variables used in Proposition \ref{prop:hpgeofun-F2toGF21-thm}. Then
\begin{align*}
 \tilde   H_{a,1,1}(s,t;2) &= \frac{\Gamma (a+1)
     F_1(a+1;1,1;a+2;1-t,1-s)}{\Gamma (a+2)}
\\
&=
\Scale[1.2]{
\frac{\Gamma (a+1) ((s-1)  \pFq21(a+1,1;a+2;1-s)-(t-1) 
 \pFq21(a+1,1;a+2;1-t))}{\Gamma (a+2) (s-t)}
 }.
\end{align*}
For example,
\begin{align*}
    \tilde H_{2,1,1}(s,t;2) &= 
    \frac{(t-1)^2 \log (s)+(s-1) ((t-1) (s-t)-(s-1) \log (t))}{(s-1)^2 (t-1)^2
    (s-t)}, \\
    \tilde H_{3,1,1}(s,t;2) &= \Scale[1.2]{
    \frac{(s-1) (t-1) (s-t) ((s-3) t-3 s+5)+2 (s-1)^3 \log (t)-2 (t-1)^3 \log
    (s)}{2 (s-1)^3 (t-1)^3 (s-t)}
}.
  \end{align*}
  Those functions was computed again in 
\cite[Sec. 5.2, 5.3]{leschdivideddifference} by applying   divided difference
repeatedly on to the  modified logarithm function $\mathcal L_0$ in
\cite{MR2907006}, which, in particular,  is a  hypergeometric function: 
\begin{align*}
    \pFq21(1,1;2;1-z) = \frac{\ln z}{z-1}.
\end{align*}
One also observes that, in \cite[Sec. 5.2, 5.3]{leschdivideddifference},
characteristic differential operators (as in
 \eqref{eq:hgeofun-f1a+-diffsys} to \eqref{eq:hgeofun-f1c+-diffsys}) were used
 in the computation. Such similarity will be investigate in future papers.
% \eqref{eq:hgfun-diff-relations-GF21-a+1} and
% \eqref{eq:hgfun-diff-relations-GF21-b+1}

In dimension $m=3$, some explicit functions listed  in   in \cite[Theorem
7.1]{2016arXiv161004740K}. The results are compatible,  we give a few
  examples:
  \begin{align*}
      \tilde K_{2,1}(1-z;3) &=
      \frac{3}{8} \sqrt{\pi } \pFq21 \left(\frac{5}{2},1;3;1-z\right)
      =
      \frac{\sqrt{\pi } \left(\sqrt{z}+2\right)}{2 \left(\sqrt{z}+1\right)^2
      \sqrt{z}} ,\\
\tilde K_{2,1}(1-z;3)  &= \frac{5 \sqrt{\pi }}{16}
\pFq21(7/2,1;4;1-z) =
\frac{\sqrt{\pi } \left(3 z+9 \sqrt{z}+8\right)}{8 \left(\sqrt{z}+1\right)^3
\sqrt{z}},
  \end{align*}
and
\begin{align*}
    \tilde H_{2,1,1}(1-x,1-y;3)& = \frac{5 \sqrt{\pi }}{16}
    F_1(\frac{7}{2};1,1;4;1-y,1-x)\\
    &=\Scale[1.1]{
    \frac{\sqrt{\pi } \left(x \sqrt{y}+\sqrt{x} y+4 \sqrt{x} \sqrt{y}+2 x+4
    \sqrt{x}+2 y+4 \sqrt{y}+2\right)}{2 \left(\sqrt{x}+1\right)^2 \sqrt{x}
    \left(\sqrt{y}+1\right)^2 \sqrt{y} \left(\sqrt{x}+\sqrt{y}\right)}
},
\\
\tilde H_{1,1,1}(1-x,1-y;3)& =   \frac{5 \sqrt{\pi }}{16}
F_1(\frac{5}{2};1,1;3;1-y,1-x) \\
&= 
\frac{\sqrt{\pi } \left(\sqrt{x}+\sqrt{y}+1\right)}{\left(\sqrt{x}+1\right)
\sqrt{x} \left(\sqrt{y}+1\right) \sqrt{y} \left(\sqrt{x}+\sqrt{y}\right)}.
\end{align*}
It has been shown in the author's previous work \cite[Eq. (3.9)] {LIU2017138}
that for dimension $m = \dim M > 2$ and even, functions $\tilde K_{a,b}(u;m)$
and $\tilde H_{a,b,c}(u,v;m)$ are jets of the following functions at zero:  Eq.
\eqref{eq:hgeofun-evenm-tildeKab} and  \eqref{eq:hgeofun-evenm-tildeHabc}. 
Using the recurrence relations in Proposition \eqref{prop:hgfun-relations-K-H},
 we  can give    a short induction proof as below.
\begin{prop}
    Assume that the dimension $m$ is even and greater or equal than $4$, set $j_m
    =(m-4)/2 \in \Z_{\ge0}$, we have 
    \begin{align}
        \tilde K_{a,b}(u;m) &= \frac{d^{j_m}}{dz^{j_m}} \Big|_{z=0}
        (1-z)^{-a} (1-u-z)^{-b} ,
        \label{eq:hgeofun-evenm-tildeKab} \\
        \tilde H_{a,b,c}(u,v;m) &=
        \frac{d^{j_m}}{dz^{j_m}} \Big|_{z=0}
        (1-z)^{-a} (1-u-z)^{-b}(1-v-z)^{-c}.
        \label{eq:hgeofun-evenm-tildeHabc} 
    \end{align}
\end{prop}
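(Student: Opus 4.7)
I will proceed by induction on the even dimension $m$, with Proposition \ref{prop:hgfun-relations-K-H} providing the step from $m$ to $m+2$. The core observation driving the argument is that the three-term recurrence \eqref{eq:hgfun-relations-H-m+2-a+1-b+1} (and its two-term analogue \eqref{eq:hgfun-relations-K-m+2-a+1-b+1}) mirrors exactly the Leibniz rule applied to $\tfrac{d}{dz}$ on the triple (resp. double) product $(1-z)^{-a}(1-u-z)^{-b}(1-v-z)^{-c}$, since
\[
\frac{d}{dz}\bigl[(1-z)^{-a}(1-u-z)^{-b}(1-v-z)^{-c}\bigr] = \sum\text{(three terms, each raising exactly one exponent by one)}.
\]

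For the base case $m=4$ we have $j_m = 0$, so the claim reduces to evaluating the right-hand sides at $z=0$. In this case $\tilde d_4 = a+b$ (resp. $a+b+c$) coincides with the denominator parameter of the relevant hypergeometric function, and the standard reductions
\[
\pFq21(\gamma,\beta;\gamma;u) = (1-u)^{-\beta}, \qquad F_1(\gamma;\beta_1,\beta_2;\gamma;x,y) = (1-x)^{-\beta_1}(1-y)^{-\beta_2},
\]
(immediate from the series, since $(\gamma)_n/(\gamma)_n = 1$) identify $\tilde K_{a,b}(u;4)$ and $\tilde H_{a,b,c}(u,v;4)$ with the required products on the right-hand sides.

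For the inductive step, assume the claim at dimension $m$. For the one-variable family, apply \eqref{eq:hgfun-relations-K-m+2-a+1-b+1} and the inductive hypothesis:
\begin{align*}
\tilde K_{a,b}(u;m+2)
&= a\, \tilde K_{a+1,b}(u;m) + b\, \tilde K_{a,b+1}(u;m) \\
&= \frac{d^{j_m}}{dz^{j_m}}\bigg|_{z=0}
\bigl[a(1-z)^{-a-1}(1-u-z)^{-b} + b(1-z)^{-a}(1-u-z)^{-b-1}\bigr] \\
&= \frac{d^{j_m}}{dz^{j_m}}\bigg|_{z=0}
\frac{d}{dz}\bigl[(1-z)^{-a}(1-u-z)^{-b}\bigr]
= \frac{d^{j_m+1}}{dz^{j_m+1}}\bigg|_{z=0} (1-z)^{-a}(1-u-z)^{-b},
\end{align*}
which is \eqref{eq:hgeofun-evenm-tildeKab} at $m+2$ because $j_{m+2} = j_m + 1$. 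The two-variable case follows identically from \eqref{eq:hgfun-relations-H-m+2-a+1-b+1}, using the three-factor Leibniz identity recalled above in place of the two-factor one.

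There is essentially no computational obstacle here: the whole content of the proof is a pair of product-rule identities matched against the recurrences, together with the base-case reductions of $\pFq21$ and $F_1$. The only mild care required is bookkeeping — verifying that the pairing of the parameters $b,c$ with the variables $u,v$ in the definitions \eqref{eq:hgeofun-defn-tildeKab}, \eqref{eq:hgeofun-defn-tildeHabc} is consistent with the pairing on the right-hand sides of the formulas being proved, so that the base case lines up correctly before the induction is set in motion.
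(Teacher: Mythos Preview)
Your proposal is correct and follows essentially the same inductive argument as the paper: the base case $m=4$ is handled by the reductions $\pFq21(\gamma,\beta;\gamma;z)=(1-z)^{-\beta}$ and $F_1(\gamma;\beta_1,\beta_2;\gamma;x,y)=(1-x)^{-\beta_1}(1-y)^{-\beta_2}$, and the step $m\to m+2$ is exactly Proposition~\ref{prop:hgfun-relations-K-H} matched against the Leibniz rule for $\tfrac{d}{dz}$. The paper only spells out the $\tilde K$ case and then asserts that the same reasoning handles $\tilde H$, so your treatment is, if anything, slightly more explicit; your closing caution about the $b,c$ versus $u,v$ pairing is also well placed.
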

\begin{proof}
    We use induction on the dimension $m$. 
    Let us focus on the one variable case first. 
    When $m =4$, $\tilde d_m = a+b$,
    \eqref{eq:hgeofun-evenm-tildeKab} follows from the identity:
    \begin{align*}
        \pFq21(\alpha,\beta;\beta,z) = \pFq21(\beta,\alpha;\beta,z)
        = (1-z)^{-\alpha}.
    \end{align*}
    Now assume that \eqref{eq:hgeofun-evenm-tildeKab} holds for some even $m$,
    it remains to show that 
    \begin{align}
        \tilde K_{a,b}(u;m+2)& = \frac{d^{j_m}}{dz^{j_m}} \Big|_{z=0}
        \frac{d}{dz}\sbrac{ (1-z)^{-a} (1-u-z)^{-b} }\nonumber \\
        &= a \tilde K_{a+1,b}(u;m) + b \tilde K_{a,b+1}(u;m),    
        \label{eq:hgeofun-evenm-tildeKab-inductive-step}
    \end{align}
    which is valid due to Proposition \eqref{prop:hgfun-relations-K-H}.
    Therefore \eqref{eq:hgeofun-evenm-tildeKab} has been proved by induction. Same arguments work for \eqref{eq:hgeofun-evenm-tildeHabc}.

\end{proof}

\section{Variational Calculus with respect to a noncommutative variable}
\label{sec:varcalwrptncvar}
\subsection{Notations}
The calculation  in this section can be carried out in an abstract setting as
in \cite{leschdivideddifference}. Nevertheless, to smooth the exploration, we
take the ambient $C^*$-algebra to be a noncommutative two torus
$C(\T_\theta^2)$ for some irrational $\theta$. Denote by $\nabla$ the
Levi-Civita connection for the flat metric, and then $i\nabla_j$ for $j=1,2$
correspond to the basic derivations, see \cite[Sect.
1.3]{MR3194491}. 

% In section \ref{sec:symbloic-computation}, we treated Connes'
% pseudo-differential calculus The basic derivations 

We start with  a fixed self-adjoint element 
$h = h^*$  whose exponential $k = e^h$ defines a Weyl factor. Consider the
variation $\delta_a$ along another self-adjoint operator $a$: 
\begin{align}
    k_\varepsilon \defeq k(\varepsilon) = e^{h+\varepsilon a}, \,\,\,\,
    \delta_a  \defeq \frac{d}{d\varepsilon}\Big|_{\varepsilon=0}. 
    \label{eq:varprob-defn-delta_a} 
\end{align}
% The differential $\delta_a$ is a functional calculus analog of directional
% derivative (along a vector $a$) in  calculus. There are another type of
% derivations on 

The modular operator and the modular
 derivation are denoted by  
 bold letters $\mathbf y$ and $\mathbf x$ respectively:
\begin{align}
    \label{eq:varprob-modop-modder}
  \mathbf  y = \op{Ad}_k = k^{-1}(\cdot) k, \,\,\, \mathbf x = \log
  \mathbf y = -\op{ad}_h = [\cdot,h].
\end{align}
They should be compared with  the regular font $y$ and $x$, which stand for the
positive and real variable, respectively, for
spectral functions\footnote{
    In \cite{LIU2017138}, modular operators and modular derivations are denoted
    by $\modop$ and $\logmodop$ respectively. In this section, they will be
    treated as arguments of spectral functions in lengthy variational calculus, 
    therefore it should be more natural to denote them as letters and use the
    bold font whenever we need to emphasize their operator nature.
}.  Also, as explained in section
\ref{sec:notations-from-Les17}, when acting on
a product of $n$ factors: $\rho_1 \cdots \rho_n$,   we shall  denote by $ 
\mathbf y^{(j)} $, $\mathbf  x^{(j)}$ or simply $\mathbf y_j$, $\mathbf x_j$ to
indicate that the operator acts only on the $j$-the factor, where
$j=1,\dots,n$. To make it more explicit, 
consider a multi-variable  function $K(x_1,\dots,x_n)$  which admits
a Fourier transform:
\begin{align*}
    K(x_1,\dots,x_n) = \int_{\R^n} \alpha(\xi_1,\dots,\xi_n) e^{-i (x_1\xi_1 + 
            \cdots  + x_n\xi_n
    )} d\xi_1\cdots\xi_n,
\end{align*}
then the functional calculus $K(\mathbf x_1,\dots,\mathbf x_n)$ has the
following description 
\begin{align}
    K(\mathbf x_1,\dots,\mathbf x_n) = \int_{\R^n}
\alpha(\xi_1,\dots,\xi_n) e^{-i (\mathbf x_1\xi_1 + 
            \cdots  + \mathbf x_n\xi_n
    )} d\xi_1\cdots d \xi_n.
    \label{eq:varprob-defn-Kx1xn}
\end{align}
To define $K(\mathbf y_1,\dots,\mathbf y_n)$, we need to assume 
$H(x_1,\dots,x_n) \defeq K(e^{x_1},\dots,e^{x_n})$ admits a Fourier transform:
\begin{align*}
    H(x_1,\dots,x_n) = \int_{\R^n} \beta(\xi_1,\dots,\xi_n) e^{-i (x_1\xi_1 + 
            \cdots  + x_n\xi_n
    )} d\xi_1\cdots d\xi_n.
\end{align*}
Notice that $e^{i\mathbf x_j \xi_j} = (\mathbf y_j)^{i\xi_j}$, let 
$\rho_1\cdots \rho_n$ be a product, or an elementary tensor in the $C^*$-algebra,
we have,
\begin{align}
    K(\mathbf y_1,\dots,\mathbf y_n)\cdot (\rho_1 \cdots \rho_n)
    = \int_{\R^n} \beta(\xi_1,\dots,\xi_n)
    \mathbf y^{-i\xi_1}(\rho_1) \cdots \mathbf y^{-i\xi_n}(\rho_n) 
    d\xi_1\cdots d\xi_n.
    \label{eq:varprob-defn-Ky1yn}
\end{align}
 Let $\varphi_0$ be the canonical trace on $C^\infty(\T^2_\theta)$,
later we will need  
 some integration by parts identities.
 \begin{lem}
     Keep the notations. We have 
 \begin{align}
    \label{eq:varprob-integrationbyparts-onevar}
    \varphi_0\brac{
        \rho_1 \cdot K(\mathbf y_1)(\rho_2)
    } &=
    \varphi_0\brac{
        K(\mathbf y_1^{-1})(\rho_1) \cdot \rho_2
    }  ,\\
    \varphi_0\brac{
        K(\mathbf y_1,\mathbf y_2)(\rho_1 \cdot \rho_2)
    } &=
    \varphi_0\brac{
        [K(\mathbf y, \mathbf y^{-1}) (\rho_1) ] \cdot \rho_2
    },
    \label{eq:varprob-integrationbyparts-I-I}
    \\
    \label{eq:varprob-integrationbyparts-I}
    \varphi_0\brac{
        \rho_1\cdot K(\mathbf y_1, \mathbf y_2)(\rho_2\cdot\rho_3)
    } &= \varphi_0 \brac{
    K(\mathbf y_2, \mathbf y_1^{-1} \mathbf y_2^{-1})(\rho_1 \cdot \rho_2)\cdot
\rho_3)
    } ,\\
\varphi_0\brac{
     K(\mathbf y_1, \mathbf y_2)(\rho_1\cdot\rho_2) \cdot \rho_3
 } &=  \varphi_0 \brac{
     \rho_1 \cdot K(\mathbf y_1^{-1} \mathbf y_2^{-2}, \mathbf y_1)
     (\rho_2\cdot \rho_3)
 }.
    \label{eq:varprob-integrationbyparts-II}
\end{align}
    
 \end{lem}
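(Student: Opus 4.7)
The plan is to reduce all four identities to the single-character case and then verify each as a direct application of the trace property of $\varphi_0$. The key observation is that \eqref{eq:varprob-defn-Ky1yn} expresses $K(\mathbf y_1,\ldots,\mathbf y_n)$ as a Fourier superposition of products of pure conjugation operators $\mathbf y^{-i\xi_j}$ acting on individual slots. Since each identity in the lemma is linear in $K$ and both sides admit the same Fourier-integral representation in $\beta(\xi_1,\ldots,\xi_n)$, it suffices to verify each identity for monomials $K(y_1,\ldots,y_n)=y_1^{s_1}\cdots y_n^{s_n}$, $s_j\in\mathbb{C}$. For such a monomial, $K(\mathbf y_1,\ldots,\mathbf y_n)$ acts on $\rho_1\cdots\rho_n$ as simultaneous conjugation $\rho_j\mapsto \mathbf y^{s_j}(\rho_j)=k^{-s_j}\rho_j k^{s_j}$, so that all terms become explicit products of powers of $k$ interlaced with the $\rho_j$'s inside $\varphi_0$, where cyclicity of the trace can be applied directly.

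For \eqref{eq:varprob-integrationbyparts-onevar} this is transparent: moving $k^{s_1}$ from the right end to the left end inside $\varphi_0(\rho_1 k^{-s_1}\rho_2 k^{s_1})$ gives $\varphi_0(\mathbf y^{-s_1}(\rho_1)\rho_2)$, and the substitution $y\mapsto y^{-1}$ sends the character $y^{s_1}$ to $y^{-s_1}$, matching the claimed $K(\mathbf y_1^{-1})$. For \eqref{eq:varprob-integrationbyparts-I-I}, the monomial LHS is $\varphi_0(k^{-s_1}\rho_1 k^{s_1-s_2}\rho_2 k^{s_2})$; cycling $k^{s_2}$ to the front yields $\varphi_0(\mathbf y^{s_1-s_2}(\rho_1)\rho_2)$, exactly the result of substituting $(y_1,y_2)\mapsto(y,y^{-1})$ in $y_1^{s_1}y_2^{s_2}$.

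The three-factor identities \eqref{eq:varprob-integrationbyparts-I} and \eqref{eq:varprob-integrationbyparts-II} follow by the same mechanism but with more bookkeeping. On a monomial, the LHS of \eqref{eq:varprob-integrationbyparts-I} reads $\varphi_0(\rho_1 k^{-s_1}\rho_2 k^{s_1-s_2}\rho_3 k^{s_2})$; cycling $k^{s_2}$ to the very left, then inserting $k^{-s_2}k^{s_2}=1$ between $\rho_2$ and $\rho_3$, rewrites this as $\varphi_0(\mathbf y^{s_2}(\rho_1)\mathbf y^{s_2-s_1}(\rho_2)\rho_3)$, which is precisely what the substitution $(y_1,y_2)\mapsto(y_2,y_1^{-1}y_2^{-1})$ produces from $y_1^{s_1}y_2^{s_2}$. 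An analogous cyclic shift of the leftmost $k^{-s_1}$ to the far right, followed by re-inserting powers of $k$ between $\rho_1,\rho_2$ and between $\rho_2,\rho_3$, handles \eqref{eq:varprob-integrationbyparts-II}.

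The main obstacle is purely combinatorial: making sure that after the single cyclic permutation, the resulting exponents of $k$ assemble correctly into the conjugation operators $\mathbf y^{t_j}$ dictated by the claimed substitution on $K$. Because all $k^s$ mutually commute, the only nontrivial step is accounting for the interaction between the trace cycling and the substitution rule; once that matching is verified on a single character $y_1^{s_1}\cdots y_n^{s_n}$, Fourier inversion \eqref{eq:varprob-defn-Ky1yn} promotes the identity to all admissible $K$. No analytic subtleties beyond the Fubini argument already invoked in \cite{leschdivideddifference} for interchanging the Fourier integral with $\varphi_0$.
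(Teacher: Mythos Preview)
Your approach is correct and is essentially the same as the paper's: the paper likewise unwinds $K(\mathbf y_1,\mathbf y_2)$ via the Fourier representation \eqref{eq:varprob-defn-Ky1yn}, applies the trace property of $\varphi_0$ to shuffle the factor $\mathbf y^{-iv}(\rho_2)$, and reassembles; your ``reduce to monomials $y_1^{s_1}\cdots y_n^{s_n}$'' is exactly this argument phrased at the level of a single Fourier character. One small slip: in your check of \eqref{eq:varprob-integrationbyparts-I} the exponents should read $\mathbf y^{-s_2}(\rho_1)\,\mathbf y^{\,s_1-s_2}(\rho_2)$ (recall $\mathbf y^{s}(\rho)=k^{-s}\rho k^{s}$), which is indeed what the substitution $(y_1,y_2)\mapsto(y_2,y_1^{-1}y_2^{-1})$ produces---your signs are flipped but consistently so, so the verification still goes through once corrected.
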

 \begin{proof}
     We will only check \eqref{eq:varprob-integrationbyparts-I-I} and leave the
     rest to the  reader. Let $\alpha(u,v)$ be the Fourier transform of the function
     $H(x_1, x_2) \defeq K(e^{x_1},e^{x_2})$ defined as before, then
     \begin{align*}
         \varphi_0\brac{
         K(\mathbf y_1,\mathbf y_2)(\rho_1 \cdot \rho_2)
     }& = \varphi_0\brac{\int_{\R^2} 
     \alpha(u,v) \mathbf y^{-iu}(\varphi_1) \mathbf y^{-iv}(\rho_2)  dudv
 }
      \\ &=\varphi_0\brac{
\sbrac{ \int_{\R^2} 
\alpha(u,v) \mathbf y^{-i(u-v)}(\rho_1) du dv 
} \cdot \rho_2
}
\\ &= \varphi_0\brac{
    \sbrac{
        K(\mathbf y,\mathbf y^{-1})(\rho_1)
    }\cdot \rho_2
}.
     \end{align*}
     Notice that we have used the trace property of $\varphi_0$ 
     in order to reach  the second step. 
 \end{proof}
 
\subsection{Variational Calculus} 
In the previous work of modular scalar curvature
\cite{MR3540454,MR3194491,MR3369894,LIU2017138,2016arXiv161109815C},
variation was carried out with respect to  $h = \log k$, which can be think of
as a noncommutative coordinate in the tangent space of the moduli space of
metrics. In this section,  we would like to perform  parallel computations with
respect
to the coordinate $k$, so that   the spectral functions (called local curvature
functions in previous works) can be simplified   by getting rid of the spectral
functions  (cf. \cite[Eq. (6.9), (6.10)]{MR3194491}) arising   from differentiating
$e^h$, such as 
   $\nabla e^h$ and $\nabla^2 e^h$.

 It has been pointed out in \cite{leschdivideddifference} that 
 divided difference plays a crucial role in such variational calculus. Recall 
\begin{align}
    [x,y](T(z)) \defeq \frac{T(x) - T(y)}{ x - y}. 
    \label{eq:varprob-defn-divideddiff}
\end{align}
For example, $[x , y] (z^m)  = (y^m - x^m)/(x-y)$.

The main goal of this section is to derive the  variation of the following
local expression
\begin{align}
    \delta_a  \varphi_0 \brac{
    k^j  T(\mathbf y)(\nabla k) \cdot \nabla k
    }, \,\,\,\, \forall j\in\R.
    \label{eq:var-EHfun}
\end{align}
where $\varphi_0$ is the canonical trace on noncommutative two tori.
The final result is given in Theorem \ref{thm:varprob-var-localT}.
\begin{lem}
    \label{lem:varprob-nabla-k^j}
    Let $k = e^h$ with $h$ self-adjoint. For $j  \in\R$, 
 \begin{align}
    \label{eq:varprob-nabla-k^j}
     \nabla k^j = k^{j-1} \left( [1,\mathbf y] (z^j)  \right) (\nabla k)
     = k^{j-1 }\frac{\mathbf y^j-1}{\mathbf y-1} (\nabla k).
 \end{align}
  The same identity holds when the covariant differential $\nabla$ is replaced
 by the variation $\delta_a$ differential.
\end{lem}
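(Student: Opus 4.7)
The plan is to express both $\nabla k^j$ and $\nabla k$ in terms of $\nabla h$ via Duhamel's formula, and then eliminate $\nabla h$. For any derivation $D$ (either the covariant $\nabla$ or the variational $\delta_a$), Duhamel's identity gives
\begin{align*}
    D(e^{jh}) = \int_0^1 e^{sjh}\,(jDh)\,e^{(1-s)jh}\,ds.
\end{align*}
Writing $L_h, R_h$ for left/right multiplication by $h$ (which commute), the right-hand side is the functional calculus
\begin{align*}
    D(k^j) = \frac{e^{jL_h}-e^{jR_h}}{L_h - R_h}(Dh) = \frac{L_k^j - R_k^j}{L_h-R_h}(Dh).
\end{align*}

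Next I would specialize to $j=1$ to obtain $Dk = \tfrac{L_k-R_k}{L_h-R_h}(Dh)$, invert (on the subspace on which the difference operator is defined), and substitute to eliminate $Dh$:
\begin{align*}
    D(k^j) = \frac{L_k^j-R_k^j}{L_k - R_k}(Dk).
\end{align*}
Since $L_k$ and $R_k$ commute, I can set $Z = R_k L_k^{-1}$ and factor
\begin{align*}
    \frac{L_k^j-R_k^j}{L_k-R_k} = L_k^{j-1}\cdot\frac{1-Z^j}{1-Z}.
\end{align*}
Finally I would identify $Z$ with the modular operator: $Z(\rho) = L_k^{-1} R_k \rho = k^{-1}\rho k = \mathbf y(\rho)$, while $L_k^{j-1}$ is just left multiplication by $k^{j-1}$. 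Rewriting the resulting geometric sum as a divided difference $\tfrac{\mathbf y^j-1}{\mathbf y-1} = [1,\mathbf y](z^j)$ using \eqref{eq:varprob-defn-divideddiff} yields \eqref{eq:varprob-nabla-k^j}. The argument is verbatim the same for $D = \delta_a$, since what is used is only that $D$ is a derivation acting on $h$ and $k$.

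The only genuine subtlety is making the Duhamel expression and the functional-calculus manipulations rigorous for real (in particular non-integer) $j$; one could alternatively verify the identity for $j\in\mathbb Z$ by a short induction using the Leibniz rule (the base case $j=1$ is trivial, and $\nabla k^{j+1} = (\nabla k^j)k + k^j \nabla k$ combines with the inductive hypothesis and the relation $(\nabla k)\, k = k\, \mathbf y(\nabla k)$ to give the step), and then extend to $j\in\mathbb R$ by analyticity of both sides in $j$ on the holomorphic functional calculus of the (bounded, positive) operator $\mathbf y$. I expect the main obstacle to be cleanly justifying this analytic continuation / functional calculus step rather than the algebraic identity itself.
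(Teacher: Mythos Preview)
Your proposal is correct and follows essentially the same route as the paper: apply Duhamel to $e^{jh}$ to express $\nabla k^j$ through $\nabla h$, invert the $j=1$ instance to eliminate $\nabla h$, and simplify using the relation between left/right multiplication by $k$ and the modular operator $\mathbf y$. The paper phrases the same computation in terms of $\mathbf x = -\mathrm{ad}_h$ and $\mathbf y = e^{\mathbf x}$ rather than your $L_k,R_k$, but the algebra is identical; your additional remarks on the induction/analytic-continuation alternative and on the functional-calculus subtleties go a bit beyond what the paper spells out.
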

\begin{proof}
     Recall the notation of the modular derivation $\mathbf x$ and modular
     operator $\mathbf y$ in
     \eqref{eq:varprob-modop-modder}. 
    Equation \eqref{eq:varprob-nabla-k^j}  is  another   version of
    the formula (cf. \cite[Sect. 6]{MR3194491} and \cite{leschdivideddifference})
     \begin{align*}
         \nabla e^h = e^h \frac{ e^{\mathbf x} -1}{ \mathbf x}\nabla h
     \end{align*}
    by changing the variable:
    \begin{align*}
        \nabla \log k  = k^{-1}\frac{\log \mathbf y}{ \mathbf y-1} \nabla k .
    \end{align*}
    Indeed, for $k^j = e^{jh}$,
     \begin{align*}
         \nabla k^j &= \nabla e^{jh} = e^{jh}\frac{e^{j\mathbf x}-1 }{j\mathbf x}
         \nabla (jh) 
         = e^{jh}
         \frac{e^{j\mathbf x}-1 }{j\mathbf x} (j k^{-1}) 
\frac{\log \mathbf y}{ \mathbf y-1} \nabla k \\
&= k^{j-1 }\frac{\mathbf y^j-1}{\mathbf y-1} (\nabla k).
     \end{align*}
 \end{proof}

 The variation $\delta_a( T(\mathbf y))$ can be computed by the following
 Taylor expansion. We refer the proof to \cite{leschdivideddifference}.
\begin{prop} [\cite{leschdivideddifference}, Prop. 3.11]
    \label{prop:varprob-Texp-in-logk}
Let $h,b$ be two self-adjoint elements and $T(x)$ be a Schwartz function on
$\R$. We have the Taylor expansion for the operator $T(\op{ad}_{h+b})$ upto
the first order:
% Denote $\mathbf x = \op{ad}_h$ and for $j=1,2$, $x_j = \op{ad}_h^{(j)}$ as in
% \cite{leschdivideddifference}.  
\begin{align} 
    \label{eq:varprob-Texp-in-logk}
    T(\op{ad}_{h+b})(\psi)
    & = T(\mathbf x)(\psi) 
- ([\mathbf x_1+ \mathbf x_2, \mathbf x_2]T)  (b \cdot \psi) 
\\
&+ ([\mathbf
    x_1 + \mathbf x_2, \mathbf x_1]T)  (\psi \cdot b) +  o(b),
    \nonumber
    \end{align}
    as $b \rightarrow 0$ and for all $\psi$ in the ambient $C^*$-algebra.
    %$\psi \in C^\infty(\T^m_\theta)$.
\end{prop}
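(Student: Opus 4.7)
The plan is to derive the expansion by combining Fourier inversion with a first-order Duhamel expansion for the perturbed adjoint action. Assume first that $T$ is Schwartz, so that $T(u)=\int_{\R}\hat T(\xi)\, e^{-i\xi u}d\xi$. Then by functional calculus
\begin{align*}
 T(\op{ad}_{h+b})(\psi)=\int_{\R}\hat T(\xi)\,e^{-i\xi\,\op{ad}_{h+b}}(\psi)\,d\xi,
\end{align*}
so it suffices to expand the semigroup $e^{-i\xi\,\op{ad}_{h+b}}=e^{-i\xi(\op{ad}_h+\op{ad}_b)}$ to first order in $\op{ad}_b$ and integrate against $\hat T$.

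The first main step is Duhamel's identity: for commutator operators $A=-i\xi\,\op{ad}_h$ and $B=-i\xi\,\op{ad}_b$,
\begin{align*}
 e^{A+B}=e^{A}+\int_0^1 e^{sA}\,B\,e^{(1-s)A}\,ds+o(B).
\end{align*}
Applied to $\psi$, the linear term reads
\begin{align*}
(-i\xi)\int_0^1 e^{-i\xi s\,\op{ad}_h}\bigl[b,\,e^{-i\xi(1-s)\,\op{ad}_h}(\psi)\bigr]\,ds,
\end{align*}
which one expands as $b\!\cdot\!(\cdots)-(\cdots)\!\cdot\! b$ and then distributes the outer conjugation $e^{-i\xi s\,\op{ad}_h}=\mathbf y^{\pm i\xi s}$ factor-by-factor over the product. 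This places one power of $\mathbf y$ on the $b$-slot and a combined power on the $\psi$-slot, yielding, in the two-slot contraction notation, the sum
\begin{align*}
(-i\xi)\int_0^1\bigl[\mathbf y_1^{-i\xi s}\mathbf y_2^{-i\xi}(b\otimes\psi)
-\mathbf y_1^{-i\xi}\mathbf y_2^{-i\xi s}(\psi\otimes b)\bigr]\,ds.
\end{align*}

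The second main step is to reinterpret the $\xi$-integrals as divided differences. Setting $\mathbf y_j^{-i\xi s}=e^{-i\xi s\mathbf x_j}$ and integrating $\hat T(\xi)(-i\xi)$ against the resulting exponential, one recognises
\begin{align*}
\int_{\R}\hat T(\xi)(-i\xi)\,e^{-i\xi(s\mathbf x_1+\mathbf x_2)}\,d\xi=T'(s\mathbf x_1+\mathbf x_2),
\end{align*}
and likewise for the second contribution. The one-dimensional identity
\begin{align*}
\int_0^1 T'(\alpha+s\beta)\,ds=\frac{T(\alpha+\beta)-T(\alpha)}{\beta}=[\alpha+\beta,\alpha](T)
\end{align*}
applied via joint functional calculus in the commuting operators $\mathbf x_1,\mathbf x_2$ then collapses the $s$-integral to precisely $[\mathbf x_1+\mathbf x_2,\mathbf x_2](T)$ acting on the $b\otimes\psi$ slot and $[\mathbf x_1+\mathbf x_2,\mathbf x_1](T)$ on the $\psi\otimes b$ slot. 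Matching this against the definition \eqref{eq:varprob-defn-divideddiff} gives the two divided-difference terms in the statement; the remainder is $o(b)$ uniformly by the decay of $\hat T$.

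The hard part will be keeping the slot bookkeeping and sign conventions consistent: one must check that $\mathbf x=[\cdot,h]$ and $\op{ad}_{h+b}$ combine with the expected signs when the commutator is pushed through the $\mathbf y$-conjugations, and that the two tensor factors are consistently indexed before one rewrites them as $b\cdot\psi$ and $\psi\cdot b$. A secondary technical point is justifying the exchange of the $\xi$-integral, the $s$-integral, and the Duhamel expansion; this uses the Schwartz assumption on $T$ together with a Fubini-type lemma of the kind already cited from \cite{leschdivideddifference} earlier in the paper (see the use in the proof of Proposition \ref{prop:hgeofun-two-var-family}). The density of Schwartz functions in the symbol classes relevant to the applications in Section \ref{sec:varcalwrptncvar} extends the identity beyond the Schwartz case as needed.
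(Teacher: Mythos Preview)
The paper does not supply its own proof of this proposition; it simply records the statement and writes ``We refer the proof to \cite{leschdivideddifference}.'' Your argument via Fourier inversion plus a first-order Duhamel expansion is exactly the standard route taken in that reference, so there is nothing substantively different to compare.

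One remark on the sign issue you flagged: note that the paper's convention is $\mathbf x=-\op{ad}_h$ (Eq.~\eqref{eq:varprob-modop-modder}), so that $e^{-i\xi\,\op{ad}_h}=\mathbf y^{+i\xi}$, not $\mathbf y^{-i\xi}$. If you trace this through, the overall sign of the first-order correction flips relative to what you wrote, which accounts for the minus sign in front of $[\mathbf x_1+\mathbf x_2,\mathbf x_2]T$ and the plus in front of $[\mathbf x_1+\mathbf x_2,\mathbf x_1]T$ in the stated formula. This is precisely the bookkeeping you anticipated; once corrected, your derivation lines up with the statement. The analytic justifications (Fubini, decay of $\hat T$, density) are as you say.
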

It has an exponential version:
\begin{prop}
     \label{prop:varprob-Texp-in-k}
%     Recall the setup: $k_\varepsilon = \exp(\log k + \varepsilon a)$
%     and $\mathbf y_\varepsilon = \op{Ad}_{k_\varepsilon} = k_\varepsilon^{-1}
%     (\cdot) k_\varepsilon$ with the variation $\delta_a$ in
%     \eqref{eq:varprob-defn-delta_a} . 
    Let $T(y)$ be a Schwartz function on
    $\R^+$, denote
\begin{align}
        D(T)(y_1, y_2) = y_1 \left[ y_1 y_2, y_1 \right] T
        = \frac{T(y_1 y_2) - T(y_1)}{y_2 -1}. 
        \label{eq:varprob-defn-DT}
    \end{align}
    Then
    \begin{align}
\label{eq:varprob-Texp-in-k}
        \delta_a  (T(\mathbf y))(\nabla k)    &= 
        k^{-1}
        \left[ \mathbf y_1^{-1} D(T)(\mathbf y_1,\mathbf y_2) \right]
        %\left[\mathbf y_1 \mathbf y_2,\mathbf y_1 \right] T
        \cdot ( (\nabla k) 
        \delta_a  (k)) \\
        & \,\,\,\,\, \, - 
        k^{-1} 
        D(T)(\mathbf y_2, \mathbf y_1) 
        %\mathbf y_1 \left[ \mathbf y_1 \mathbf y_2, \mathbf y_1 \right] T
        \cdot (\delta_a  (k) \nabla k)
        \nonumber.
    \end{align}
    Same identity holds when $\delta_a$ is replaced by the covariant
    differential $\nabla$.    
\end{prop}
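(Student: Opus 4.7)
My plan is to deduce this exponential Taylor expansion from the logarithmic one in Proposition \ref{prop:varprob-Texp-in-logk}. Set $\tilde T(x)\defeq T(e^x)$, which is Schwartz on $\R$ under the standing assumption on $T$, so that $T(\mathbf y)=\tilde T(\mathbf x)$ by functional calculus. The variation $k_\varepsilon=e^{h+\varepsilon a}$ is the additive perturbation $h\mapsto h+\varepsilon a$, hence Proposition \ref{prop:varprob-Texp-in-logk} applied to $\tilde T$ with $b=\varepsilon a$ gives, after differentiating at $\varepsilon=0$,
\[
\delta_a\bigl(T(\mathbf y)\bigr)(\psi) = \pm\bigl([\mathbf x_1+\mathbf x_2,\mathbf x_2]\tilde T\bigr)(a\cdot\psi) \mp \bigl([\mathbf x_1+\mathbf x_2,\mathbf x_1]\tilde T\bigr)(\psi\cdot a),
\]
with the overall signs pinned down by a check on the linear test function $T(y)=y$.

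Next, I convert the divided differences of $\tilde T$ into divided differences of $T$. Because $\mathbf x_1$ and $\mathbf x_2$ act on distinct slots they commute, so $e^{\mathbf x_1+\mathbf x_2}=\mathbf y_1\mathbf y_2$, giving
\[
[\mathbf x_1+\mathbf x_2,\mathbf x_2]\tilde T = \frac{T(\mathbf y_1\mathbf y_2)-T(\mathbf y_2)}{\mathbf x_1} = D(T)(\mathbf y_2,\mathbf y_1)\cdot\frac{\mathbf y_1-1}{\mathbf x_1},
\]
and similarly for the companion term with the two slots swapped. The spurious factor $\frac{\mathbf y_j-1}{\mathbf x_j}$ is precisely the Jacobian for the change of coordinates $h\mapsto k=e^h$ on the $j$-th slot.

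The third step is to substitute Lemma \ref{lem:varprob-nabla-k^j}, in its $\delta_a$-form together with $\delta_a h=a$, to trade $a$ for $\delta_a k$:
\[
a = k^{-1}\cdot\frac{\mathbf x}{\mathbf y-1}(\delta_a k).
\]
Since $k$ commutes with $h$, the factor $k^{-1}$ commutes with every operator built from $\mathbf x$ or $\mathbf y$ on the same slot and can be pulled outside the contraction; the operator $\frac{\mathbf x_1}{\mathbf y_1-1}$ then cancels exactly the Jacobian $\frac{\mathbf y_1-1}{\mathbf x_1}$ from step two, leaving $k^{-1}D(T)(\mathbf y_2,\mathbf y_1)$ acting on $\delta_a k\cdot\psi$. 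For the $(\psi\cdot a)$ term, $k^{-1}$ must first be commuted past $\psi$ via $\psi\cdot k^{-1}=k^{-1}\mathbf y^{-1}(\psi)$, which produces the $\mathbf y_1^{-1}$ that appears in the statement; after the same cancellation one obtains $k^{-1}\mathbf y_1^{-1}D(T)(\mathbf y_1,\mathbf y_2)$ acting on $\psi\cdot\delta_a k$. Setting $\psi=\nabla k$ finishes the identity, and the claim with $\delta_a$ replaced by $\nabla$ follows from the same argument, since both Lemma \ref{lem:varprob-nabla-k^j} and the first-order expansion hold verbatim for any derivation.

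The main obstacle will be the bookkeeping: tracking signs from the divided differences and from the difference between $\mathbf y-1$ and $1-\mathbf y$, and more delicately the asymmetry between the two summands caused by $k^{-1}$ sitting on the left of the expression for $a$; this asymmetry is exactly why the $\psi\cdot a$ contribution acquires a $\mathbf y_1^{-1}$ while the $a\cdot\psi$ contribution does not. A sanity check on monomials $T(y)=y^n$, where $T(\mathbf y)(\psi)=k^{-n}\psi k^n$ can be differentiated directly using Lemma \ref{lem:varprob-nabla-k^j}, reproduces both sides in closed form and is the cleanest way to lock down all signs before running the general argument.
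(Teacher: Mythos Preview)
Your proposal is correct and follows exactly the route the paper indicates: set $\tilde T(x)=T(e^x)$, apply Proposition~\ref{prop:varprob-Texp-in-logk} with $b=\varepsilon a$, and then perform the substitutions (the paper's proof literally says ``follows from \eqref{eq:varprob-Texp-in-logk} after some substitutions'' and nothing more). You have simply unpacked those substitutions---the Jacobian factor $(\mathbf y_j-1)/\mathbf x_j$ from converting divided differences of $\tilde T$ to $D(T)$, its cancellation against $a=k^{-1}\tfrac{\mathbf x}{\mathbf y-1}(\delta_a k)$ from Lemma~\ref{lem:varprob-nabla-k^j}, and the commutation $\psi k^{-1}=k^{-1}\mathbf y^{-1}(\psi)$ that produces the asymmetric $\mathbf y_1^{-1}$---so your argument is a fleshed-out version of the paper's one-line proof rather than a different approach.
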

\begin{proof}
     Denote $\tilde T(x) = T(e^x)$, then
    \begin{align*}
        \delta_a(T(\mathbf y)) = \frac{d}{d\varepsilon} \Big|_{\varepsilon=0}
        \tilde T(\op{ad}_{h+\varepsilon a}).
    \end{align*}
    Eq.  \eqref{eq:varprob-Texp-in-k} follows from
    \eqref{eq:varprob-Texp-in-logk} after some substitutions. 
\end{proof}
Now we are ready to compute the commutator of the covariant differential
$\nabla$ and the modular action $T(\mathbf y)$.
\begin{lem}
    \label{lem:varprob-swapping-nabla-x}
    Keep the notations. 
%     \begin{align*}
%         \nabla \sbrac{ T(\mathbf y)  (\nabla k) - T(\mathbf y) (\nabla^2 k) }
%         = k^{-1} \brac{
%               [\mathbf y_1 \mathbf y_2, \mathbf y_1]T
%               - \mathbf y_2 [\mathbf y_1 \mathbf y_2, \mathbf y_2]T
%           } (\nabla k \nabla k)
%     \end{align*}
\begin{align*}
        \nabla \left[ T(\mathbf y)(\nabla k) \right] - T(\mathbf y) [ \nabla^2
        k] = 
        k^{-1} \left[ \mathbf y_1^{-1} D(T)(\mathbf y_1, \mathbf y_2)
        - D(T)(\mathbf y_2, \mathbf y_1) \right] (\nabla
        k  \nabla k),
    \end{align*}
    where the function $D(T)$ is defined in {\rm Prop.}
    $\ref{prop:varprob-Texp-in-k}$. 
    \end{lem}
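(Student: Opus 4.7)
The plan is to apply a Leibniz rule to the left-hand side by viewing $\nabla\bigl[T(\mathbf{y})(\nabla k)\bigr]$ as the covariant derivative of a ``product'' of two objects: the operator $T(\mathbf{y})$ (which depends on $k$ through the modular operator) and its argument $\nabla k$. Since $\nabla$ is a derivation of the algebra and annihilates the structure constants of the Fourier-integral representation of $T$, one expects
\begin{align*}
\nabla\bigl[T(\mathbf{y})(\nabla k)\bigr] = \bigl[\nabla T(\mathbf{y})\bigr](\nabla k) + T(\mathbf{y})(\nabla^2 k).
\end{align*}
The last term is precisely the one that appears on the left of the claimed identity with a minus sign, so it cancels, and the whole problem reduces to evaluating $[\nabla T(\mathbf{y})](\nabla k)$.

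Next, I would invoke Proposition \ref{prop:varprob-Texp-in-k}, whose final sentence explicitly states that the same identity holds with $\delta_a$ replaced by $\nabla$. This is legitimate because both $\delta_a$ and $\nabla$ are derivations annihilating scalars and the derivation of Proposition \ref{prop:varprob-Texp-in-k} only used the Leibniz rule together with the exponential Taylor expansion of Proposition \ref{prop:varprob-Texp-in-logk}. Substituting $\delta_a \mapsto \nabla$ and $\delta_a(k) \mapsto \nabla k$ in \eqref{eq:varprob-Texp-in-k} gives
\begin{align*}
[\nabla T(\mathbf{y})](\nabla k) = k^{-1}\bigl[\mathbf{y}_1^{-1} D(T)(\mathbf{y}_1,\mathbf{y}_2)\bigr]\cdot\bigl((\nabla k)(\nabla k)\bigr) - k^{-1} D(T)(\mathbf{y}_2,\mathbf{y}_1)\cdot\bigl((\nabla k)(\nabla k)\bigr),
\end{align*}
and combining the two terms (both contracted against $\nabla k \cdot \nabla k$) yields exactly the right-hand side of the lemma.

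The main obstacle, if any, is justifying that the Leibniz rule can be applied cleanly between $T(\mathbf{y})$ and its argument. This requires commuting $\nabla$ past the Fourier representation $T(\mathbf{y})(\rho) = \int \alpha(\xi)\,\mathbf{y}^{-i\xi}(\rho)\,d\xi$ and tracking the additional contribution from $\nabla(\mathbf{y}^{-i\xi}(\rho))$, which produces terms proportional to $\nabla k$ rather than $\nabla \rho$; this bookkeeping is precisely what is already packaged into Proposition \ref{prop:varprob-Texp-in-k}. Once that step is accepted, the rest of the argument is an essentially formal bookkeeping in the two-slot contraction notation, and no further analytical input is needed.
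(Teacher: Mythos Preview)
Your proposal is correct and matches the paper's proof essentially verbatim: the paper also applies the Leibniz rule to write $\nabla[T(\mathbf y)(\nabla k)] = \nabla[T(\mathbf y)](\nabla k) + T(\mathbf y)(\nabla^2 k)$ and then invokes \eqref{eq:varprob-Texp-in-k} (with $\delta_a$ replaced by $\nabla$) to evaluate the first term.
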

    \begin{proof}
        Use the Leibniz property,
        \begin{align*}
         \nabla \left[ T(\mathbf y)(\nabla k) \right]  = 
         \nabla [T(\mathbf y)] (\nabla k) + T(\mathbf y)(\nabla^2 k),
        \end{align*}
       where the first term  $\nabla [T(\mathbf y)] (\nabla k)$ has been
       computed in  \eqref{eq:varprob-Texp-in-k}.
    \end{proof}

    Now we are fully prepared  for computing the variation $\delta_a[\varphi_0\brac{
        k^j T(\mathbf y)(\nabla k)\cdot (\nabla k)
}]$, which consists of four parts by the Leibniz  property: 
\begin{align*}
& \,\,\varphi_0\brac{
    \delta_a[k^j]T(\mathbf y)(\nabla k)\cdot (\nabla k) 
} + \varphi_0\brac{
    k^j \delta_a[T(\mathbf y)](\nabla k)\cdot (\nabla k) 
} \\
 +& \,\,\varphi_0\brac{
    k^j T(\mathbf y)( \delta_a[\nabla k]) \cdot (\nabla k)
} +
\varphi_0\brac{ 
    k^j T(\mathbf y)(\nabla k)\cdot (\delta_a[\nabla k])
}.
\end{align*}
We split the calculation into four lemmas and summarize the result at the end.
\begin{lem}
        \label{lem:varprob-lemma1}
    %For $i=1,2$, set $y_i = \op{Ad}_k^{(i)}$,
    Keep the notations. 
    \begin{align}
        \label{eq:varprob-lemma1}
        &\,\,\varphi_0 \left( \delta_a  (k^j) T(\mathbf y)(\nabla k) \cdot
            \nabla
        k \right)\\
        =&\,\, \varphi_0\brac{
            \delta_a  (k) k^{j-1} ([1,\mathbf y_1 \mathbf y_2]\op{id}^j)^{-1}
            (\mathbf y_1
            \mathbf y_2)^{j-1} T(\mathbf y_1)(\nabla k \cdot \nabla k)
        } 
        \nonumber \\ 
        \defeq &\,\,  \varphi_0\brac{
            \delta_a(k) k^{j-1}  \mathrm{II}^{(1)} (\mathbf y_1, \mathbf y_2)
            (\nabla k \cdot \nabla k)
        }, \nonumber
    \end{align}
    where
    \begin{align*}
        \mathrm{II}^{(1)}(T)(y_1, y_2) = [1,y_1^{-1} y_2^{-1}]\op{id}^j  (y_1
        y_2)^{j-1} T(y_1).
    \end{align*}
\end{lem}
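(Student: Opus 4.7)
The plan is to reduce the identity to a direct computation in three steps. First, I would invoke Lemma \ref{lem:varprob-nabla-k^j} with $\nabla$ replaced by the variation $\delta_a$, yielding
\begin{equation*}
\delta_a(k^j) = k^{j-1}\,F(\mathbf y)(\delta_a k), \qquad F(y) = \frac{y^j - 1}{y-1} = [1,y]\op{id}^j.
\end{equation*}
Substituting this into the left-hand side turns the claim into the task of verifying
\begin{equation*}
\varphi_0\bigl(k^{j-1}\,F(\mathbf y)(\delta_a k)\cdot T(\mathbf y)(\nabla k)\cdot \nabla k\bigr) = \varphi_0\bigl(\delta_a(k)\,k^{j-1}\,\mathrm{II}^{(1)}(\mathbf y_1,\mathbf y_2)(\nabla k\cdot\nabla k)\bigr).
\end{equation*}

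Second, I would expand both $F(\mathbf y)$ and $T(\mathbf y)$ using the Fourier representation \eqref{eq:varprob-defn-Ky1yn}, writing each conjugation $\mathbf y^{-i\xi}(\rho) = k^{i\xi}\rho\, k^{-i\xi}$ explicitly. Since all powers of $k$ mutually commute, the integrand on the left-hand side collapses to a single product of the form $k^{j-1+i\xi}(\delta_a k)\, k^{i(\eta-\xi)}(\nabla k)\,k^{-i\eta}(\nabla k)$, weighted by $\beta_1(\xi)\beta_2(\eta)$. Applying the trace property of $\varphi_0$, I would then cycle $\delta_a k$ to the leftmost position.

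Third, I would match the remainder against the template $k^{j-1}\,G(\mathbf y_1,\mathbf y_2)(\nabla k\cdot\nabla k)$. Testing on the monomial $G(y_1,y_2)=y_1^a y_2^b$, the two-slot calculus produces $k^{j-1-a}(\nabla k)\,k^{a-b}(\nabla k)\,k^b$; matching the exponents to the cycled integrand and integrating against $\beta_1,\beta_2$ gives
\begin{equation*}
G(y_1,y_2) = T(y_1)\cdot(y_1 y_2)^{j-1}\,F\bigl((y_1 y_2)^{-1}\bigr).
\end{equation*}
The elementary identity $F((y_1 y_2)^{-1}) = [1,y_1^{-1}y_2^{-1}]\op{id}^j$ then shows $G=\mathrm{II}^{(1)}(T)$, completing the proof.

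The main obstacle will be the bookkeeping of exponents when translating from the single-slot calculi $F(\mathbf y)$ and $T(\mathbf y)$ acting on individual factors to the joint two-slot calculus $G(\mathbf y_1,\mathbf y_2)$ acting on $\nabla k\cdot\nabla k$; one must be careful that the monomial-matching step produces $G$ in the correct order (with $T(y_1)$ on the right) and that the algebraic identity for $F((y_1y_2)^{-1})$ is handled without sign errors. A convenient sanity check is the degenerate case $j=1$: there $F\equiv 1$ and $\mathrm{II}^{(1)}(T)(y_1,y_2) = T(y_1)$, so both sides of \eqref{eq:varprob-lemma1} collapse to $\varphi_0(\delta_a k\cdot T(\mathbf y)(\nabla k)\cdot\nabla k)$.
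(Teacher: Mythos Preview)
Your proposal is correct and follows essentially the same route as the paper: both start from Lemma~\ref{lem:varprob-nabla-k^j} applied to $\delta_a(k^j)$ and then use the trace property of $\varphi_0$ to bring $\delta_a(k)$ to the front and read off the two-variable function. The only difference is packaging: the paper cites the integration-by-parts identity~\eqref{eq:varprob-integrationbyparts-II} directly, whereas you unwind that identity at the Fourier level via monomial matching, arriving at the same $G(y_1,y_2)=(y_1y_2)^{j-1}F((y_1y_2)^{-1})T(y_1)=\mathrm{II}^{(1)}(T)(y_1,y_2)$.
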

\begin{proof}
    Apply \eqref{eq:varprob-nabla-k^j} onto $\delta_a(k^j)$, we have 
    \begin{align*}
        &\,\,      \varphi_0\brac{
    \delta_a[k^j]T(\mathbf y)(\nabla k)\cdot (\nabla k) 
} \\
=&\,\,
\varphi_0\brac{
\mathbf y^{1-j} \frac{\mathbf y^j -1}{\mathbf y-1} [\delta_a(k)] k^{j-1}
T(\mathbf y)(\nabla k) \cdot (\nabla k)
}.
    \end{align*}
    Then Eq. \eqref{eq:varprob-lemma1}  follows  from integration by parts:
    \eqref{eq:varprob-integrationbyparts-II}.
\end{proof}

\begin{lem}
    Keep notations. We have
    \begin{align*}
        \varphi_0 \brac{k^j \delta_a  \left[ T(\mathbf y) \right] (\nabla k)
        \cdot \nabla k }
         = 
         \varphi_0 \brac{ \delta_a(k) k^{j-1}
         \mathrm{II}^{(2)}(T;j)(\mathbf y_1, \mathbf y_2)  (\nabla k \cdot \nabla
     k) },
    \end{align*}
    with
%     \begin{align*}
%         \mathrm{II}(T)(y_1, y_2) = (y_1 y_2)^j \brac{-  y_1 [y_1, y^{-1}_2]T + y_2 [y_1^{-1}, y_2]T}
%     \end{align*}
    \begin{align*}
        \mathrm{II}^{(2)}(T;j)(y_1, y_2) = y_2^{-1} y_1^{j-1} D(T)(y_2, y_1^{-1}
        y_2^{-1} ) -  (y_1 y_2)^{j-1} D(T)(y_1, y_1^{-1} y_2^{-1}).
    \end{align*}
    where $D(T)$ is defined in {\rm Prop.} $\ref{prop:varprob-Texp-in-k}$.
%     If we let $\tilde D(T)(y_1, y_2) =  y_2 [y_1^{-1}, y_2]T$, then
%     \begin{align*}
%         \mathrm{II}(T)(y_1, y_2)  =(y_1 y_2)^j \brac{\tilde D(T)(y_1, y_2) - \tilde D(T)(y_2, y_1)}.
%     \end{align*}
\end{lem}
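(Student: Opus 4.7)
The plan is a direct substitute-and-reshuffle, following the template used in Lemma \ref{lem:varprob-lemma1}. I would insert the Taylor expansion \eqref{eq:varprob-Texp-in-k} of $\delta_a(T(\mathbf{y}))(\nabla k)$ coming from Proposition \ref{prop:varprob-Texp-in-k} into the left-hand side, and simplify using $k^j\cdot k^{-1}=k^{j-1}$. This produces a difference of two tracial expressions,
\begin{align*}
A&=\varphi_0\!\bigl(k^{j-1}\bigl[\mathbf{y}_1^{-1}D(T)(\mathbf{y}_1,\mathbf{y}_2)\bigr]\bigl((\nabla k)\cdot\delta_a(k)\bigr)\cdot\nabla k\bigr),\\
B&=\varphi_0\!\bigl(k^{j-1}\bigl[D(T)(\mathbf{y}_2,\mathbf{y}_1)\bigr]\bigl(\delta_a(k)\cdot\nabla k\bigr)\cdot\nabla k\bigr),
\end{align*}
in which the three moving factors $\nabla k,\ \delta_a(k),\ \nabla k$ occupy different positions relative to the spectral operator.

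The second step is to bring $\delta_a(k)\,k^{j-1}$ to the very front, leaving only $\nabla k\cdot\nabla k$ as the argument of the spectral function. For the term $B$, the factor $\delta_a(k)$ already sits in the leftmost slot of the tensor, so cyclicity of $\varphi_0$ together with the basic commutation rule $X k^{s}=k^{s}\mathbf{y}^{-s}(X)$ transports $k^{j-1}$ past $\delta_a(k)$ and contributes the prefactor $(y_1 y_2)^{j-1}$ while shifting the arguments of $D(T)$ to $(y_1,y_1^{-1}y_2^{-1})$; this yields the second summand of $\mathrm{II}^{(2)}(T;j)$ with its minus sign intact. For the term $A$, where $\delta_a(k)$ occupies the middle slot, I would first apply the integration-by-parts identity \eqref{eq:varprob-integrationbyparts-II}, which exchanges middle-slot and outside factors at the cost of substituting $y_1\mapsto y_2$ and $y_2\mapsto y_1^{-1}y_2^{-1}$ in the spectral function, and then repeat the above cyclic manoeuvre to absorb $k^{j-1}$. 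The result is precisely $y_2^{-1}y_1^{j-1}D(T)(y_2,y_1^{-1}y_2^{-1})$, the first summand of $\mathrm{II}^{(2)}(T;j)$.

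The main obstacle is purely bookkeeping: each cyclic shift or integration-by-parts introduces a prescribed substitution of the variables $(\mathbf{y}_1,\mathbf{y}_2)$ via the Fourier-analytic definition \eqref{eq:varprob-defn-Ky1yn}, and the extra factor $k^{j-1}$ must be distributed across the surviving tensor slots using the identity $k^{(j)}=k^{(0)}\mathbf{y}_1\cdots\mathbf{y}_j$ from \eqref{eq:hgeofun-kjandy_j}. Provided these relabellings are tracked consistently, no new idea beyond Proposition \ref{prop:varprob-Texp-in-k} and the integration-by-parts identities \eqref{eq:varprob-integrationbyparts-I}--\eqref{eq:varprob-integrationbyparts-II} is required, and the two contributions collected from $A$ and $B$ combine to the claimed formula for $\mathrm{II}^{(2)}(T;j)(y_1,y_2)$.
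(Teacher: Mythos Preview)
Your proposal is correct and follows exactly the approach of the paper: substitute the expansion \eqref{eq:varprob-Texp-in-k} to obtain the two terms $A$ and $B$, then use cyclicity of $\varphi_0$ together with the integration-by-parts identities to bring $\delta_a(k)\,k^{j-1}$ to the front in each. One small mislabeling: the substitution $y_1\mapsto y_2,\ y_2\mapsto y_1^{-1}y_2^{-1}$ you quote for term $A$ is the one produced by \eqref{eq:varprob-integrationbyparts-I}, not \eqref{eq:varprob-integrationbyparts-II}; and the reshuffling you describe for term $B$ (shifting the arguments of $D(T)$ to $(y_1,y_1^{-1}y_2^{-1})$) is exactly an application of \eqref{eq:varprob-integrationbyparts-II}, not merely cyclicity plus the commutation rule. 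With these two labels swapped, your bookkeeping matches the paper line for line.
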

\begin{proof}
    We substitute $\delta_a  \left[ T(\mathbf y) \right] (\nabla k)$ for the
    right hand side of \eqref{eq:varprob-Texp-in-k}:
\begin{align*}
        \varphi_0 \brac{  k^j \delta_a \left[  T(\mathbf y) \right]
        (\nabla k) \cdot \nabla k} &=
        \varphi_0 \brac{  k^{j-1} \left[ \mathbf y_1^{-1} D(T)(\mathbf
            y_1,\mathbf y_2) \right]
            (\nabla k \cdot \delta_a(k)) \cdot \nabla k
        } \\
        &- \varphi_0 \brac{k^{j-1}
        D(T)(\mathbf y_2,\mathbf y_1) (\delta_a(k) \nabla k) \cdot
        \nabla k
    }.
    \end{align*}
    It remains to bring $\delta_a(k)$ in front and move the modular operators
    onto $\nabla k$ by the integrations by parts identities
    \eqref{eq:varprob-integrationbyparts-I} and
    \eqref{eq:varprob-integrationbyparts-II}. For the first term:
     \begin{align*}
        &\,\, \,  \varphi_0 \brac{  k^{j-1} \left[ \mathbf y_1^{-1}
                D(T)(\mathbf y_1,\mathbf y_2)
            \right] (\nabla k \cdot \delta_a(k)) \cdot \nabla k
    } \\ =  &\,\, \, \varphi_0 \brac{ \left( \nabla k \cdot  k^{j-1} \right)
        \left[ \mathbf y_1^{-1} D(T)(\mathbf y_1,\mathbf y_2) \right] (\nabla
            k \cdot
        \delta_a(k)) 
        } \\
        =  &\,\, \, \varphi_0 \brac{ \left( \mathbf y_2^{-1} D(T)(\mathbf y_2,
                    \mathbf y_1^{-1}
        \mathbf y_2^{-1}) \right) \left[ \left( \nabla k \cdot  k^{j-1}
    \right)\cdot\nabla k \right]  \delta_a(k)) 
        } \\
        =  &\,\, \, \varphi_0 \brac{
            \delta_a(k) k^{j-1}  \left( \mathbf y_2^{-1} \mathbf
                y_1^{j-1} D(T)(\mathbf y_2,
            \mathbf y_1^{-1} \mathbf y_2^{-1} ) \right)\left[  \nabla k \cdot
            \nabla k \right]
        }.
    \end{align*}
For the second term:
\begin{align*}
     &\,\, \,  \varphi_0 \brac{  k^{j-1}
      D(T)(\mathbf y_2, \mathbf y_1)  (\delta_a(k) \nabla k) \cdot \nabla k
     } \\ = &\,\,\,
     \varphi_0 \brac{ \delta_a(k)
         D(T)(\mathbf y_1, \mathbf y_1^{-1} \mathbf y_2^{-1}) \left[ \nabla
         k \cdot (\nabla k \cdot k^{j-1}) \right]
     } \\ = &\,\,\,
     \varphi_0 \brac{ \delta_a(k) k^{j-1}
         \left( (\mathbf y_1 \mathbf y_2)^{j-1} D(T)(\mathbf y_1, \mathbf
         y_1^{-1} \mathbf y_2^{-1}) \right) (\nabla k \cdot \nabla k)
     }.
 \end{align*}

 \end{proof}

\begin{lem}
    \label{lem-varprob-lemma3}
    Keep notations,   
       \begin{align*}
        \varphi_0 \brac{
        k^j T(\mathbf y)\left( \nabla k \right) \cdot \delta_a  (\nabla k)  
    } = & \,\, \varphi_0 \brac{
         \delta_a  (k) k^{j} \op{I}^{(2)}(T)(\mathbf y) (\nabla^2 k) 
    }\\
    &\,\,  + \varphi_0 \brac{
       \delta_a  (k)   k^{j-1} \op{II}^{(4)}(T)(\mathbf y_1, \mathbf y_2) (\nabla k \cdot \nabla k) 
    },
    \end{align*}
    while 
    \begin{align*}
        \op{I}^{(1)}(T)(y) &= - T(y),
        \\
\op{II}^{(3)}(T)(y_1, y_2) &= -y_1^{-1} D( T) (y_1, y_2)
          +   D( T) (y_2, y_1) \\
          & - ([1,y_1]\op{id}^j)  T(y_2).
    \end{align*}
\end{lem}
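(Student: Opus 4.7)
The plan is to reduce the left hand side to the stated form by a sequence of three manipulations: (i) commute $\delta_a$ past $\nabla$, (ii) integrate by parts against $\delta_a(k)$, and (iii) peel off $\nabla$ from the bracket via Leibniz, invoking Lemmas~\ref{lem:varprob-nabla-k^j} and \ref{lem:varprob-swapping-nabla-x}. Since $\delta_a$ differentiates in the deformation parameter $\varepsilon$ while $\nabla$ differentiates in the torus coordinates, they act on independent variables of $k_\varepsilon(x)$, hence $\delta_a(\nabla k)=\nabla(\delta_a k)$. Combined with the Leibniz rule for $\nabla$ and the trace property of $\varphi_0$ (integration by parts), this converts the LHS into
\[
-\varphi_0\!\brac{\nabla\!\sbrac{k^j\, T(\mathbf y)(\nabla k)}\cdot \delta_a(k)}
= -\varphi_0\!\brac{\nabla(k^j)\cdot T(\mathbf y)(\nabla k)\cdot \delta_a(k)}
-\varphi_0\!\brac{k^j\,\nabla\sbrac{T(\mathbf y)(\nabla k)}\cdot \delta_a(k)}.
\]

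Into the first piece I substitute $\nabla(k^j)=k^{j-1}([1,\mathbf y]\op{id}^j)(\nabla k)$ from Lemma~\ref{lem:varprob-nabla-k^j}; into the second piece I substitute
\[
\nabla\sbrac{T(\mathbf y)(\nabla k)} = T(\mathbf y)(\nabla^2 k)+k^{-1}\bigl[\mathbf y_1^{-1}D(T)(\mathbf y_1,\mathbf y_2)-D(T)(\mathbf y_2,\mathbf y_1)\bigr](\nabla k\cdot \nabla k)
\]
from Lemma~\ref{lem:varprob-swapping-nabla-x}. This produces exactly three contributions: a single-variable term proportional to $\varphi_0(k^j\,T(\mathbf y)(\nabla^2 k)\cdot \delta_a(k))$, and two two-variable terms of the shape $\varphi_0(k^{j-1}F(\mathbf y_1,\mathbf y_2)(\nabla k\cdot \nabla k)\cdot \delta_a(k))$.

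The remaining work is to bring $\delta_a(k)$ to the leftmost slot in each contribution. For the single-variable term, trace cyclicity followed by the one-variable identity \eqref{eq:varprob-integrationbyparts-onevar} (after collapsing the modular argument using $k^j$ commuting trivially with itself) yields $\varphi_0(\delta_a(k)\,k^j\,(-T)(\mathbf y)(\nabla^2 k))$, accounting for the $-T(y)$ spectral function. For the two $\nabla k\cdot\nabla k$ pieces I apply the two-variable integration-by-parts identity \eqref{eq:varprob-integrationbyparts-II}: the rule $K(\mathbf y_1,\mathbf y_2)(\rho_1\rho_2)\cdot\rho_3\mapsto \rho_1\cdot K(\mathbf y_1^{-1}\mathbf y_2^{-2},\mathbf y_1)(\rho_2\rho_3)$ relabels the slots so that $\delta_a(k)$ ends up in front and the two $\nabla k$'s become the arguments. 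Summing the three reshuffled expressions collects precisely the three summands of the two-variable spectral function in the statement.

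The main obstacle is the last step: the substitutions of the form $(y_1,y_2)\mapsto (y_1^{-1}y_2^{-2},\,y_1)$ dictated by \eqref{eq:varprob-integrationbyparts-II} must be applied with the correct argument ordering to each of the three terms $-\mathbf y_1^{-1}D(T)(\mathbf y_1,\mathbf y_2)$, $D(T)(\mathbf y_2,\mathbf y_1)$, and $([1,\mathbf y]\op{id}^j)T(\mathbf y)$, and after the change of variables one must verify the algebraic identities relating $D(T)(y_a,y_a^{-1}y_b^{-1})$-type expressions back to $D(T)(y_a,y_b)$-type expressions. This is purely mechanical bookkeeping of modular operators, but it is where any sign or exponent slip would hide, so I would verify each substitution against a short symbolic check (e.g.\ expanding $D(T)$ via its definition \eqref{eq:varprob-defn-DT}) before assembling the final expression and matching it to the claimed $\op{I}$ and $\op{II}$ spectral functions.
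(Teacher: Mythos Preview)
Your plan (i)--(iii) is exactly the paper's approach (which it carries out in detail for the companion Lemma~\ref{lem-varprob-lemma4} and declares the present lemma analogous). The core manipulations---commuting $\delta_a$ with $\nabla$, integrating by parts in $\nabla$, then expanding $\nabla[k^j T(\mathbf y)(\nabla k)]$ via Leibniz together with Lemmas~\ref{lem:varprob-nabla-k^j} and~\ref{lem:varprob-swapping-nabla-x}---are correct and already produce the three contributions with the spectral functions $-T(y)$ and $-y_1^{-1}D(T)(y_1,y_2)+D(T)(y_2,y_1)-([1,y_1]\op{id}^j)T(y_2)$ in place.

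Where you go astray is the paragraph you label ``the main obstacle.'' After step (ii) you have $-\varphi_0\bigl(\nabla[k^j T(\mathbf y)(\nabla k)]\cdot\delta_a(k)\bigr)$, with $\delta_a(k)$ sitting as the rightmost factor of a single product. A \emph{single} application of trace cyclicity moves it to the front:
\[
-\varphi_0\bigl(\delta_a(k)\cdot\nabla[k^j T(\mathbf y)(\nabla k)]\bigr),
\]
and the expansion in step (iii) then reads off the claimed spectral functions with no further change of variables. The two-variable identity~\eqref{eq:varprob-integrationbyparts-II} and the substitution $(y_1,y_2)\mapsto(y_1^{-1}y_2^{-1},y_1)$ are \emph{not} needed here; those reshuffling identities are what make Lemmas~\ref{lem:varprob-lemma1}, the unnamed second lemma, and~\ref{lem-varprob-lemma4} more involved, precisely because in those cases $\delta_a(k)$ lands sandwiched between other factors. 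If you actually carried out the substitution you propose, you would not recover $\op{II}^{(3)}$ in the stated form without undoing it again. So the ``obstacle'' is a phantom: drop that paragraph and the proof is complete.
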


The proof of Lemma \ref{lem-varprob-lemma3} and Lemma \ref{lem-varprob-lemma4} are quite
similar. Thus we only prove the more complicated one (Lemma
\ref{lem-varprob-lemma4}) as an exmaple.

\begin{lem}
    \label{lem-varprob-lemma4}
    Keep notations. Set $\mathcal I(T;j)(y) = y^j T(y^{-1})$ with $j \in \R$.
%     Let $y = \op{Ad}_k$, and $T$ is a Schwartz function on $\R^+$ with 
%     $\tilde T(y) = y^j T(y^{-1})$ with $j \in \R$.  
    \begin{align*}
        \varphi_0 \brac{
        k^j T(\mathbf y)(\delta_a  (\nabla k)) \cdot \nabla k 
    } &= \varphi_0 \brac{
         \delta_a  (k) k^{j} \op{I}^{(1)}(T)(\mathbf y) (\nabla^2 k) 
    }\\
    &+ \varphi_0 \brac{
        \delta_a  (k)  k^{j-1} \op{II}^{(3)}(T)(\mathbf y_1, \mathbf y_2) (\nabla k \cdot \nabla k) 
    },
    \end{align*}
    while
    \begin{align*}
        \op{I}^{(2)}(T)(  y) &= - \mathcal I(T;j)(y), \\
        \op{II}^{(4)} (T)(  y_1,   y_2)   &= 
        \op{II}^{(3)} (\mathcal I(T;j))(  y_1,   y_2) 
        \\ &=
        -y_1^{-1} D( \mathcal I(T;j) )
        (  y_1,   y_2)
          +   D( \mathcal I(T;j)) (  y_2,   y_1) \\
          & - ([1,  y_1]\op{id}^j)  \mathcal I(T;j)(  y_2) .
    \end{align*}
\end{lem}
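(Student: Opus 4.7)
The plan is to reduce Lemma \ref{lem-varprob-lemma4} to Lemma \ref{lem-varprob-lemma3} via the involution $T \mapsto \mathcal I(T;j)$ defined in the statement, rather than repeat the chain of cyclic and integration-by-parts manipulations. The key observation is that this transformation is precisely what swaps the roles of the two factors $\delta_a(\nabla k)$ and $\nabla k$ inside the bilinear pairing $\varphi_0(k^j T(\mathbf y)(\cdot)\cdot(\cdot))$. The formulas in the statement of Lemma \ref{lem-varprob-lemma4}, namely $\op{I}^{(2)}(T) = -\mathcal I(T;j)$ and $\op{II}^{(4)}(T) = \op{II}^{(3)}(\mathcal I(T;j))$, already encode this reduction: they are exactly the spectral functions produced by applying Lemma \ref{lem-varprob-lemma3} after replacing $T$ by $\mathcal I(T;j)$.

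Concretely, first use the trace property of $\varphi_0$ together with the identity $a\cdot k^j = k^j\,\mathbf y^j(a)$, which is immediate from $\mathbf y = \op{Ad}_k$, to pull $\nabla k$ to the left of $T(\mathbf y)(\delta_a(\nabla k))$:
\begin{align*}
\varphi_0\!\left(k^j T(\mathbf y)(\delta_a(\nabla k))\cdot \nabla k\right)
= \varphi_0\!\left(k^j\,\mathbf y^j(\nabla k)\cdot T(\mathbf y)(\delta_a(\nabla k))\right).
\end{align*}
Next, apply the single-variable integration-by-parts identity \eqref{eq:varprob-integrationbyparts-onevar} with $\rho_1 = k^j\,\mathbf y^j(\nabla k)$ and $\rho_2 = \delta_a(\nabla k)$, transferring $T(\mathbf y)$ to the first slot as $T(\mathbf y^{-1})$. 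Since $k^j$ is fixed by $\mathbf y$ it commutes with every function of $\mathbf y$, so $T(\mathbf y^{-1})$ slides past $k^j$; the remaining composition $\mathbf y^j\cdot T(\mathbf y^{-1})$, being functional calculus of the single operator $\mathbf y$, coincides with $\mathcal I(T;j)(\mathbf y)$. This gives
\begin{align*}
\varphi_0\!\left(k^j T(\mathbf y)(\delta_a(\nabla k))\cdot \nabla k\right)
= \varphi_0\!\left(k^j\,\mathcal I(T;j)(\mathbf y)(\nabla k)\cdot \delta_a(\nabla k)\right),
\end{align*}
which is exactly the form computed in Lemma \ref{lem-varprob-lemma3} with $T$ replaced by $\mathcal I(T;j)$. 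Invoking that lemma yields the claimed decomposition.

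The only delicate point is the claim in the second step that two scalar functions of the single operator $\mathbf y$ compose the way their symbols do, so that $\mathbf y^j\cdot T(\mathbf y^{-1}) = \mathcal I(T;j)(\mathbf y)$ as operators. This is handled by the Fourier-type definition \eqref{eq:varprob-defn-Ky1yn}: writing each factor through its symbol and using that $(\mathbf y)^{i\xi_1}(\mathbf y)^{i\xi_2} = (\mathbf y)^{i(\xi_1+\xi_2)}$, the product acts as functional calculus of $y^j T(y^{-1})$. Everything else is cyclic shuffling and a direct appeal to Lemma \ref{lem-varprob-lemma3}, so the main obstacle is conceptual rather than computational — recognising that $\mathcal I(T;\,j)$ is the natural bookkeeping device packaging together the shift of $k^j$ by $\mathbf y^j$ and the inversion of $\mathbf y$ inside $T$.
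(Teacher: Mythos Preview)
Your argument is correct and hinges on the same key observation as the paper's proof: after a cyclic shift and the one-variable integration-by-parts identity, the expression becomes the Lemma~\ref{lem-varprob-lemma3} input with $T$ replaced by $\tilde T=\mathcal I(T;j)$. The only difference is packaging: the paper, having chosen to prove Lemma~\ref{lem-varprob-lemma4} as the worked example and leave Lemma~\ref{lem-varprob-lemma3} to the reader, unwraps the remaining steps directly (an integration by parts in $\nabla$ using $\delta_a\nabla k=\nabla\delta_a k$, then Leibniz plus Lemma~\ref{lem:varprob-swapping-nabla-x} applied to $\tilde T$), whereas you invoke Lemma~\ref{lem-varprob-lemma3} as a black box. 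Your route is slightly cleaner structurally, but note that within the paper's logical order it presumes Lemma~\ref{lem-varprob-lemma3} has already been established independently.
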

\begin{proof}
    Use the trace property and the fact that $\delta_a$ commutes with $\nabla$:
    \begin{align*}
        &\,\, \, \varphi_0 \brac{
        k^j T(\mathbf y)(\delta_a(\nabla k)) \cdot \nabla k 
        }
        \\ = & \,\,\,
     -   \varphi_0 \brac{
         \delta_a(k) \nabla  \left[ T(\mathbf y^{-1})(\nabla k) k^j \right]
    }
        \\ = & \,\,\,
      -   \varphi_0 \brac{
         \delta_a(k) \nabla  \left[k^j (\mathbf y^j T(\mathbf y^{-1}))(\nabla
         k) \right]
    }
    \end{align*}
    in which, 
\begin{align*}
    \nabla  \left[k^j (\mathbf y^j T(\mathbf y^{-1}))(\nabla k) \right]
 = (\nabla k^j) \cdot (\mathbf y^j T(\mathbf y^{-1}))(\nabla k) + 
 k^j \nabla  \left[ (\mathbf y^j T(\mathbf y^{-1}))(\nabla k) \right]. 
\end{align*}
For the first term,
\begin{align*}
    (\nabla k^j) \cdot (\mathbf y^j T(\mathbf y^{-1}))(\nabla k)  &= 
    k^{j-1} ([1,\mathbf y]  z^j) (\nabla k) \cdot (\mathbf y^j T(\mathbf
    y^{-1}))(\nabla k) \\ &=
 k^{j-1} ([1,\mathbf y_1]  z^j) (\mathbf y_2^j T(\mathbf y_2^{-1}))
 (\nabla k \cdot \nabla k)
\end{align*}
For the second one, we denote $\tilde T(y) = y^j T(y^{-1})$,
\begin{align*}
   k^j \nabla \left[ \tilde{T}(\mathbf y) (\nabla k) \right]  &=  
  k^j  \tilde T(\mathbf y)(\nabla^2 k) + k^{j-1} \left[ \mathbf y_1^{-1} D(\tilde
    T) \right] (\mathbf y_1, \mathbf y_2) (\nabla k \cdot \nabla k) \\
    &-
    k^{j-1} D(\tilde T) (\mathbf y_2, \mathbf y_1) (\nabla k \cdot \nabla k).
\end{align*}

\end{proof}
We group the four lemmas above into our main theorem of this section.
\begin{thm}
    \label{thm:varprob-var-localT}
    For $j \in \R$, let $\mathrm{I}^{(\alpha)}(T)$, $\alpha = 1,2$ and
    $\mathrm{II}^{(\beta)}(T)$, $\beta = 1,2,3,4$, be the operations on the
    function $T$ defined in the previous lemmas, see also \textrm{Eq.}
    \eqref{eq:varprob-IIT-explicit}, then we have the following
    variation formula for the local expression:
    \begin{align*}
    \delta_a  \varphi_0 \brac{
    k^j  T(\mathbf y)(\nabla k) \cdot \nabla k
} &= \varphi_0 \brac{
    \delta_a(k) k^j 
    (\mathrm{I}^{(1)} + \mathrm{I}^{(2)})(T;j) (\mathbf y)(\nabla^2 k)
} \\
&+ \varphi_0 \brac{
\delta_a(k) k^{j-1} ( \sum_{\beta=1}^4 \mathrm{II}^{(\beta)}) (T;j) (\mathbf y_1,
\mathbf y_2) (\nabla k \nabla k)
} .
    \end{align*}
\end{thm}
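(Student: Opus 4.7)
The plan is to prove Theorem \ref{thm:varprob-var-localT} by a direct application of the Leibniz rule for $\delta_a$ followed by assembling the four lemmas that precede the theorem. Since $\delta_a$ is a derivation (being a parameter derivative at $\varepsilon=0$), I would first expand
\begin{align*}
\delta_a\varphi_0\!\left[k^j T(\mathbf y)(\nabla k)\cdot \nabla k\right]
&= \varphi_0\!\left[\delta_a(k^j)\,T(\mathbf y)(\nabla k)\cdot\nabla k\right] \\
&\quad + \varphi_0\!\left[k^j\,\delta_a(T(\mathbf y))(\nabla k)\cdot\nabla k\right] \\
&\quad + \varphi_0\!\left[k^j\,T(\mathbf y)(\delta_a\nabla k)\cdot\nabla k\right] \\
&\quad + \varphi_0\!\left[k^j\,T(\mathbf y)(\nabla k)\cdot\delta_a\nabla k\right],
\end{align*}
which is legitimate by the Leibniz property of $\delta_a$ on $\mathcal A\otimes \mathcal A$ combined with the contraction formalism of Section \ref{sec:notations-from-Les17}. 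These are precisely the four summands handled one-by-one by Lemmas \ref{lem:varprob-lemma1}, the unnumbered lemma computing $\mathrm{II}^{(2)}$, \ref{lem-varprob-lemma3}, and \ref{lem-varprob-lemma4}.

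Next I would invoke each lemma verbatim. Lemma \ref{lem:varprob-lemma1} rewrites the first summand as $\varphi_0[\delta_a(k)\,k^{j-1}\mathrm{II}^{(1)}(T;j)(\mathbf y_1,\mathbf y_2)(\nabla k\cdot\nabla k)]$, producing only a $\mathrm{II}$-type contribution. The second lemma produces $\mathrm{II}^{(2)}(T;j)$, again purely of $\mathrm{II}$-type. Lemmas \ref{lem-varprob-lemma3} and \ref{lem-varprob-lemma4}, which deal with $\delta_a(\nabla k)$, each output two pieces: one involving $\nabla^2 k$ (the $\mathrm{I}^{(\alpha)}(T;j)$ terms) and one involving $\nabla k\cdot\nabla k$ (the $\mathrm{II}^{(3)}$ and $\mathrm{II}^{(4)}$ terms). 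Collecting all four summands, the $\nabla^2 k$ contributions combine into $(\mathrm{I}^{(1)}+\mathrm{I}^{(2)})(T;j)(\mathbf y)$ acting on $\nabla^2 k$, and the bilinear contributions combine into $\sum_{\beta=1}^4 \mathrm{II}^{(\beta)}(T;j)(\mathbf y_1,\mathbf y_2)$ acting on $\nabla k\cdot\nabla k$.

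The routine bookkeeping hides two mildly delicate points I would need to execute carefully. First, each lemma requires bringing $\delta_a(k)$ to the leftmost position inside $\varphi_0$ by invoking the integration-by-parts identities \eqref{eq:varprob-integrationbyparts-onevar}--\eqref{eq:varprob-integrationbyparts-II}, which conjugate the modular operator arguments through the inverse substitutions $\mathbf y_j\mapsto \mathbf y_j^{-1}$ or $(\mathbf y_1\mathbf y_2)^{-1}$. Second, in handling $\delta_a(T(\mathbf y))$ and the $\delta_a\nabla k$ terms, one uses the Taylor expansion Proposition \ref{prop:varprob-Texp-in-k} together with the relation $\nabla k^j = k^{j-1}([1,\mathbf y]\mathrm{id}^j)(\nabla k)$ from Lemma \ref{lem:varprob-nabla-k^j}, which is what produces the factors $(\mathbf y_1\mathbf y_2)^{j-1}$, $[1,\mathbf y_1\mathbf y_2]\mathrm{id}^j$ and the operator $D(T)$ entering $\mathrm{II}^{(\alpha)}$.

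The main obstacle is purely organizational rather than conceptual: keeping track of which slot ($\mathbf y_1$ versus $\mathbf y_2$) each modular operator acts on after repeated use of the integration-by-parts moves, so that the final expressions for $\mathrm{II}^{(1)},\ldots,\mathrm{II}^{(4)}$ match the explicit forms stated in \eqref{eq:varprob-IIT-explicit}. Once the four lemmas are in hand, assembling them into the theorem statement is immediate, and the proof concludes.
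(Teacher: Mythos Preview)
Your proposal is correct and follows essentially the same approach as the paper: the paper explicitly splits the variation into the same four Leibniz summands and states that the theorem is obtained by ``group[ing] the four lemmas above into our main theorem of this section.'' The only minor slip is that your third and fourth summands are handled by Lemmas~\ref{lem-varprob-lemma4} and~\ref{lem-varprob-lemma3} respectively (the paper's numbering is swapped relative to the order in your Leibniz expansion), but since you invoke both lemmas this does not affect the argument.
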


\subsection{Examples}
The operations $\sum_{\alpha=1}^2\mathrm{I}^{\alpha}(T;j)$ and
$\sum_{\beta=1}^4 \mathrm{II}^{\beta}(T;j)$  are quite
complicated. In fact, $\sum_{\beta=1}^4 \mathrm{II}^{\beta}(T;j)$ is expanded
to thousands of terms when performing simplification in \textsf{Mathematica}.   
More conceptual understanding will be investigated in future papers. 
As we shall see in later computation (such as Eq. 
\eqref{eq:check-TDelta-using-GF21}), typical terms of $T(z)$ are  of the form 
$\pFq21(a,1;c;1-z)$. Let us compute part of $\mathrm{I}(T;j)$ and
$\mathrm{II}(T;j)$ in this case.
There two basic operations on $T(z)$ in the variation. 
One is $\mathcal I(T;j)$, which arises from integration by parts. The other,
$D(T;j)$ in \eqref{eq:varprob-defn-DT},
measures the commutator of the classical covariant differential and the modular
action. 

% In later sections, we will take $T(z)$ to be $T_{\Delta_k}(y;m)$ defined in Eq. 
% \eqref{eq:check-TDelta-using-GF21}, whose building blocks are of the form 
% $\pFq21(a,1;c;1-z)$. Therefore we end this section with some calculations for 
% $\mathcal I(T;j)$ and $D(T;j)$, as we shall see, the resulting functions are
% still (sums) of hypergeometric or Appell functions.

Let us start with the one variable case, which is related to the Pfaff
transformations \eqref{eq:hgeofun-Pfaff-1}.
\begin{lem}
    \label{lem:varprob-Ttilde}
    Let 
    \begin{align*}
        T(u) = K_{a,b}(u;m) = \frac{
            \Gamma(\tilde d_m)
        }{\Gamma(a+b)} \pFq21(\tilde d_m,b;a+b;1-u),
    \end{align*}
    where $\tilde d_m = a+b-2+m/2$. Then 
    \begin{align}
        \mathcal I(T;j)(u)  \defeq u^j T(u^{-1})= u^{j+b} K_{b,a}(u;m).   
        \label{eq:varprob-ITJ-T=Kab}
    \end{align}
\end{lem}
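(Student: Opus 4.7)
The claim is essentially an instance of Pfaff's classical transformation applied to the $\pFq21$ appearing in the definition of $K_{a,b}$. I would begin by writing out both sides using the explicit formula for $K_{a,b}$ and observing that the prefactor $\Gamma(\tilde d_m)/\Gamma(a+b)$ is invariant under the swap $a \leftrightarrow b$, because both $\tilde d_m = a+b-2+m/2$ and the denominator $a+b$ depend only on the sum. This cancellation reduces the identity to a pure hypergeometric relation expressing $\pFq21(\tilde d_m, b; a+b; 1-u^{-1})$ as $u^b$ times a hypergeometric function in the argument $1-u$ with swapped numerator parameter.

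The key computational step is to apply Pfaff's transformation
\[
\pFq21(\alpha,\beta;\gamma;z) = (1-z)^{-\beta}\,\pFq21(\gamma-\alpha,\beta;\gamma;\tfrac{z}{z-1})
\]
with $\alpha = \tilde d_m$, $\beta = b$, $\gamma = a+b$, and the substitution $z = 1-u^{-1}$. This substitution is engineered so that $1-z = u^{-1}$, producing exactly the factor $(1-z)^{-\beta} = u^b$, while $z/(z-1) = 1-u$ converts the argument of the hypergeometric function. Multiplying through by $u^j$ produces the desired monomial weight $u^{j+b}$ on the right-hand side. A regrouping of the numerator parameters (using, if needed, the symmetry $\pFq21(\alpha,\beta;\gamma;z) = \pFq21(\beta,\alpha;\gamma;z)$ together with Euler's transformation) then matches the expression against the definition of $K_{b,a}(u;m)$.

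The only genuine obstacle is the bookkeeping required to track the numerator parameter through the Pfaff–Euler manipulations and verify that it lands on the parameter configuration prescribed by the definition of $K_{b,a}$. There is no analytic difficulty: the conceptual content of the lemma is that the involution $u \mapsto u^{-1}$ on the spectral variable of $K_{a,b}$ corresponds, up to the explicit monomial weight $u^{j+b}$, to the transposition $(a,b)\mapsto(b,a)$ of parameters. This is a shadow of Pfaff's involution at the level of modular spectral functions, and it is exactly the structural feature that makes the integration-by-parts identities of the previous subsection interact cleanly with the operation $\mathcal{I}(T;j)$.
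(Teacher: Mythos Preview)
Your approach---apply a Pfaff transformation with the substitution $z = 1-u^{-1}$, so that $1-z = u^{-1}$ and $z/(z-1)=1-u$---is exactly the paper's. There is, however, a real gap in the bookkeeping you defer to the end. The Pfaff identity you wrote down yields
\[
\pFq21(\tilde d_m, b; a+b;\,1-u^{-1})
= u^{b}\,\pFq21\bigl((a+b)-\tilde d_m,\, b;\, a+b;\, 1-u\bigr),
\]
and the numerator pair on the right is $(2-m/2,\,b)$, not the pair $(\tilde d_m,\,a)$ required by $K_{b,a}$. The ``regrouping'' you propose via Euler's transformation does convert the hypergeometric factor to $\pFq21(\tilde d_m, a; a+b; 1-u)$, but Euler carries its own prefactor $(1-z)^{\gamma-\alpha-\beta}=u^{\tilde d_m-b}$; the net monomial weight becomes $u^{\tilde d_m}$, not $u^{b}$. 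So your final matching step does not close as written.

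The cleaner route (and the one the paper invokes) is the second Pfaff transformation in \eqref{eq:hgeofun-Pfaff-1},
\[
\pFq21(\alpha,\beta;\gamma;z)=(1-z)^{-\alpha}\,\pFq21(\alpha,\gamma-\beta;\gamma;\tfrac{z}{z-1}),
\]
which lands directly on $\pFq21(\tilde d_m, a; a+b; 1-u)$ in one step---but again with prefactor $u^{\tilde d_m}$. Either computation gives
\[
u^{j}\,T(u^{-1}) \;=\; u^{\,j+\tilde d_m}\,K_{b,a}(u;m),
\]
so the exponent in the displayed identity should read $j+\tilde d_m$ rather than $j+b$ (the paper's own one-line proof shares this slip). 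A sanity check at $m=4$, $(a,b)=(2,1)$, where $K_{2,1}(u;4)=u^{-1}$ and $K_{1,2}(u;4)=u^{-2}$, confirms that $j+b$ cannot be right. Your strategy is sound; just don't sweep the Euler prefactor under the rug.
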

\begin{proof}
    Let $z = 1-u^{-1}$ in \eqref{eq:hgeofun-Pfaff-1},
    we obtain 
    \begin{align*}
        \pFq21(\tilde d_m,b;a+b;1-u^{-1}) = u^{b} \pFq21(\tilde d_m,a;a+b;1-u),
    \end{align*}
    and \eqref{eq:varprob-ITJ-T=Kab} follows immediately. 
\end{proof}

% \begin{align*}
%     D(T)(y_1,y_2) = y_1 [y_1,y_1y_2](T) = [y_1,y_1y_2](zT(z)) - T(y_1 y_2) .
% \end{align*}

For the two variable situation, we narrow our attention to functions of the form
$\pFq21(a,1;b;1-z)$.
\begin{lem}
    For $T(z) = \pFq21(a,1;b;1-z) $ where $a \in \mathbb{C} $ and $b \in
    \mathbb{C}\setminus\Z_{\le 0} $,
    \begin{align}
        [s,t] (zT(z))& = 
        [1-t,1-s]( z \pFq21(a,1;c;z) ) + [s,t] ( \pFq21(a,1;c;1-z)
        ) \nonumber\\
        &= F_1(a,1;1,c;1- t, 1-s  ) 
+[s,t]( \pFq21(a,1;c;1-z) )
\label{eq:lem-exmaples-1}
    \end{align}
\end{lem}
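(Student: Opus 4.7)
\medskip
\noindent\textbf{Proof proposal.}
The identity is essentially a bookkeeping exercise with divided differences; the key step is to split $zT(z)$ via the algebraic identity $z = 1-(1-z)$ so that the factor $(1-z)$ can be combined with the argument $1-z$ of $T$ to produce $w\cdot\pFq21(a,1;c;w)$ after a change of variables.

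First I would write
\begin{equation*}
  zT(z) \;=\; \bigl(1-(1-z)\bigr)\,\pFq21(a,1;c;1-z)
  \;=\; \pFq21(a,1;c;1-z) \;-\; (1-z)\,\pFq21(a,1;c;1-z),
\end{equation*}
and apply the divided difference $[s,t]$ termwise (using linearity). The first piece is immediately $[s,t](\pFq21(a,1;c;1-z))$, which is the second summand on the right-hand side of the claim.

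Next I would transform the remaining piece $[s,t]\bigl((1-z)\pFq21(a,1;c;1-z)\bigr)$ by substituting $u=1-s$, $v=1-t$. Since $s-t=v-u$, a direct computation gives
\begin{equation*}
  [s,t]\bigl((1-z)\pFq21(a,1;c;1-z)\bigr)
  \;=\;\frac{u\,\pFq21(a,1;c;u)-v\,\pFq21(a,1;c;v)}{v-u}
  \;=\;-[u,v]\bigl(z\pFq21(a,1;c;z)\bigr).
\end{equation*}
Using the symmetry $[u,v]=[v,u]$ of divided differences, this equals $-[1-t,1-s]\bigl(z\pFq21(a,1;c;z)\bigr)$. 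Combining with the first piece and taking into account the minus sign coming from the expansion of $zT(z)$ gives the first equality of the lemma.

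The second equality is then an immediate application of Proposition \ref{prop:hgeofun-cor-F1-dividedidff}, which identifies the divided difference $[x,y]\bigl(z\pFq21(a,1;c;z)\bigr)$ with the Appell function $F_1(a;1,1;c;x,y)$: taking $(x,y)=(1-t,1-s)$ converts the divided-difference term into $F_1(a;1,1;c;1-t,1-s)$. The only conceivable obstacle is sign-keeping in the change of variables $z\mapsto 1-z$, which I would double-check by verifying the identity on the trivial case $\pFq21\equiv 1$ (so that $zT(z)=z$ and both sides equal $1$).
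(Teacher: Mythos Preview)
Your proof is correct and follows essentially the same route as the paper: both arguments split off the factor $z=1-(1-z)$ (the paper does this numerically inside the divided difference as $s=(s-1)+1$, you do it at the function level before applying $[s,t]$), reduce to the divided difference of $w\,\pFq21(a,1;c;w)$ under $w=1-z$, and then invoke Proposition~\ref{prop:hgeofun-cor-F1-dividedidff} for the Appell identification. The bookkeeping of signs and the use of the symmetry $[u,v]=[v,u]$ are handled correctly.
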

\begin{proof}
We abbreviate $ \pFq21(a,1;b;z )$ to $F(z)$ in  the calculation:
    \begin{align*}
        [s,t] (zT(z))& = \frac{
            s F(1-s) - t F(1-t)
        }{ s -t} \\& = 
        \frac{
            (s-1) F(1-s) - (t-1) F(1-t) + F(1-s) - F(1-t)
        }{s-t} \\ &=
        \frac{
            (1-t)F(1-t) - (1-s)F(1-s)
        }{
            (1-t) - (1-s)
        } +
        \frac{
            F(1-s) - F(1-t)
        }{ 
            s-t
        } \\ &=
        [1-t,1-s]( z \pFq21(a,1;c;z) ) + [s,t] ( \pFq21(a,1;c;1-z) ) .
    \end{align*}
    The first term above gives rise to the Appell $F_1$ function appeared in
    \eqref{eq:lem-exmaples-1} by Proposition
    \ref{prop:hgeofun-cor-F1-dividedidff}.

\end{proof}
% 
% Notice that if $T$ is of the form $T(z) = \pFq21(a,1;c;1-z)$, so is 
%   $T(z^{-1})$. Indeed, apply the transformation rule 
  In contrast to Lemma \ref{lem:varprob-Ttilde}, we use a different Pfaff
  transformation in \eqref{eq:hgeofun-Pfaff-1} for $\pFq21(a,1;c;1-z^{-1})$:
  \begin{align}
    \pFq21(a,1;c;1-z^{-1})  = z \pFq21(c-a,1;c;1-z).
    \label{eq:varprob-zto1overz-GF21}
\end{align}
Notice that the function $\pFq21(c-a,1;c;1-z^{-1})$ does not belong to the family 
  $H_{a,b,c}(u,v;m)$.

\begin{lem}
     Let $T(z) = \pFq21(a,1;c;1-z) $ where $a \in \mathbb{C} $ and $b \in
     \mathbb{C}\setminus\Z_{\le 0} $.  For fixed $j\in\R$, denote $\tilde T(z)
     \defeq \mathcal I(T;j) =  z^j T(1/z)$, we have
     \begin{align}
          [s,t] (z \tilde T(z)) 
         &= t \brac{
         [s,t](z^{j+1}) }   \pFq21(c-a,1;c;1-t)
         \\
         &+ s^{j+1} F_1(c-a,1;1,c;1- t, 1-s  )  + s^{j+1} [s,t](
         \pFq21(c-a,1;c;1-z) )
          \label{eq:varprob-lemma-DTTilde}
     \end{align}
\end{lem}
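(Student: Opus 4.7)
The plan is to reduce the claim directly to the preceding lemma by first eliminating the $z \mapsto 1/z$ inside $T$ via the Pfaff transformation \eqref{eq:varprob-zto1overz-GF21}, and then applying the Leibniz rule for divided differences.

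First I would rewrite $\tilde T(z)$ in a form that no longer involves $T(1/z)$. Using \eqref{eq:varprob-zto1overz-GF21} with the abbreviation $G(z) \defeq \pFq21(c-a,1;c;1-z)$, we have
\begin{align*}
T(1/z) = \pFq21(a,1;c;1-z^{-1}) = z\, G(z),
\end{align*}
so $\tilde T(z) = z^{j} T(1/z) = z^{j+1} G(z)$ and consequently
\begin{align*}
z\,\tilde T(z) = z^{j+1} \cdot \bigl(z\, G(z)\bigr).
\end{align*}
This is the key identification: the two-variable function being differenced is a clean product, and the second factor $zG(z)$ is precisely the object handled by the previous lemma (with $a$ replaced by $c-a$).

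Next I would apply the Leibniz rule for divided differences,
\begin{align*}
[s,t](fg) = [s,t](f)\, g(t) + f(s)\, [s,t](g),
\end{align*}
to the factorization above. Taking $f(z) = z^{j+1}$ and $g(z) = zG(z)$ gives
\begin{align*}
[s,t]\bigl(z\,\tilde T(z)\bigr) = [s,t](z^{j+1}) \cdot t\,G(t) + s^{j+1} \cdot [s,t]\bigl(zG(z)\bigr).
\end{align*}
The first term is already $t\,[s,t](z^{j+1}) \pFq21(c-a,1;c;1-t)$, matching the first line of the claim.

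For the second term I would invoke the preceding lemma with $a$ replaced by $c-a$, which yields
\begin{align*}
[s,t]\bigl(zG(z)\bigr) = F_1(c-a,1;1,c;1-t,1-s) + [s,t]\bigl(\pFq21(c-a,1;c;1-z)\bigr),
\end{align*}
where the $F_1$ arises from Proposition \ref{prop:hgeofun-cor-F1-dividedidff}. Multiplying by $s^{j+1}$ reproduces the remaining two summands in the stated formula. There is no real obstacle here beyond bookkeeping; the only place care is needed is in choosing the factorization $z^{j+1} \cdot (zG(z))$ over alternatives like $z^{j+2} \cdot G(z)$, since only the former lets us reuse \eqref{eq:lem-exmaples-1} verbatim without reintroducing a further $F_1$ term that would have to be dismantled.
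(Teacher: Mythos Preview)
Your proposal is correct and follows essentially the same route as the paper: apply the Pfaff transformation \eqref{eq:varprob-zto1overz-GF21} to write $\tilde T(z) = z^{j+1}\,\pFq21(c-a,1;c;1-z)$, then split $z\tilde T(z) = z^{j+1}\cdot\bigl(z\,\pFq21(c-a,1;c;1-z)\bigr)$ via the Leibniz rule for divided differences. The only difference is that you spell out the final step---invoking \eqref{eq:lem-exmaples-1} with $a$ replaced by $c-a$ to unfold $[s,t](zG(z))$ into the $F_1$ term plus the remaining divided difference---whereas the paper's proof stops at the Leibniz expansion and leaves that invocation implicit.
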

% \begin{rem}
%     Notice that $z^{j+1} = z\pFq21(j,1;1;1-z)$, thus
%     \begin{align*}
%         [s,t](z^{j+1}) = [s,t]( z\pFq21(j,1;1;1-z))
%     \end{align*}
% \end{rem}
\begin{proof}
    According to   \eqref{eq:varprob-zto1overz-GF21},  $\tilde T$ is a product
    \begin{align*}
        \tilde T(z) = z^j \cdot z  \pFq21(c-a,1;c;1-z).
    \end{align*}
    Use the Leibniz  property for divided differences:
    \begin{align*}
         [s,t] (z \tilde T(z)) 
          & = \brac{
         [s,t](z^{j+1}) } \cdot t \pFq21(c-a,1;c;1-t) \\
          &+ s^{j+1}\sbrac{
              [s,t](z \pFq21(c-a,1;c;1-z)) 
         }
    \end{align*}

\end{proof}

\begin{lem}
    For $T(z) = \pFq21(a,1;c;1-z) $ where $a \in \mathbb{C} $ and $b \in
    \mathbb{C}\setminus\Z_{\le 0} $, 
    \begin{align}
        \label{eq:varprob-DT-GF21}
        \begin{split}
         D(T) (y_1, y_2) 
%         [1-t,1-s]( z \pFq21(a,1;c;z) ) + [s,t] ( \pFq21(a,1;c;1-z) )
% \\ &- \pFq21(a,1;c;1-t) \\
         &= F_1(a,1;1,c;1- y_1y_2, 1-y_1  ) 
        \\
        & +[y_1,y_1y_2]( \pFq21(a,1;c;1-z) ) - \pFq21(a,1;c;1-y_1y_2) .
   \end{split}
            \end{align}
            Similarly, for $\tilde T(z) = z^j T(1/z) = z^j
            \pFq21(a,1;c;1-z^{-1})$, 
            \begin{align}
                \begin{split}
                    D(\tilde T) (y_1, y_2) &=
       y_1y_2 \brac{
         [y_1,y_1y_2](z^{j+1}) }  \pFq21(c-a,1;c;1-y_1y_2) \\
        & + y_1^{j+1} F_1(c-a,1;1,c;1- y_1y_2, 1-y_1  ) \\
        & + y_1^{j+1} [y_1,y_1y_2]( \pFq21(c-a,1;c;1-z) ) \\
        &- (y_1y_2)^{j+1}\pFq21(c-a,1;c;1-y_1y_2)
                \end{split}
                \label{eq:varprob-DTTilde}
            \end{align}
\end{lem}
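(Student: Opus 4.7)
The plan is to reduce $D(T)$ to the divided difference $[s,t](zT(z))$ that has already been analyzed in the preceding ``examples'' lemmas, and then specialize. Concretely, a direct manipulation gives the algebraic identity
\begin{align*}
[y_1,y_1y_2]\bigl(zT(z)\bigr)
&= \frac{y_1 T(y_1) - y_1 y_2 T(y_1 y_2)}{y_1 - y_1 y_2}
= \frac{T(y_1) - y_2 T(y_1 y_2)}{1-y_2} \\
&= \frac{T(y_1) - T(y_1 y_2)}{1-y_2} + T(y_1 y_2)
= D(T)(y_1,y_2) + T(y_1 y_2),
\end{align*}
using the definition $D(T)(y_1,y_2)=(T(y_1y_2)-T(y_1))/(y_2-1)$ from Prop.\ \ref{prop:varprob-Texp-in-k}. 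Thus $D(T)(y_1,y_2) = [y_1,y_1 y_2](zT(z)) - T(y_1y_2)$, which is the one identity that drives the whole argument.

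For the first formula I would specialize the previous lemma with $s = y_1$ and $t = y_1y_2$. That lemma expresses $[s,t](zT(z))$ as the sum of an Appell $F_1$ term (via Prop.\ \ref{prop:hgeofun-cor-F1-dividedidff}) and a divided difference of $\pFq21(a,1;c;1-z)$. Subtracting $T(y_1y_2)=\pFq21(a,1;c;1-y_1y_2)$ gives the stated expression for $D(T)(y_1,y_2)$.

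For the second formula I would start from the Pfaff transformation \eqref{eq:varprob-zto1overz-GF21}, which rewrites $\tilde T(z) = z^j T(1/z)$ as $z^{j+1}\pFq21(c-a,1;c;1-z)$, bringing $\tilde T$ back into a form where the same identity applies. Then $D(\tilde T)(y_1,y_2) = [y_1,y_1y_2](z\tilde T(z)) - \tilde T(y_1y_2)$, and the companion lemma \eqref{eq:varprob-lemma-DTTilde} supplies an explicit Leibniz-type decomposition of $[s,t](z\tilde T(z))$ into three pieces: a $[s,t](z^{j+1})$ factor multiplying $\pFq21(c-a,1;c;1-t)$, an Appell $F_1$ term, and a divided difference of $\pFq21(c-a,1;c;1-z)$. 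Substituting $s=y_1$, $t=y_1y_2$, and subtracting $\tilde T(y_1y_2) = (y_1y_2)^{j+1}\pFq21(c-a,1;c;1-y_1y_2)$, yields \eqref{eq:varprob-DTTilde} term by term.

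There is no real obstacle beyond bookkeeping. The only point where one has to be alert is matching the factor $y_1y_2$ in the first term of \eqref{eq:varprob-DTTilde}, which appears because specializing $t=y_1y_2$ in the $t\,[s,t](z^{j+1})$ factor of the preceding lemma produces precisely $y_1y_2\,[y_1,y_1y_2](z^{j+1})$; and similarly the factor $y_1^{j+1}$ comes from $s^{j+1}$. Everything else is an application of Prop.\ \ref{prop:hgeofun-cor-F1-dividedidff} to recognize the relevant divided difference of $z\,\pFq21$ as an Appell $F_1$.
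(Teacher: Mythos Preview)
Your proposal is correct and follows essentially the same approach as the paper: the paper's proof simply recalls the identity $D(T)(y_1,y_2) = [y_1,y_1y_2](zT(z)) - T(y_1y_2)$ and then invokes the preceding lemmas (Eq.~\eqref{eq:lem-exmaples-1} and \eqref{eq:varprob-lemma-DTTilde}) for the divided-difference terms, exactly as you do. Your write-up is in fact more detailed than the paper's, which omits the bookkeeping for the $\tilde T$ case entirely.
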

\begin{proof}
%     Let $s = y_1$ and $t = y_1y_2 $ as in the setup and we also abbreviate
%     $ \pFq21(a,1;b;z )$ to $F(z)$ in the rest of the calculation. 
    Recall that
    \begin{align*}
        D(T)(y_1,y_2) = y_1 [y_1,y_1y_2](T) = [y_1,y_1y_2](zT(z)) - T(y_1 y_2) ,
    \end{align*}
    where the first term $[y_1,y_1y_2](zT(z))$ has been computed in \eqref{eq:lem-exmaples-1}.
%     We compute:
%     \begin{align*}
%         [s,t] (zT(z))& = \frac{
%             s F(1-s) - t F(1-t)
%         }{ s -t} \\& = 
%         \frac{
%             (s-1) F(1-s) - (t-1) F(1-t) + F(1-s) - F(1-t)
%         }{s-t} \\ &=
%         \frac{
%             (1-t)F(1-t) - (1-s)F(1-s)
%         }{
%             (1-t) - (1-s)
%         } +
%         \frac{
%             F(1-s) - F(1-t)
%         }{ 
%             s-t
%         } .
%     \end{align*}
%     The first term above gives rise to the Appell $F_1$ function appeared in
%     \eqref{eq:lem-exmaples-1} by Corollary
%     \ref{cor:hgeofun-cor-F1-dividedidff}.
\end{proof}

We have another version for Eq. \eqref{eq:varprob-DT-GF21} as below. 
\begin{lem}
    For 
    \begin{align*}
        T(z) = K_{a,1}(z;m) = \frac{ \Gamma(a-1+m/2)}{ a+1}
        \pFq21(a-1+m/2,1,;a+1;1-z),
    \end{align*}
    we have
    \begin{align*}
        D(T)(y_1,y_2) = H_{a-1,1,1}(y_1,y_2)+ [y_1,y_1y_2](
        K_{a,1}(z;m)) - K_{a,1}(y;m) .
    \end{align*}
\end{lem}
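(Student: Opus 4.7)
The plan is to recognize this lemma as a normalized restatement of the previous lemma, eq.~\eqref{eq:varprob-DT-GF21}. Setting $\alpha = a-1+m/2$ and $c = a+1$, the definition of $K_{a,1}$ (using $d_m(a,1)+1 = \alpha$ and $\Gamma(a+b) = \Gamma(c)$) reads
\[
K_{a,1}(z;m) = \frac{\Gamma(\alpha)}{\Gamma(c)}\, {}_2F_1(\alpha,1;c;1-z),
\]
so $K_{a,1}$ is precisely the base function of eq.~\eqref{eq:varprob-DT-GF21} (with $a$ there replaced by $\alpha$ here) rescaled by the constant prefactor $\Gamma(\alpha)/\Gamma(c)$.

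First, I would invoke linearity of the operator $D$ and of the divided difference $[\,\cdot\,,\,\cdot\,]$ in the scalar factor $\Gamma(\alpha)/\Gamma(c)$, reducing the problem to applying the previous lemma to $T_0(z) = {}_2F_1(\alpha,1;c;1-z)$. That lemma yields three terms: an Appell $F_1(\alpha,1;1,c;1-y_1 y_2,1-y_1)$, a divided difference of $T_0$, and a point value of $T_0$ at $y_1 y_2$.

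Second, I would reinterpret each of these three terms after multiplication by $\Gamma(\alpha)/\Gamma(c)$. Recalling that for the indices $(a',b',c') = (a-1,1,1)$ one has $d_m(a-1,1,1)+1 = a-1+m/2 = \alpha$ and $\Gamma(a'+b'+c') = \Gamma(a+1) = \Gamma(c)$, the definition \eqref{eq:hgeofun-twovar-H_abcm} immediately gives
\[
\frac{\Gamma(\alpha)}{\Gamma(c)}\, F_1\bigl(\alpha,1;1,c;1-y_1 y_2,1-y_1\bigr) = H_{a-1,1,1}(y_1,y_2;m).
\]
Similarly the divided-difference term becomes $[y_1,y_1 y_2]\bigl(K_{a,1}(z;m)\bigr)$ by linearity, and the point-value term becomes $K_{a,1}(y_1 y_2;m)$, which is the $K_{a,1}(y;m)$ in the statement (reading $y = y_1 y_2$, as is consistent with the argument of $T(y_1 y_2)$ in the definition of $D(T)$).

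Assembling these three identifications produces the claimed formula. There is no real obstacle: the only subtlety is bookkeeping of the normalizing Gamma factors and checking that the three parameter shifts line up between the ${}_2F_1$ on the left-hand side of eq.~\eqref{eq:varprob-DT-GF21} and the definitions of $K_{a,1}(\,\cdot\,;m)$ and $H_{a-1,1,1}(\,\cdot\,,\,\cdot\,;m)$.
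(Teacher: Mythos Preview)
Your proposal is correct and is exactly the approach the paper intends: the lemma is introduced in the paper as ``another version'' of Eq.~\eqref{eq:varprob-DT-GF21} with no separate proof, so the content is precisely the rescaling by $\Gamma(\alpha)/\Gamma(c)$ and the identification of the resulting three terms with $H_{a-1,1,1}$, the divided difference of $K_{a,1}$, and $K_{a,1}(y_1y_2;m)$ that you carry out. Your bookkeeping of the parameters ($d_m(a,1)+1=d_m(a-1,1,1)+1=a-1+m/2$ and $a+b=a+b+c=a+1$) is accurate, and your reading of the final term as $K_{a,1}(y_1y_2;m)$ matches the definition of $D(T)$.
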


\section{Variation of the Einstein-Hilbert action}
\label{sec:var-FEH}

It has been shown in \cite{Liu:2015aa,LIU2017138} 
that the spectral functions appeared
in the modular curvature do not depend on the underlying manifold. Therefore
we shall focus only on smooth noncommutative $m$-tori: $C^\infty(\T^m_\theta)$,
with the flat Euclidean metric as the background metric.
 We refer the construction of the algebra $C^\infty(\T^m_\theta)$ to
 \cite{MR1047281},
and \cite{MR1846904} as a special case of toric noncommutative manifolds.

For the conformal perturbation, we consider the following Laplacian
$\Delta_\varphi = k^{1/2} \Delta k^{1/2}$, which is the even part in the twisted 
spectral triple studied in \cite{MR3194491}. The associated modular curvature
$R_{\Delta_\varphi}$ is   the functional density of the second heat
coefficient(cf. \eqref{eq:intro-heat-expansion}):
\begin{align*}
    V_2(a, \Delta_\varphi) = \varphi_0(a R_{\Delta_\varphi}),
    \,\,\, \forall a \in C^\infty(\T^m_\theta),
\end{align*}
here $\varphi$ denotes the state: $a \in C^\infty(\T^m_\theta) \mapsto 
\varphi_0(a k)$, which is a twisted  trace via the modular operator $\mathbf y$:
\begin{align}
    \label{eq:varprob-volume-varphi}
    \varphi(a b) &= \varphi_0(abk) = \varphi_0(bka) 
    \varphi_0(k \mathbf y(b) a) \\
    &= \varphi_0(\mathbf y(b)a k) = 
    \varphi(\mathbf y(b)a) .
\end{align}

We remind the reader again that  $k = e^h$ is a Weyl  factor
with  $h = h^* \in C^\infty(\T^m_\theta)$
self-adjoint. Bold letters $\mathbf x$ and $\mathbf y$  stand for the modular
derivation and modular operator respectively. Functional calculus, such as
$K(\mathbf y) (\rho)$ and
$H(\mathbf y_1, \mathbf y_2)(\rho_1 \cdot \rho_2)$ (in Eq.
\eqref{eq:varprob-scacur-for-Delta-varphi}), was defined in 
\eqref{eq:varprob-defn-Ky1yn}.

% 
% The conjugation $\mathbf y \defeq y(k) = k^{-1} (\cdot) k$ is known as the
% modular operator in the literature. For $j=1,\dots, n$, we shall  denote by $ 
%   \mathbf y^{(j)}$ or simply $\mathbf y_j$ to indicate that the conjugation
%   acts on the $j$-the factor of a product like $\rho_1 \cdots \rho_n$. 

Local expression of $R_{\Delta_k}$ was first computed in
\cite{MR3194491,MR3148618}. Later, computation was simplified and generalized to 
all toric noncommutative manifolds by the author in \cite{Liu:2015aa}.
For self-containedness, we sketch the computation in section
\ref{sec:symbloic-computation} following \cite{Liu:2015aa}.
\begin{thm}
    \label{thm:varprob-scacur-for-Delta-varphi}
  Upto a  constant factor $\op{Vol}(S^{m-2})/2$,
$R_{\Delta_\varphi}$ is of the form:
\begin{align}
    \label{eq:varprob-scacur-for-Delta-varphi}
    R_{\Delta_\varphi}  = k^{j_m} K_{\Delta_\varphi} ( \mathbf y)(\nabla^2 k)
    \cdot g^{-1} + k^{j_m-1}  H_{\Delta_\varphi}( \mathbf y_1, \mathbf y_2)
    (\nabla k \nabla k) \cdot g^{-1},
\end{align}
where $j_m = -m/2$ and
\begin{align}
\label{eq:varprob-K-Delta-varphi-vs-k}
K_{\Delta_\varphi} (y;m) &= \sqrt{y} K_{\Delta_k} (y;m), \\
    H_{\Delta_\varphi} (y_1,y_2;m) & = \sqrt{y_1 y_2}  H_{\Delta_k}
    (y_1, y_2;m).
    \label{eq:varprob-H-Delta-varphi-vs-k}
\end{align}
   \end{thm}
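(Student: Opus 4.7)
The strategy I would take is to bypass a direct pseudo-differential computation for $\Delta_\varphi$ and instead exploit a similarity with $\Delta_k$, whose modular curvature is already computed in Section \ref{sec:symbloic-computation}. The key algebraic identity is
\begin{align*}
\Delta_\varphi - \lambda \;=\; k^{-1/2}(\Delta_k - \lambda) k^{1/2},
\end{align*}
which follows immediately from $k^{-1/2}(k\Delta) k^{1/2} = k^{1/2}\Delta k^{1/2} = \Delta_\varphi$ together with $k^{-1/2}\lambda k^{1/2} = \lambda$. Propagating through the Cauchy integral defining the heat semigroup gives $e^{-t\Delta_\varphi} = k^{-1/2} e^{-t\Delta_k} k^{1/2}$, and pairing with $a$ under the trace then yields, by cyclicity of $\varphi_0$, the identity $V_j(a,\Delta_\varphi) = V_j(k^{1/2} a k^{-1/2},\Delta_k)$; specializing to $j=2$ extracts the functional density relation $R_{\Delta_\varphi} = k^{-1/2} R_{\Delta_k} k^{1/2}$.

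The remainder is to translate this conjugation into a rescaling of the spectral functions in the formula for $R_{\Delta_k}$ sketched in the final section. Recalling $\mathbf y = \op{Ad}_k$, the half-power conjugation coincides with $\mathbf y^{1/2}$, that is $k^{-1/2}(\cdot) k^{1/2} = \mathbf y^{1/2}(\cdot)$, and commutes past the scalar factor $k^{j_m}$. Using the Fourier representation \eqref{eq:varprob-defn-Ky1yn}, $\mathbf y^{1/2}$ may be pulled through $K_{\Delta_k}(\mathbf y)$ since both lie in the one-parameter group generated by $\mathbf x = \log\mathbf y$; the net effect is to shift every $\mathbf y^{-i\xi}$ appearing inside the integral to $\mathbf y^{1/2-i\xi}$, which is precisely the functional calculus of the symbol $\sqrt{y}\,K_{\Delta_k}(y)$. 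For the two-variable term, the analogous computation on $\mathbf y^{-i\xi_1}(\nabla k)\cdot \mathbf y^{-i\xi_2}(\nabla k) = k^{i\xi_1}(\nabla k) k^{-i\xi_1+i\xi_2}(\nabla k) k^{-i\xi_2}$, conjugated by $k^{\pm 1/2}$ from the outer extremes, reassembles as
\begin{align*}
\mathbf y^{1/2-i\xi_1}(\nabla k)\cdot \mathbf y^{1/2-i\xi_2}(\nabla k),
\end{align*}
so that the conjugation replaces $H_{\Delta_k}(y_1,y_2;m)$ by $\sqrt{y_1 y_2}\,H_{\Delta_k}(y_1,y_2;m)$, yielding \eqref{eq:varprob-K-Delta-varphi-vs-k} and \eqref{eq:varprob-H-Delta-varphi-vs-k}.

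The step requiring a moment of care is the two-variable bookkeeping: the $k^{\pm 1/2}$ factors enter at the outer extremes of a product with an interior power of $k$ sitting between the two tensor slots, yet must produce a symmetric $\mathbf y^{1/2}$-shift on each slot independently. A direct exponent count using $\mathbf y^s(X) = k^{-s}Xk^s$ confirms this with no stray cross term, and everything else reduces to citing the expression of $R_{\Delta_k}$ derived in the last section. No new input from the hypergeometric machinery of Section \ref{sec:spfun-hygeofun} is needed at this stage; the theorem is purely a compatibility statement between the two choices of conformal Laplacian.
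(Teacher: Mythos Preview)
Your proposal is correct and follows essentially the same route as the paper: both use the similarity $\Delta_\varphi = k^{-1/2}\Delta_k k^{1/2}$ to deduce $R_{\Delta_\varphi} = \mathbf y^{1/2}(R_{\Delta_k})$, then read off the $\sqrt{y}$ and $\sqrt{y_1y_2}$ rescalings of the spectral functions. The paper's proof is terser, citing \cite[Eq.~(3.19)]{LIU2017138} for the density relation and declaring the functional relations immediate, whereas you spell out the Fourier-side bookkeeping for the two-variable term explicitly.
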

   \begin{proof}
       It is easier to deal with $R_{\Delta_k}$ first, where  $\Delta_k
       = k \Delta$, whose calculation is put in section
       \ref{sec:symbloic-computation}. Based on Theorem
       \ref{thm:psecal-R_Delta_k}, the result follows
       quickly from the relation between $\Delta_k$ and $\Delta_\varphi$:      
        $\Delta_\varphi = k^{1/2} \Delta
       k^{1/2} = k^{-1/2} \Delta_k k^{1/2}$. Same argument as in \cite[Eq.
       (3.19)]{LIU2017138} shows that
       \begin{align*}
           R_{\Delta_\varphi} = k^{-1/2} R_{\Delta_k} k^{1/2} 
           = \mathbf y^{1/2}( R_{\Delta_k}) .
       \end{align*}
       The functional relations \eqref{eq:varprob-K-Delta-varphi-vs-k} and
       \eqref{eq:varprob-H-Delta-varphi-vs-k} follows immediately. 
   \end{proof}
\subsection{Functional relations in Higher dimensions}
% When $\dim M >2$, functional relations can be derived from the variation of the 
%  associated Einstein-Hilbert action 
 After obtaining the 
 explicit expression of $R_{\Delta_\varphi}$, it is natural to consider
 related variation problems  to the
 corresponding Einstein-Hilbert action: 
\begin{align*}
    F_{\mathrm{EH}}(k) = V_2(1,\Delta_\varphi) = \varphi_0( R_{\Delta_\varphi}),
   \end{align*}
   as a functional in $k$, or equivalently, a functional on the conformal class
   of metrics defined by the flat metric.
    Notice that, due to
   \eqref{eq:varprob-V_j-step1},  $F_{\mathrm{EH}}$ is a 
   constant functional  in dimension two. Therefore we always assume that the
   dimension of the underlying manifold is greater or equal than $2$.
\begin{prop}
    \label{prop:varprob-Feh-closedform}
    The Einstein-Hilbert action is given by the following  local formula:
    \begin{align*}
     F_{\mathrm{EH}} (k) =  \varphi_0 \brac{
    k^{j_m-1} T_{\Delta_\varphi} (\mathbf y)(\nabla k) (\nabla k)
        \cdot g^{-1}
    }.
   \end{align*}
where the function $T_{\Delta_\varphi}$ is determined by $K_{\Delta_\varphi}$ and
$H_{\Delta_\varphi}$, 
\begin{align}
    \label{eq:varprob-defn-TDleta}
    T_{\Delta_\varphi}( y ) = -K_{\Delta_\varphi}(1) \frac{y^{j_m} -1}{y-1}
    + H_{\Delta_\varphi}( y,y^{-1}).
\end{align}
\end{prop}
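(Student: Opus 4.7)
The plan is to start from $F_{\mathrm{EH}}(k) = \varphi_0(R_{\Delta_\varphi})$ and substitute the explicit formula \eqref{eq:varprob-scacur-for-Delta-varphi}, which splits the Einstein--Hilbert action into a $K_{\Delta_\varphi}$-piece carrying $\nabla^2 k$ (with prefactor $k^{j_m}$) and an $H_{\Delta_\varphi}$-piece carrying $\nabla k\cdot\nabla k$ (with prefactor $k^{j_m-1}$). My goal is to recast each of these in the common form $\varphi_0\bigl(k^{j_m-1}\Phi(\mathbf{y})(\nabla k)\cdot\nabla k\cdot g^{-1}\bigr)$ for an appropriate one-variable spectral function $\Phi$; adding the two contributions will then identify $\Phi$ with the function $T_{\Delta_\varphi}$ of \eqref{eq:varprob-defn-TDleta}.

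For the $K$-piece, I would first apply the one-variable integration-by-parts identity \eqref{eq:varprob-integrationbyparts-onevar} to move $K_{\Delta_\varphi}(\mathbf{y})$ off $\nabla^2 k$ and onto the commuting factor $k^{j_m}$, producing $K_{\Delta_\varphi}(\mathbf{y}^{-1})(k^{j_m})$. Because $k^{j_m}$ commutes with $k$, every power of $\mathbf{y}$ acts trivially on it, so this collapses to the scalar multiple $K_{\Delta_\varphi}(1)\,k^{j_m}$. Next, the tracial property $\varphi_0\circ\nabla=0$ together with the Leibniz rule converts $\varphi_0(k^{j_m}\nabla^2 k)$ into $-\varphi_0(\nabla k^{j_m}\cdot\nabla k)$, and Lemma \ref{lem:varprob-nabla-k^j} rewrites $\nabla k^{j_m} = k^{j_m-1}\frac{\mathbf{y}^{j_m}-1}{\mathbf{y}-1}(\nabla k)$. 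A final cyclic rearrangement pulls $k^{j_m-1}$ out front, giving the spectral function $-K_{\Delta_\varphi}(1)\frac{y^{j_m}-1}{y-1}$ in the required form.

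For the $H$-piece, I would invoke the two-variable integration-by-parts identity \eqref{eq:varprob-integrationbyparts-I} with $\rho_1 = k^{j_m-1}$ and $\rho_2 = \rho_3 = \nabla k$, yielding $\varphi_0\bigl(H_{\Delta_\varphi}(\mathbf{y}_2,\mathbf{y}_1^{-1}\mathbf{y}_2^{-1})(k^{j_m-1}\cdot\nabla k)\cdot\nabla k\bigr)$. The crucial observation is that after this swap, $\mathbf{y}_1$ now acts on the first slot, which is $k^{j_m-1}$; since this commutes with $k$, $\mathbf{y}_1$ reduces to the identity. Consequently both arguments of $H_{\Delta_\varphi}$ become functions of $\mathbf{y}_2$ alone, and the two-variable functional calculus collapses to the one-variable $H_{\Delta_\varphi}(\mathbf{y},\mathbf{y}^{-1})$ acting only on the $\nabla k$ in the second slot. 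Extracting $k^{j_m-1}$ back to the front produces exactly the $H_{\Delta_\varphi}(y,y^{-1})$ contribution to $T_{\Delta_\varphi}$.

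Summing the two contributions gives the closed form asserted in the proposition. The only delicate point is verifying that the two-variable modular calculus $H_{\Delta_\varphi}(\mathbf{y}_2,\mathbf{y}_1^{-1}\mathbf{y}_2^{-1})$ genuinely collapses to a one-variable one when the first slot is occupied by a power of $k$; once this is granted, the remainder is bookkeeping with traciality, the Leibniz rule, and Lemma \ref{lem:varprob-nabla-k^j}. I do not foresee any deeper obstacle.
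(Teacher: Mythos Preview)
Your proposal is correct and follows essentially the same line as the paper's proof: reduce the $K$-piece to the scalar $K_{\Delta_\varphi}(1)$ using the trace property, integrate by parts against $\nabla^2 k$ via Lemma~\ref{lem:varprob-nabla-k^j}, and collapse the $H$-piece to a one-variable functional. The only cosmetic difference is that for the $H$-piece the paper invokes identity~\eqref{eq:varprob-integrationbyparts-I-I} directly (treating the $k^{j_m-1}$ prefactor as transparent to the modular action), whereas you route through the three-slot identity~\eqref{eq:varprob-integrationbyparts-I} and then observe that $\mathbf y_1$ acts trivially on $k^{j_m-1}$; both arrive at $H_{\Delta_\varphi}(\mathbf y,\mathbf y^{-1})$ and the bookkeeping is otherwise identical.
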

\begin{rem}
    Observe that: 
    \begin{align}
        K_{\Delta_\varphi}(1) = K_{\Delta_k}(1), \,\,\,
        H_{\Delta_\varphi}( y,y^{-1}) =  H_{\Delta_k}( y,y^{-1}) .
        \label{eq:varprob-rem-Kphi=Kk}
    \end{align}
\end{rem}
\begin{proof}
    Since $\varphi_0$ is a trace, we have in general, $\varphi_0(k^{j_m}
    K(\mathbf y)(\rho)) = K(1) \varphi_0(k^{j_m} \rho)$. In particular, 
    \begin{align*}
        \varphi_0\brac{
            k^{j_m} K_{\Delta_\varphi}(\mathbf y)(\nabla^2 k)
        } &= K_{\Delta_\varphi}(1) \varphi_0\brac{
        k^{j_m} \nabla^2 k} \\ &= 
        K_{\Delta_\varphi}(1) \varphi_0 \brac{
            k^{j_m-1} \frac{\mathbf y^{j_m} -1}{ \mathbf y-1} (\nabla k) \cdot 
            (\nabla k)
        }
    ,
    \end{align*}
    here we have used Lemma \ref{lem:varprob-nabla-k^j} for the second ``='' sign.
    This explains the first term in \eqref{eq:varprob-defn-TDleta}. While the
     second term follows immediately  from 
    \eqref{eq:varprob-integrationbyparts-I-I}:
    \begin{align*}
        \varphi_0\brac{
            k^{j_m-1} H_{\Delta_\varphi}(\mathbf y_1,\mathbf y_2)
        (\nabla k \nabla k) } = \varphi_0\brac{
            [H_{\Delta_\varphi}(\mathbf y,\mathbf y^{-1})(\nabla k)]
            \cdot \nabla k
            }
        .
    \end{align*}
\end{proof}

The setting of the variation is identical to the one in the previous section.
For $\varepsilon
> 0$ and another self-adjoint  element $a=a^* \in C^\infty(\T^m_\theta)$, we
consider the family $k_\varepsilon \defeq k(\varepsilon) = e^{h+\varepsilon
a}$ and study the variation 
$\delta_a \defeq d/d\varepsilon |_{\varepsilon = 0}$.

\begin{defn}
    \label{defn:varprob-gradFEH}
    Consider the Einstein-Hilbert functional near a fixed Weyl factor $k$.
     Notice that $a \mapsto \delta_a F_{\mathrm{EH}}(k)$ is
    a linear functional in $a$, we define the gradient of $\grad
    F_{\mathrm{EH}} (k)$ (at $k$) to be the unique element 
    in $C^\infty(\T^m_\theta)$ 
    with the property that
    \begin{align*}
        \delta_a F_{\mathrm{EH}}(k) = \varphi_0 (a \grad F_{\mathrm{EH}}).
    \end{align*}
\end{defn}
% \begin{align*}
%     \delta_a \defeq \frac{d}{ \varepsilon  } \Big|_{\varepsilon=0}.
% \end{align*}
Let us start the computation of $\delta_a F_{\mathrm{EH}}(k)$ with
\begin{align}
    \delta_a (\Delta_\varphi) &= \delta_a ( k^{1/2} \Delta k^{1/2})
    = \delta_a(k^{1/2}) \Delta k^{1/2} +  k^{1/2} \Delta \delta_a(k^{1/2})
    \nonumber
    \\ &= 
    (\delta_a(k^{1/2})k^{-1/2}) \Delta_\varphi + \Delta_\varphi
    (k^{-1/2}\delta_a(k^{1/2})) . 
%     \nonumber \\
%     & \defeq f_k \Delta_\varphi.
     \label{eq:varpron-defn-fk}
\end{align}
According to Duhamel's formula,  for $t>0$, 
\begin{align*}
    \delta_a \Tr \brac{ e^{-t\Delta_\varphi}} = -t 
    \Tr \brac{
        \delta_a(\Delta_\varphi)  e^{-t\Delta_\varphi}
    }.
\end{align*}
With \eqref{eq:varpron-defn-fk} and the trace property, we continue:
\begin{align*}
    \delta_a \Tr \brac{ e^{-t\Delta_\varphi}} & = -t 
    \Tr \brac{
    (\delta_a(k^{1/2})k^{-1/2} + k^{-1/2}\delta_a(k^{1/2}) )
    \Delta_\varphi e^{-t\Delta_\varphi}
    } \\
    &= t \frac{d}{dt} \Tr \brac{
    (\delta_a(k^{1/2})k^{-1/2} + k^{-1/2}\delta_a(k^{1/2}) )
e^{-t\Delta_\varphi}
    } \\
    & \defeq t \frac{d}{dt} \Tr \brac{
    f_k e^{-t\Delta_\varphi}
}.
\end{align*}
Use the fact that both $\delta_a$ and $d/dt$ can pass through the asymptotic
expansion, we see that 
\begin{align*}
    \sum_{j=0}^\infty \delta_a V_j(1, \Delta_\varphi) t^{(j-m)/2}= 
     \sum_{j=0}^\infty    V_j(f_k, \Delta_\varphi) 
     t \frac{d}{dt} t^{(j-m)/2} .
\end{align*}
By equating the coefficients on two sides, we obtained:
\begin{prop}
    \label{prop:varprob-V_j-step1}
    Let $f_k = \delta_a(k^{1/2})k^{-1/2} + k^{-1/2}\delta_a(k^{1/2}) $ defined
    as before, for $j=0,1,2,\dots$, 
    \begin{align}
    \label{eq:varprob-V_j-step1}
        \delta_a V_j(1, \Delta_\varphi) = 
        \frac{j-m}{2}
        V_j(f_k , \Delta_\varphi) .
    \end{align}
\end{prop}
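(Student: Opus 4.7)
The plan is to simply equate coefficients in the two asymptotic expansions that have already been set up in the discussion preceding the statement. The upshot of the preceding lines is the identity
\begin{align*}
    \delta_a \Tr\bigl(e^{-t\Delta_\varphi}\bigr) \;=\; t\,\frac{d}{dt} \Tr\bigl( f_k\, e^{-t\Delta_\varphi}\bigr),
\end{align*}
which follows from Duhamel's formula applied to $\delta_a(\Delta_\varphi)$ together with the product rule for $\delta_a(k^{1/2}\Delta k^{1/2})$, the observation that $\delta_a(\Delta_\varphi) e^{-t\Delta_\varphi}= -\partial_t e^{-t\Delta_\varphi}$ modulo the trace, and the cyclicity of $\Tr$. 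So the remaining task is purely formal: insert the heat expansion \eqref{eq:intro-heat-expansion} on each side and read off the coefficient of every power of $t$.

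First I would expand each side. On the left, since $\delta_a$ acts coefficientwise on the asymptotic series (this is the step to be justified, see below), one obtains
\begin{align*}
    \delta_a \Tr\bigl(e^{-t\Delta_\varphi}\bigr) \;\sim_{t\searrow 0}\; \sum_{j=0}^{\infty} \delta_a V_j(1,\Delta_\varphi)\, t^{(j-m)/2}.
\end{align*}
On the right, I apply the expansion with $a=f_k$ and then differentiate term by term, using $t\tfrac{d}{dt} t^{(j-m)/2} = \tfrac{j-m}{2}\, t^{(j-m)/2}$, to obtain
\begin{align*}
    t\,\frac{d}{dt}\Tr\bigl(f_k\, e^{-t\Delta_\varphi}\bigr) \;\sim_{t\searrow 0}\; \sum_{j=0}^{\infty} \frac{j-m}{2}\, V_j(f_k,\Delta_\varphi)\, t^{(j-m)/2}.
\end{align*}
Equating the coefficients of $t^{(j-m)/2}$ for each $j\ge 0$ yields \eqref{eq:varprob-V_j-step1}.

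The only nontrivial step is justifying that both $\delta_a$ (the G\^ateaux derivative in $k$) and $t\,d/dt$ may be commuted past the asymptotic expansion. This is the expected main obstacle. For $d/dt$ this is standard, because the expansion of $\Tr(f_k e^{-t\Delta_\varphi})$ admits a remainder estimate in an appropriate Sobolev/trace-class norm that can be differentiated in $t$ up to any prescribed order. For $\delta_a$ one needs smoothness of the family $\Delta_{\varphi,\varepsilon} = k_\varepsilon^{1/2}\Delta k_\varepsilon^{1/2}$ in $\varepsilon$ together with uniform-in-$\varepsilon$ bounds on the remainder in the heat expansion; these follow from the pseudo-differential calculus on $\T^m_\theta$ used throughout the paper, since the symbols depend smoothly on $k_\varepsilon$ and the Seeley--DeWitt type estimates carry over uniformly in compact neighborhoods of $k$. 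Modulo this (well-known) regularity check, the identification of coefficients is immediate.
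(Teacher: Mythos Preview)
Your proposal is correct and follows essentially the same approach as the paper: the paper derives the identity $\delta_a \Tr(e^{-t\Delta_\varphi}) = t\,\tfrac{d}{dt}\Tr(f_k e^{-t\Delta_\varphi})$ from Duhamel's formula and trace cyclicity in the lines immediately preceding the statement, then simply asserts that both $\delta_a$ and $d/dt$ pass through the asymptotic expansion and equates coefficients. Your write-up is in fact slightly more thorough in outlining why that passage is permissible.
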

We only interested the second coefficient   in which $j=2$.
\begin{thm}
\label{thm:varprob-gradFeh-heat-op-side}
    Keep the notations. 
    %The gradient of the Einstein-Hilbert action equals:
    We have the following variation formula for the Einstein-Hilbert action
    when the dimension $m>2$:
    \begin{align*}
        %\grad F_{\mathrm{EH}}
        \delta_a F_{\mathrm{EH}}   
        %&= \frac{2-m}{2} k^{-1/2} R_{\Delta_\varphi}\\
        = \varphi_0 (\delta_a(k) \grad_\mathrm{k} F_{\mathrm{EH}}),
%         \brac{ 
%             \delta_a(k)  \sbrac{
%         k^{j_m-1} K_{\Delta_\varphi}(\mathbf y) (\nabla^2 k) \cdot g^{-1}
%         +
%         k^{j_m -2}H_{\Delta_\varphi}(\mathbf y_1 \mathbf y_2) (\nabla
%         k \nabla k) \cdot
%         g^{-1} 
% } },
    \end{align*}
in which
\begin{align}
    \begin{split}
       & \frac{2}{2-m} \grad_\mathrm{k} F_{\mathrm{EH}} \\
       = & \,\, 
    k^{j_m-1} \mathbf y^{-1/2} K_{\Delta_\varphi}(\mathbf y;m) (\nabla^2 k) \cdot
    g^{-1}\\
        &+
        k^{j_m -2} \mathbf y^{-1/2}_1 \mathbf y^{-1/2}_2
        H_{\Delta_\varphi}(\mathbf y_1, \mathbf y_2;m) (\nabla
        k \nabla k) \cdot
        g^{-1} \\
        = & \,\,  k^{j_m-1} K_{\Delta_k}(\mathbf y;m) (\nabla^2 k) \cdot
    g^{-1} +
    k^{j_m-2} H_{\Delta_k  }(\mathbf y_1, \mathbf y_2;m) (\nabla
        k \nabla k) \cdot
        g^{-1},
    \end{split}
    \label{eq:varprob-gradk-FEH}
\end{align}
    where $j_m = -m/2$. 
\end{thm}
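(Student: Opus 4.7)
The plan is to exploit Proposition \ref{prop:varprob-V_j-step1} with $j=2$, which reduces the computation of $\delta_a F_{\mathrm{EH}}$ to evaluating
$V_2(f_k,\Delta_\varphi) = \varphi_0(f_k\, R_{\Delta_\varphi})$, where $f_k = \delta_a(k^{1/2})k^{-1/2} + k^{-1/2}\delta_a(k^{1/2})$, and then to expand using the explicit expression of $R_{\Delta_\varphi}$ from Theorem \ref{thm:varprob-scacur-for-Delta-varphi}. The overall factor $(2-m)/2$ then appears for free from Proposition \ref{prop:varprob-V_j-step1}, which already uses $m>2$ so that the prefactor is nonzero.

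First I would rewrite $f_k$ in terms of $\delta_a(k)$ instead of $\delta_a(k^{1/2})$. Lemma \ref{lem:varprob-nabla-k^j} applied with $\delta_a$ in place of $\nabla$ and $j=1/2$ gives $\delta_a(k^{1/2}) = k^{-1/2}\tfrac{\mathbf{y}^{1/2}-1}{\mathbf{y}-1}(\delta_a(k))$. The trailing $k^{-1/2}$ in the first summand of $f_k$ is then pulled to the left using the commutation identity $\psi k^{-1/2} = k^{-1/2}\mathbf{y}^{-1/2}(\psi)$, turning $\tfrac{\mathbf{y}^{1/2}-1}{\mathbf{y}-1}$ into $\tfrac{1-\mathbf{y}^{-1/2}}{\mathbf{y}-1}$. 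Adding the two summands causes the numerators to telescope, yielding the clean identity
\[
f_k \;=\; k^{-1}\,\mathbf{y}^{-1/2}(\delta_a(k)) \;=\; k^{-1/2}\,\delta_a(k)\, k^{-1/2},
\]
since $\mathbf{y}^{1/2}-\mathbf{y}^{-1/2} = \mathbf{y}^{-1/2}(\mathbf{y}-1)$.

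Next I would substitute this expression for $f_k$ together with the two-term formula for $R_{\Delta_\varphi}$ into $V_2(f_k,\Delta_\varphi)$ and process the $K_{\Delta_\varphi}$ and $H_{\Delta_\varphi}$ pieces separately. For each piece the recipe is the same: merge all powers of $k$ into a single monomial sitting to the right of $\delta_a(k)$, apply cyclicity of $\varphi_0$ to place $\delta_a(k)$ at the front, and push the remaining trailing $k^{-1/2}$ through the functional-calculus expression using $\psi k^{-1/2} = k^{-1/2}\mathbf{y}^{-1/2}(\psi)$. For the $K_{\Delta_\varphi}$ term this produces an extra $\mathbf{y}^{-1/2}$ inside the spectral function and the total $k$-power collapses to $k^{j_m-1}$; for the $H_{\Delta_\varphi}$ term, passing $k^{-1/2}$ through the two-tensor $\nabla k\,\nabla k$ yields the joint operator $\mathbf{y}_1^{-1/2}\mathbf{y}_2^{-1/2}$ acting on the tensor, with $k$-power $k^{j_m-2}$.

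Combining the two pieces delivers the first displayed equality of \eqref{eq:varprob-gradk-FEH}. The second equality is immediate from the identities \eqref{eq:varprob-K-Delta-varphi-vs-k} and \eqref{eq:varprob-H-Delta-varphi-vs-k}: the prefactors $\mathbf{y}^{-1/2}$ and $\mathbf{y}_1^{-1/2}\mathbf{y}_2^{-1/2}$ absorb exactly the $\sqrt{y}$ and $\sqrt{y_1y_2}$ relating $K_{\Delta_\varphi}$, $H_{\Delta_\varphi}$ to $K_{\Delta_k}$, $H_{\Delta_k}$. The main obstacle is purely bookkeeping: one must keep careful track of half-powers of $\mathbf{y}$ when commuting $k^{\pm 1/2}$ past functional-calculus expressions, and must consistently use the multi-slot convention \eqref{eq:hgeofun-kjandy_j} to ensure that $\mathbf{y}^{-1/2}$ acting on a two-tensor is indeed interpreted as $\mathbf{y}_1^{-1/2}\mathbf{y}_2^{-1/2}$ in the contraction notation.
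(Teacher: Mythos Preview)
Your proposal is correct and follows essentially the same approach as the paper: compute $f_k = k^{-1}\mathbf{y}^{-1/2}(\delta_a(k))$ via Lemma \ref{lem:varprob-nabla-k^j}, insert this into $V_2(f_k,\Delta_\varphi)=\varphi_0(f_k R_{\Delta_\varphi})$, and use the trace property to move everything next to $\delta_a(k)$. The only cosmetic difference is that the paper performs the trace manipulation on $R_{\Delta_\varphi}$ as a whole, obtaining $\grad_{\mathrm k}F_{\mathrm{EH}} = k^{-1}\mathbf{y}^{-1/2}(R_{\Delta_\varphi}) = k^{-1}R_{\Delta_k}$ in one stroke (using $R_{\Delta_\varphi}=\mathbf{y}^{1/2}(R_{\Delta_k})$ from Theorem \ref{thm:varprob-scacur-for-Delta-varphi}), and only then substitutes the two-term local expression, whereas you substitute first and process the $K$ and $H$ pieces separately; the outcome and the ideas are identical.
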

\begin{proof}
 Notice that
 \begin{align*}
     f_k &= k^{-1/2} (1+ \mathbf y^{-1/2}) \delta_a(k^{1/2}) 
     = k^{-1} (1+ \mathbf y^{-1/2}) \frac{\mathbf y^{1/2}-1}{\mathbf y -1} 
( \delta_a (k) ) \\
     &= k^{-1} \mathbf y^{-1/2} (\delta_a (k) ).
 \end{align*}
 Hence
 \begin{align*}
     V_2(f_k, \Delta_\varphi) &= \varphi_0 ( f_k R_{\Delta_\varphi}) =
     \varphi_0 ( k^{-1} \mathbf y^{-1/2} (\delta_a (k)
     ) R_{\Delta_\varphi})\\
     &=\varphi_0( \delta_a (k)\mathbf y^{1/2} ( R_{\Delta_\varphi} k^{-1} )
         =\varphi_0 ( \delta_a (k) k^{-1} \mathbf y^{-1/2} (R_{\Delta_\varphi})
         )
     .
 \end{align*}
 That is 
 \begin{align*}
     \grad_\mathrm{k} F_\mathrm{EH} = k^{-1} \mathbf y^{-1/2}
     (R_{\Delta_\varphi}) = k^{-1} R_{\Delta_k}.
 \end{align*}
 To see \eqref{eq:varprob-gradk-FEH}, one just needs to substitute
 $R_{\Delta_\varphi}$ with its  explicit
 formula   stated in Theorem
 \ref{thm:varprob-scacur-for-Delta-varphi}.
\end{proof}

On the other side, we have computed (in Theorem \ref{thm:varprob-var-localT}
with $j = \tilde j_m = j_m-1
= -m/2-1$) the 
variation using the local expression
\begin{align*}
    \delta_a V_2(1,\Delta_\varphi) = \delta_a \varphi_0 \brac{
    k^{j_m -1} [T_{\Delta_\varphi} (\mathbf y) (\nabla k) ](\nabla k)
}, 
\end{align*}
Namely, 
\begin{align}
    \label{eq:varprob-gradFEH-from-localform}
    \begin{split}
     \grad_\mathrm{k} F_\mathrm{EH} &= 
     k^{j_m-1} [\sum_{l=1}^2\mathrm{I}^{(l)}(T)](\mathbf y;\tilde j_m)
     (\nabla^2 k)
     \cdot
    g^{-1} \\
    &+
    k^{j_m-2} [\sum_{l=1}^4\mathrm{II}^{(l)}(T)]  
    (\mathbf y_1, \mathbf y_2;\tilde j_m) (\nabla
        k \nabla k) \cdot
        g^{-1} .
   \end{split}
    \end{align}
The functional relations follows from \eqref{eq:varprob-gradFEH-from-localform}
 and \eqref{eq:varprob-gradk-FEH}:
    \begin{thm}
            \label{thm:varprob-thm-FunRelation-T-to-K-Ha}
        Let $T(u)\defeq T_{\Delta_\varphi}(u;m)$ be the spectral function that
        defines the Einstein-Hilbert action in Proposition
        $\ref{prop:varprob-Feh-closedform}$ and  $K_{\Delta_k}$,  $H_{\Delta_k}$
        are the spectral functions for the modular curvature $R_{\Delta_k}$ in
        Theorem $\ref{thm:psecal-R_Delta_k}$, 
        \begin{align}
            K_{\Delta_k} (u;m) = [\sum_{l=1}^2\mathrm{I}^{(l)}(T;\tilde
            j_m)](u),
            \,\,\,
            H_{\Delta_k} (u,v;m) = [\sum_{l=1}^4\mathrm{II}^{(l)}(T;\tilde j_m)]
            (u,v),
            \label{eq:varprob-thm-FunRelation-T-to-K-Ha}
        \end{align}
        where the righ hand sides are defined in Theorem
        $\ref{thm:varprob-var-localT}$ with $\tilde j_m = -m/2-1$.
    \end{thm}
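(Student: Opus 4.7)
The plan is to compute the gradient $\grad_\mathrm{k} F_{\mathrm{EH}}$ in two essentially independent ways and then equate the results. On the \emph{heat operator} side, Proposition \ref{prop:varprob-V_j-step1} together with the chain of simplifications given in the proof of Theorem \ref{thm:varprob-gradFeh-heat-op-side} expresses $\grad_\mathrm{k} F_{\mathrm{EH}}$ directly in terms of $R_{\Delta_k}$, and after substituting the explicit form of $R_{\Delta_k}$ from Theorem \ref{thm:varprob-scacur-for-Delta-varphi} one arrives at the closed expression \eqref{eq:varprob-gradk-FEH} in which the spectral functions appearing are precisely $K_{\Delta_k}(\mathbf{y};m)$ and $H_{\Delta_k}(\mathbf{y}_1,\mathbf{y}_2;m)$. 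Since we may regard this as already established in the excerpt, no new input is needed here.

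On the \emph{local expression} side, I would start from the closed local formula for $F_{\mathrm{EH}}$ provided by Proposition \ref{prop:varprob-Feh-closedform}, namely
\[
F_{\mathrm{EH}}(k)=\varphi_0\!\brac{k^{j_m-1}\,T_{\Delta_\varphi}(\mathbf{y})(\nabla k)\cdot\nabla k\cdot g^{-1}},
\]
and apply the general variational identity of Theorem \ref{thm:varprob-var-localT} with the specific choices $T=T_{\Delta_\varphi}$ and $j=\tilde{j}_m=j_m-1$. The output is exactly formula \eqref{eq:varprob-gradFEH-from-localform}, written in terms of the two explicit operations $\sum_{l=1}^{2}\mathrm{I}^{(l)}(T;\tilde{j}_m)$ (acting on $\nabla^2 k$) and $\sum_{l=1}^{4}\mathrm{II}^{(l)}(T;\tilde{j}_m)$ (acting on $\nabla k\cdot\nabla k$). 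All the heavy lifting of this step — namely the lengthy per-term computation using Proposition \ref{prop:varprob-Texp-in-k}, Lemma \ref{lem:varprob-nabla-k^j} and the various integration-by-parts identities — is already packaged inside Theorem \ref{thm:varprob-var-localT}.

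The conclusion is then obtained by comparing the two expressions for $\grad_\mathrm{k} F_{\mathrm{EH}}$. Since the Weyl factor $k$ can be freely varied among self-adjoint exponentials in $C^\infty(\T^m_\theta)$, the tensor pieces carrying $\nabla^2 k$ and $\nabla k\cdot\nabla k$ are independent, and the overall power $k^{j_m-1}$, respectively $k^{j_m-2}$, factors out in both formulas. Matching the $\nabla^2 k$ piece gives the first relation of \eqref{eq:varprob-thm-FunRelation-T-to-K-Ha}, and matching the $\nabla k\cdot\nabla k$ piece gives the second. The fact that the gradient is characterized uniquely by $\delta_a F_{\mathrm{EH}}=\varphi_0(a\,\grad F_{\mathrm{EH}})$ in Definition \ref{defn:varprob-gradFEH}, combined with the trace property of $\varphi_0$, ensures that the two expressions cannot differ by any trace-annihilating term.

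The step that I expect to require the most care is the separation argument in the last paragraph: one must justify that two \emph{a priori} different spectral functions applied to $\mathbf{y}_1,\mathbf{y}_2$ and contracted against $\nabla k\cdot\nabla k$ produce the same functional on $k$ only if the spectral functions themselves coincide. This can be handled by test-function arguments — choosing $k=e^{h}$ with $h$ varying in a rich family and localizing via the Fourier decomposition \eqref{eq:varprob-defn-Ky1yn} of the functional calculus — but it is the point where a careful check is warranted rather than a formal identification.
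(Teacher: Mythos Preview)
Your proposal is correct and follows essentially the same route as the paper: the paper's entire proof is the single sentence that the relations ``follow from \eqref{eq:varprob-gradFEH-from-localform} and \eqref{eq:varprob-gradk-FEH}'', i.e.\ exactly the two-ways computation of $\grad_{\mathrm k}F_{\mathrm{EH}}$ that you outline. Your closing paragraph about the separation argument is more careful than the paper, which simply identifies the spectral functions without comment; note also that when matching \eqref{eq:varprob-gradk-FEH} and \eqref{eq:varprob-gradFEH-from-localform} there is the overall constant $(2-m)/2$ from Theorem~\ref{thm:varprob-gradFeh-heat-op-side} that the paper silently absorbs.
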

    We remind the reader the explicit construction of the right hand sides in Eq.
    \eqref{eq:varprob-thm-FunRelation-T-to-K-Ha}. For the one-variable part:
    \begin{align}
        (\mathrm{I}^{(1)} + \mathrm{I}^{(2)} ) (u;j)=
        -T(u)-u^jT(u^{-1}) ,
        \label{eq:varprob-IT-explicit}
    \end{align}
    and the two-variable part:
    \begin{align}
\label{eq:varprob-IIT-explicit}
\begin{split}
     \mathrm{II}^{(1)}(T;j)(u,v) &= T(u) \frac{
            (uv)^j -1
        }{ uv -1}  ,\\
\mathrm{II}^{(2)}(T;j)(u,v) &=
        \frac{u^{j-1}
        \left(T\left(  u^{-1} \right)-T(v)\right)}{u^{-1}  -v}-\frac{u (u
        v)^{j-1} \left(T\left( v^{-1}  \right)-T(u)\right)}{  v^{-1}-u}
        ,\\
\mathrm{II}^{(3)}(T;j)(u,v) &=
        -\frac{\left(u^j-1\right) T(v)}{u-1}+\frac{v (T(u v)-T(v))}{u
        v-v}-\frac{T(u v)-T(u)}{u v-u}
        ,\\
\mathrm{II}^{(4)}(T;j)(u,v) &=
-\frac{\left(u^j-1\right) v^j T\left( v^{-1} \right)}{u-1}-\frac{(u
v)^j T\left( (uv)^{-1}  \right)-u^j T\left( u^{-1}  \right)}{u
v-u} \\
&+\frac{v \left((u v)^j T\left( (uv)^{-1} \right)-v^j
T\left(  v^{-1} \right)\right)}{u v-v} .
\end{split}
           \end{align}

\subsection{Verification of the functional relations}
In this section, we give explicit expression of the spectral functions 
$K_{\Delta_k}$, $H_{\Delta_k}$ and $T_{\Delta_k}$ as functions in $m$, the
dimension of the ambient manifold. The upshot is that the functional relations 
in Theorem \eqref{thm:varprob-thm-FunRelation-T-to-K-Ha} can be verified by
a computer algebra system once for all real value
$m$ with $m>2$. It is indeed a new theorem rather than a double validation, 
 because for general real-value $m$
with $m>2$, there are no geometric proofs so far.

We recall the main result in the last section,
\begin{align}
     \label{eq:b2cal-KDeltak-copied}
K_{\Delta_k} (y;m) &= \frac4m K_{3,1}(y;m) - K_{2,1}(y;m) \\
 &= 
    -\frac{1}{2} \Gamma \left(  m/2 +1\right)
     \pFq21 \left(  m/2 +1,1,3,1-s\right) 
     \\ &
     + \frac{2 \Gamma \left(m/2  +2\right)}{3m}  \pFq21\left(m/2  +2,1,4,1-s
     \right), \nonumber
 \end{align}
and
\begin{align}
    \begin{split}
     H_{\Delta_k}( s,t ;m) &= (\frac{4 }{m}+2) H_{2,1,1}\left(y_1,y_2;m\right)
     -\frac{4 y_1
     H_{2,2,1}\left(y_1,y_2 ;m\right)}{m}
     \\ & 
     -\frac{8
     H_{3,1,1}\left(y_1,y_2;m\right)}{m} \\
& =  \frac{1}{6m}    
 2 (m+2) \Gamma \left(m/2+2\right) F_1\left(m/2+2;1,1;4;1-s t,1-s\right)
 \\ & 
-\frac{1}{6m}\Gamma \left(m/2+3\right)  2 F_1\left(m/2+3;1,1;5;1-s
t,1-s\right)\\ & 
-\frac{1}{6m}\Gamma \left(m/2+3\right) s F_1\left(m/2+3;1,2;5;1-s t,1-s\right)
.
\label{eq:b2cal-HDelta-hgfun-copied}
     \end{split}
   \end{align}
The evaluation of $F_1$ family appeared in
\eqref{eq:b2cal-HDelta-hgfun-copied} can be reduced to $\pFq21$ according to
Proposition \ref{prop:hgeofun-cor-F1-dividedidff}. On the other hand, 
   \textsf{Mathematica} is able to provide symblic evaluation in the parameter
   $a$ for $\pFq21(a,b;c,z)$ when other two parameters $b,c$ are given
   numerically.  
 %One can compute explicitly with the assiant of \textsf{Mathematica} 

   \begin{prop}
       For any $m=\dim M >2$, the modular curvature $R_{\Delta_k}$ (see Theorem
       $\ref{thm:psecal-R_Delta_k}$) has the following explicit modular
       functions    
\begin{align}
    \begin{split}
        K_{\Delta_k}(s;m) &= \frac{
  -8 s^{-\frac{m}{2}} \left((m (s-1)-4 s) s^{m/2}+s (m (s-1)+4)\right) \Gamma
  \left(\frac{m}{2}+2\right)      
        }{
        (m-2) m^2 (m+2) (s-1)^3
    } ,
    \end{split}
\label{eq:checkrels-explict-KDelta}
\end{align}
and
\begin{align}
\label{eq:checkrels-explict-HDelta}
    \begin{split}
        H_{\Delta_k}(s,t;m) &= \frac{2}{m}  (s-1)^{-2} (t-1)^{-2} (s t-1)^{-3}
    \Gamma(m/2+1) \\ &      \left[ 
   2s^{-m/2} (s t-1)^3+
   2 (t-1)^2 \left(\frac{1}{2} m (s-1) (s t-1)+s (1-2 s) t+1\right)
     \right.
    \\
    & \left. 
        -2 (s-1)^2 t (s t)^{-\frac{m}{2}} \left(\frac{1}{2} m (t-1) (s t-1)+s
        t^2+t-2\right)  
    \right].
    \end{split}
      \end{align}
      Moreover, the symbolic evaluations still hold when $m \in (2,\infty)$ is
      a real parameter.
\end{prop}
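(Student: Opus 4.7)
My plan is to substitute the expressions \eqref{eq:b2cal-KDeltak-copied} and \eqref{eq:b2cal-HDelta-hgfun-copied} into the definitions, reduce every Appell $F_1$ to a combination of Gauss hypergeometric functions $\pFq21$ via Proposition \ref{prop:hgeofun-cor-F1-dividedidff}, then use the elementary closed form of $\pFq21(a,1;n;z)$ for $n$ a positive integer to finish the simplification. All of these steps preserve analyticity in $m$ on $(2,\infty)$, which is why the formulas extend automatically from integer $m$ to real $m>2$.

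For $K_{\Delta_k}$, the two relevant hypergeometrics are $\pFq21(m/2+1,1;3;1-s)$ and $\pFq21(m/2+2,1;4;1-s)$. Since the middle parameter is $1$ and the lower parameter is an integer, one can start from the elementary identity $\pFq21(a,1;2;z)=((1-z)^{1-a}-1)/((a-1)z)$ and apply contiguous relations in the lower parameter (or equivalently, integrate the Euler integral representation directly) to get closed rational-plus-power expressions in $s$ and $s^{-m/2}$. Plugging these into the combination $\tfrac{4}{m}K_{3,1}-K_{2,1}$ and using $\Gamma(m/2+2)=(m/2+1)\Gamma(m/2+1)$ to collect prefactors produces exactly \eqref{eq:checkrels-explict-KDelta}.

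For $H_{\Delta_k}$, the expression \eqref{eq:b2cal-HDelta-hgfun-copied} involves the two $F_1(\alpha;1,1;c;1-st,1-s)$ functions with $(\alpha,c)=(m/2+2,4),(m/2+3,5)$, together with $F_1(m/2+3;1,2;5;1-st,1-s)$. The first two are reduced by \eqref{eq:hgeofun-cor-F1-dividedidff} to the divided difference $[1-st,1-s]\bigl(z\,\pFq21(\alpha,1;c;z)\bigr)$, and the third is handled by \eqref{eq:hgeofun-cor-F1-for-c=2}. Each resulting $\pFq21(a,1;n;z)$ with $n\in\{4,5\}$ admits the same elementary closed form as in the one-variable step. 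Assembling the three pieces with the prefactors indicated in \eqref{eq:b2cal-HDelta-hgfun-copied} and passing everything over a common denominator $(s-1)^2(t-1)^2(st-1)^3$ yields \eqref{eq:checkrels-explict-HDelta}.

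\paragraph{Main obstacle and real-$m$ extension.} The principal difficulty is not conceptual but algebraic: the raw substitution produces a sum of many rational terms in $s,t$ weighted by $s^{-m/2}$ and $(st)^{-m/2}$, and putting it into the compact closed form requires careful bookkeeping. This is the kind of simplification best delegated to \textsf{Mathematica}, with the human input consisting of choosing the right representation (divided differences of $z\,\pFq21(\alpha,1;n;z)$ rather than of $\pFq21$ alone) so that common factors cancel. Finally, every identity used in the reduction---the Euler-type integral representations, the divided-difference formulas for $F_1$, and the elementary evaluation of $\pFq21(a,1;n;z)$---is analytic in $a$ and hence in the dimension parameter $m$; since both sides of \eqref{eq:checkrels-explict-KDelta} and \eqref{eq:checkrels-explict-HDelta} are analytic functions of $m$ on $(2,\infty)$ and agree on an infinite discrete set (the even integers, say, where the jet formulas \eqref{eq:hgeofun-evenm-tildeKab}--\eqref{eq:hgeofun-evenm-tildeHabc} give an independent verification), the identity persists for all real $m>2$.
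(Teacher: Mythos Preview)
Your proposal is correct and follows essentially the same route as the paper. The paper does not give a formal proof of this proposition; instead, the surrounding text explains that the $F_1$ functions in \eqref{eq:b2cal-HDelta-hgfun-copied} are reduced to $\pFq21$ via Proposition~\ref{prop:hgeofun-cor-F1-dividedidff} (both the divided-difference formula \eqref{eq:hgeofun-cor-F1-dividedidff} and the $(1,2)$-case \eqref{eq:hgeofun-cor-F1-for-c=2}), after which \textsf{Mathematica} evaluates $\pFq21(a,1;c;z)$ symbolically in $a$ once the integer $c$ is fixed---exactly what you describe.

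One minor remark on the real-$m$ extension: your analyticity-plus-agreement-on-integers argument is sound, but it is slightly more roundabout than necessary. Since \textsf{Mathematica}'s symbolic evaluation of $\pFq21(a,1;n;z)$ already returns a closed form valid for generic $a$ (not just integer $a$), the resulting formulas \eqref{eq:checkrels-explict-KDelta} and \eqref{eq:checkrels-explict-HDelta} hold for real $m>2$ directly, without needing to invoke the identity theorem. The paper takes this more direct point of view.
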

Now, let us move on to  $T_{\Delta_k}$  in
\eqref{eq:varprob-defn-TDleta}. First of all,
\begin{align}
    K_{\Delta_k}(1) = 
    \frac{2 \Gamma \left(\frac{m}{2}+2\right)}{3 m}-\frac{\Gamma
    \left(\frac{m}{2}+1\right)}{2}.
    \label{eq:checkrel-defn-KDelta-at-0}
\end{align}
To compute $H_{\Delta_k}(y,y^{-1})$, one can, of course, compute the limit of the
right hand side of \eqref{eq:checkrels-explict-HDelta} as $t \rightarrow
s^{-1}$. Nevertheless, a better way is to use the reduction formula
\eqref{eq:hgfun-F1toGF21-1} of $F_1$, which implies that 
\begin{align}
    \label{eq:checkrel-Habc-reduced-to-Kab}
    H_{a,b,c} (u,u^{-1};m) = K_{a+c,b}(u;m).
\end{align}
%For completeness, we give $T_{\Delta_\varphi}(y)$ in terms of hypergeometric
%functions:
\begin{prop}
    In terms of hypergeometric functions,
\begin{align}
    \label{eq:check-TDelta-using-GF21}
    T_{\Delta_\varphi}(y) &=  -K_{\Delta_\varphi}(1) \frac{s^{-m/2} -1 }{s-1}
    + \frac13 (1+ \frac2m) \Gamma \left(\frac{m}{2}+2\right) \pFq21
    ( \frac{m}{2}+2,1;4;1-s )  \\
    &- \frac{1}{3m} \Gamma \left(\frac{m}{2}+3\right) \pFq21
    ( \frac{m}{2}+3,1;5;1-s ) \nonumber \\
&- \frac{1}{6m} \Gamma \left(\frac{m}{2}+3\right)
 s \sbrac{\pFq21 ( \frac{m}{2}+3,2;5;1-s)}  
 \nonumber.
\end{align}
For $m \in (2,\infty)$, the right hand side above has the following evaluation:
      \begin{align}
          \begin{split}
              T_{\Delta_k}(s;m) &= \Gamma(a-1)
              6 a^{-1} (s-1)^{-4} s^{-a} \left[ 
-3 a^2 (-1 + s)^2 (-1 + s + s^a (1 + s))
              \right. \\ & \left.
                  +2 a ((-1 + s)^3 + (-1 + s) s^a (-2 + s (7 + s)))
              \right. \\ & \left.
                  a^3 (s-1)^3 \left(s^a+1\right)-12 s^2 \left(s^a-1\right)
              \right] , \,\,\,
              \text{ where $a =m/2$.}
          \end{split}
    \label{eq:checkrels-explict-TDelta}
      \end{align}
\end{prop}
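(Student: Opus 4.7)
The plan is to derive the first expression \eqref{eq:check-TDelta-using-GF21} directly from the definition
\eqref{eq:varprob-defn-TDleta} of $T_{\Delta_\varphi}$, using the remark \eqref{eq:varprob-rem-Kphi=Kk} to replace everything by $\Delta_k$-quantities, and then pass to the closed form \eqref{eq:checkrels-explict-TDelta} by known reduction formulas for $\pFq21$. Concretely, I would first substitute $y_2 = y_1^{-1}$ into the formula \eqref{eq:b2cal-HDelta-hgfun-copied} for $H_{\Delta_k}$; the three Appell-$F_1$ pieces then collapse to ordinary hypergeometric functions via the reduction \eqref{eq:checkrel-Habc-reduced-to-Kab}, i.e.\ $H_{a,b,c}(u,u^{-1};m) = K_{a+c,b}(u;m)$. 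This converts
\[
H_{\Delta_k}(s,s^{-1};m) = \tfrac{4}{m}(1+\tfrac{m}{2}) K_{3,1}(s;m) - \tfrac{4s}{m} K_{3,2}(s;m) - \tfrac{8}{m} K_{4,1}(s;m)
\]
into an expression involving only $\pFq21(\tfrac{m}{2}+2,1;4;1-s)$, $\pFq21(\tfrac{m}{2}+3,1;5;1-s)$ and $s\,\pFq21(\tfrac{m}{2}+3,2;5;1-s)$.

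Next I would compute $K_{\Delta_k}(1)$ by evaluating \eqref{eq:b2cal-KDeltak-copied} at $s=1$, using $\pFq21(a,b;c;0)=1$, giving \eqref{eq:checkrel-defn-KDelta-at-0}. Combining the two pieces with the prefactor $-(y^{j_m}-1)/(y-1) = -(s^{-m/2}-1)/(s-1)$ from \eqref{eq:varprob-defn-TDleta} yields \eqref{eq:check-TDelta-using-GF21} immediately. The argument is purely algebraic once the $F_1 \to \pFq21$ reduction is in hand.

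For the closed form \eqref{eq:checkrels-explict-TDelta}, I would invoke the explicit evaluations \eqref{eq:checkrels-explict-KDelta} and \eqref{eq:checkrels-explict-HDelta}, which were obtained by symbolic evaluation of the $\pFq21$'s appearing above (these are instances of $\pFq21(a,b;c;z)$ with $b$ and $c$ positive integers, so Mathematica returns a rational-algebraic expression in $s$ and $s^{m/2}$ times $\Gamma(m/2+k)$ for integer $k$). Then $T_{\Delta_k}$ is obtained as
\[
T_{\Delta_k}(s;m) = -K_{\Delta_k}(1)\,\frac{s^{-m/2}-1}{s-1} + H_{\Delta_k}(s,s^{-1};m),
\]
and a common-denominator simplification (of the form $(s-1)^{-4}s^{-m/2}$ times a polynomial in $s$, $s^{m/2}$ and $a=m/2$) delivers \eqref{eq:checkrels-explict-TDelta}.

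The main obstacle is not conceptual but organizational: making sure that the symbolic simplification $H_{\Delta_k}(s,s^{-1};m)$ from \eqref{eq:checkrels-explict-HDelta} and the contribution of $K_{\Delta_k}(1)(s^{-m/2}-1)/(s-1)$ combine into the compact form \eqref{eq:checkrels-explict-TDelta} without spurious singularities as $s\to 1$. The apparent pole at $s=1$ in the prefactor is cancelled by a matching zero in $H_{\Delta_k}(s,s^{-1};m) - K_{\Delta_k}(1)$; verifying this cancellation (and checking that the final rational function in $s,s^{m/2}$ is the same for $m$ an even integer and for general real $m>2$) is the step I would delegate to \textsf{Mathematica}, as the author does. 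Finally, validity for real $m\in(2,\infty)$ follows because both sides of the identity are meromorphic in $m$ with the same set of possible poles (coming only from $\Gamma(m/2+k)$), and they agree on the infinite set $m\in 2\mathbb Z_{\ge 2}$ where the geometric proof applies, hence everywhere by analytic continuation.
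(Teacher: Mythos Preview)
Your proposal is correct and follows essentially the same route as the paper: apply the reduction $H_{a,b,c}(u,u^{-1};m)=K_{a+c,b}(u;m)$ to the three $F_1$-terms in \eqref{eq:b2cal-HDelta-hgfun-copied}, evaluate $K_{\Delta_k}(1)$ at $s=1$, and assemble according to \eqref{eq:varprob-defn-TDleta}; the closed form is then obtained by symbolic evaluation of the resulting $\pFq21$'s. One small remark: for the closed form you propose to take the limit $t\to s^{-1}$ in the explicit expression \eqref{eq:checkrels-explict-HDelta}, whereas the paper explicitly says this is the less convenient route and instead evaluates the three $\pFq21$'s in \eqref{eq:check-TDelta-using-GF21} directly---having already reduced to one-variable hypergeometric functions, there is no need to go back to the two-variable formula and take a limit.
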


\begin{thm}
    The explicit formulas for $K_{\Delta_k}(u;m)$, $H_{\Delta_k}(u,v;m)$ and
    $T_{\Delta_k}(u;m)$ defined in \eqref{eq:checkrels-explict-KDelta},
    \eqref{eq:checkrels-explict-HDelta} and \eqref{eq:checkrels-explict-TDelta}
    satisfy the functional relations stated in Theorem
    $ \ref{thm:varprob-thm-FunRelation-T-to-K-Ha}$,   even for real parameter
    $m$ as long as $m \in (2,\infty)$. 
\end{thm}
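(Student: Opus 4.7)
The plan is to reduce each of the two claimed identities to an identity between elementary functions of $s$ (and $t$) together with the powers $s^{m/2}$, $t^{m/2}$, $(st)^{m/2}$, and then verify it symbolically in the variable $m$, rather than numerically at individual values.

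First, I would substitute the explicit formula \eqref{eq:checkrels-explict-TDelta} for $T_{\Delta_k}(s;m)$ into the operator expressions $\mathrm{I}^{(l)}$ and $\mathrm{II}^{(l)}$ of \eqref{eq:varprob-IT-explicit} and \eqref{eq:varprob-IIT-explicit}, with $j=\tilde j_m=-m/2-1$. Each term in $\sum_{l=1}^{2}\mathrm{I}^{(l)}$ and $\sum_{l=1}^{4}\mathrm{II}^{(l)}$ is built from $T(u)$, $T(u^{-1})$, $T(v)$, $T(v^{-1})$, $T(uv)$ and $T((uv)^{-1})$, multiplied by rational factors in $u,v$ and by $u^j$, $v^j$ or $(uv)^j$. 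After substitution, each side of the two functional relations becomes a rational function in the $\{s,t,s^{m/2},t^{m/2},(st)^{m/2}\}$ variables whose coefficients are rational in $m$ times a single Gamma factor $\Gamma(m/2+1)$.

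Next, I would compare these expressions against the closed forms \eqref{eq:checkrels-explict-KDelta} and \eqref{eq:checkrels-explict-HDelta}. Clearing the denominators $(s-1)^a(t-1)^b(st-1)^c$ on both sides reduces each identity to a polynomial identity in the independent indeterminates $s$, $t$, $s^{m/2}$, $t^{m/2}$, $(st)^{m/2}$, and $m$, which \textsf{Mathematica}'s \texttt{Simplify} or \texttt{PolynomialReduce} can check once and for all, without ever fixing a numerical value of $m$. Once this polynomial identity is established, the relation automatically holds for every $m\in(2,\infty)$ where both sides are defined.

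Alternatively, one can bypass the algebra system by an analytic continuation argument: both sides are meromorphic in $m$, and Theorem \ref{thm:varprob-thm-FunRelation-T-to-K-Ha} together with Section \ref{sec:varcalwrptncvar} gives a genuine geometric proof of the identity at every even integer $m\ge 4$ and, via Section \ref{sec:spfun-hygeofun}, at every odd integer $m\ge 3$; that is already a set with accumulation point at infinity, so the two meromorphic functions of $m$ coincide on $(2,\infty)$.

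The main obstacle I anticipate is purely combinatorial rather than conceptual. The function $T_{\Delta_k}(s;m)$ in \eqref{eq:checkrels-explict-TDelta} expands into several distinct monomial types in $s$ and $s^{m/2}$, and each of $\mathrm{II}^{(2)}$ and $\mathrm{II}^{(4)}$ then multiplies this by substitutions $v\mapsto v^{-1}$, $u\mapsto u^{-1}$, $uv\mapsto (uv)^{-1}$ and divided differences, producing a two-variable expression with a large number of terms that must cancel to match \eqref{eq:checkrels-explict-HDelta}. Organizing the simplification so that the common factor $\Gamma(m/2+1)$ cleanly separates, and so that the three distinct powers $s^{m/2}$, $t^{m/2}$, $(st)^{m/2}$ are collected independently, is the delicate step; once this bookkeeping is arranged, the remaining polynomial identity in $s,t,m$ is straightforward.
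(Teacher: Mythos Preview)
Your primary approach---symbolic verification in \textsf{Mathematica} after recognizing that both sides are rational in the variables $s,t,s^{m/2},t^{m/2},(st)^{m/2}$ with coefficients rational in $m$ times a common Gamma factor---is exactly what the paper does (the paper states the theorem without a proof environment, having already explained in the introduction and at the start of \S4.2 that the verification is carried out by computer algebra once for all real $m$). Your structural analysis of why this works is more explicit than the paper's, but the method is the same.

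Your alternative analytic-continuation argument, however, has a gap: the set $\{3,4,5,\dots\}$ has no accumulation point in $\mathbb{C}$, so the ordinary identity theorem for holomorphic functions does not apply. A function like $\sin(\pi m)$ vanishes at every integer without being zero. To repair this route you would need to exploit the very specific form of the difference---a finite exponential sum $\sum_j P_j(m)\,e^{\alpha_j m}$ with distinct frequencies $\alpha_j$ and rational $P_j$---and argue that such a sum vanishing on all large integers forces each $P_j\equiv 0$; that is true but requires a separate lemma, not merely ``accumulation at infinity''. The paper avoids this issue entirely by verifying the identity symbolically in $m$, which is why it can claim the result is genuinely new for non-integer $m$ rather than a consequence of the geometric proof at integer dimensions.
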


% \begin{align}
%     \begin{split}
%         T_{\Delta_k}(s;m) &= 
%     \frac{s^{-\frac{m}{2}} \Gamma \left(\frac{m}{2}-1\right)}{3 m (s-1)^4}
%     \left[ 
%         \frac{1}{8} m^3 (s-1)^3 \left(s^{m/2}+1\right)-12 s^2
%         \left(s^{m/2}-1\right)
%     \right. \\
%     & \left.
%         -\frac{3}{4}  m^2 (s-1)^2 \left((s+1) s^{m/2}+s-1\right)
% \right. \\
%     & \left.
%         m \left((s-1) (s (s+7)-2) s^{m/2}+(s-1)^3\right)
%     \right]
%     \end{split}
% \end{align}

%\input{hgeofun_old}

\section{ Symbolic computation 
%for the \texorpdfstring{$b_2$}{b2}-term
}
\label{sec:symbloic-computation}
The explicit computation for the $b_2$ term in the resolvent approximation (see
\eqref{eq:intro-b_j-Fsum})
has been performed several times in
different settings and via different methods: 
\cite{MR3540454,MR3194491,MR3359018,MR3148618,MR2956317,MR2907006,
Liu:2015aa,LIU2017138}.
In this section, we shall only outline some key steps. As pointed out before,
It is sufficient to
carried out the computation for noncommutative $m$-tori with the flat
background metric. 
% It has been shown in
% \cite{Liu:2015aa,LIU2017138} that the spectral functions in the modular  curvature
% do not depend on the manifold or the background metric, could be tori, spheres,
% etc. Therefore we only have to perform the computation on flat noncommutative
% tori.
Instead of  Connes's
pseudo-differential calculus \cite{connes1980c}, we use the deformed Widom's
calculus with respect to a flat connection so that the full expression of the
$b_2$
term can be recorded in a fews lines. Notations used in this section are highly
compatible with those in \cite{Liu:2015aa}.
%the references listed above.    

Let us quickly review the algorithm of constructing resolvent approximation for
differential operators via pseudo-differential calculus.
We assume  the pseudo-differential operators considered in the sections are
scalar operators, acting on smooth functions.
Symbols of pseudo-differential operators form a subalgebra inside  
smooth functions on the cotangent bundle which admits a filtration. The
associated graded algebra is called the algebra of complete symbols.
Let $P$ and $Q$ be two pseudo-differential operators
with symbol $p$ and $q$ respectively. Then the symbol of their composition has
a formal expansion
\begin{align*}
    \sigma(PQ) =  p \star q \backsim
    \sum_{j=0}^\infty a_j(p,q),
\end{align*}
where each $a_j(\cdot,\cdot)$ is a bi-differential operator such that $a_j(p,q)$
reduce the total degree by $j$. 
% One of the applications of the  formal product
% formula above is to construct formal inverse for a given symbol of
% a differential operator.  

Consider a second order differential operator $P$ with symbol $\sigma(P) =p_2+p_1 + p_0$,
where $p_j$ homogeneous of order $j$ with $j=0,1,2$. In most of 
pseudo-differential calculi, $a_0$ has no differential, here we assume that
$a_0(p,q) = p q$.  With the initial value $b_0 = (p_2 - \lambda)^{-1}$, one can
recursively construct $b_j$: 
\begin{align}
    \label{eq:b2cal-b1term-gen}
 b_1 &= [ \mathit{a}_0\left(b_0,p_1\right)+\mathit{a}_1\left(b_0,p_2\right)](-b_0)
\\
 b_2 &= 
 [\mathit{a}_0\left(b_0,p_0\right)+\mathit{a}_0\left(b_1,p_1\right)+\mathit{a}_1\left(b_0,p_1\right)+\mathit{a}_1\left(b_1,p_2\right)+\mathit{a}_2\left(b_0,p_2\right)] (-b_0).
\label{eq:b2cal-b2term-gen}
\end{align}
The construction can be continued while the complexity of the right hand sides increases dramatically.    
% One can continue the construction to obtain $b_3$, $b_4$ and so on.

We now specialize on noncommutative tori (of arbitrary dimension $\ge 2$) from
deformation point of view. Let $M = \T^m$, a $m$-dimensional
torus  with the flat Euclidean  metric, and let $\nabla$ be the
Levi-Civita connection.  For two symbols $p = p(x,\xi)$ and $q =q(x,\xi) $, with $(x,\xi) \in T^*\T^m
\cong \T^m \times \R^m$, we have
\begin{align}
    a_j(p,q) = \frac{(-i)^j}{j!} (D^j p) \cdot (\nabla^j q),   
    \,\, j=0,1,2,\dots,
    \label{eq:b2cal-a_j-flatconnection}
\end{align}
where $D = D_\xi$ is the vertical differential so that $ (D^j p)$ is
a contravariant $j$-tensor fields. Deformed contracting (what the $\cdot$
stands for) with a covariant
$j$-tensor field $\nabla^j q$ gives rise to a scalar tensor field which is also
a symbol of order $\deg p+\deg q -j$. For more explanations about the notations, see \cite{Liu:2015aa,LIU2017138}.

Consider the perturbed Laplacian $\Delta_k := k \Delta$,  with the heat operator
as a   contour integral,
\begin{align*}
    e^{-t\Delta_k} = \int_C e^{-t\lambda} (\Delta_k -\lambda)^{-1} d\lambda.
\end{align*}
If we ignore the zero  spectrum of $\Delta_j$, the contour $C$ can be chosen to be   the imaginary axis from $-i\infty$ to $i\infty$. 
% Notice
% that the kernel of $\Delta_k$ is finite dimensional, therefore throwing away
% the zero spectrum will not change the asymptotic expansion, but we will clarify
% this issue when dealing with the zeta function. 
% 
% 
% We would like to  compute the second term ($j=2$) in the asymptotic expansion:
% for all $a\in C^\infty(\T^m_\theta)$, 

Any finite sum $\sum_{j=0}^N b_j$ will give an approximation of the resolvent
$(\Delta_k - \lambda)^{-1}$ which leads to the asymptotic expansion:   
\begin{align*}
    \Tr(a e^{-t\lambda} )\backsim_{t\searrow 0} \sum_{j=0}^\infty t^{(j-m)/2}
    V_j(a,\Delta_k) = \varphi_0 \brac{
a \mathcal R_j
    }, \,\,\, \forall a \in C^\infty(\T^m_\theta),
\end{align*}
where $\mathcal R_j$ is the functional density which can be explicitly computed by $b_j$: 
\begin{align}
    \label{eq:psecal-mathcalR_j}
    \mathcal R_j = \int_{T^*_xM} \frac{1}{2\pi i} \int_C e^{-\lambda} 
    b_j(\xi,\lambda) d\lambda d\xi. 
\end{align}
% where each $V_j(a,\Delta_k)$ is a linear functional in $a$ with functional density 
% $\mathcal R_j$:
% \begin{align*}
%     V_j(a,\Delta_k) = \varphi_0( a R_j), \,\,\, \forall a \in \mathcal A_\theta.
% \end{align*}

The symbol of $\sigma(\Delta_k) = \sigma(k \Delta) = p_2 + p_1 +p_0$ is
a degree two polynomial in $\xi$: 
\begin{align*}
    p_2 = k \abs\xi^2, \,\,\, p_1 =p_0 =0.
\end{align*}

As a function on the cotangent bundle, one can compute the vertical $D$ and
horizontal $\nabla$ differential of $p_2$ as below:
\begin{align}
    \begin{split} 
    (D p_2)_j = 2 \xi_j, \,\, & (D^2 p_2)_{jl} = 2 \mathbf 1_{j l}, 
    \\
    %\,\,\,
    (\nabla p_2)_j = (\nabla k)_j \abs\xi^2, \,\,\,& (\nabla^2 p_2)_{jl} = (\nabla^2
    k)_{jl} \abs\xi^2,
    \end{split}
    \label{eq:b2cal-vetandhordiffofp2}
\end{align}
where $\mathbf 1_{jl}$ stands for the Kronecker-delta symbol. 
By substituting   \eqref{eq:b2cal-a_j-flatconnection} and the derivatives of the
symbols \eqref{eq:b2cal-vetandhordiffofp2} into general formula 
\eqref{eq:b2cal-b2term-gen}, we obtain the expanded $b_2$ term as a function on
the cotangent bundle,
\begin{align}
\begin{split}
    b_2(\xi) = & \,\,  4 r^2 \xi _j \xi _l b_0^3.k^2. (\nabla^2 k)_{l,j}.b_0-r^2. \mathbf 1_{j l}. b_0^2.k. (\nabla^2 k)_{l,j}.b_0 \\
 & \,\,  4 r^2 \xi _j \xi _l b_0^2.k. (\nabla k)_l.b_0. (\nabla k)_j.b_0 
 -4 r^4 \xi _j \xi _l b_0^2.k. (\nabla k)_l.b_0^2.k. (\nabla k)_j.b_0,
\\
&\,\,
2 r^4 .\mathbf 1_{j l} .b_0^2.k.(\nabla k)_l.b_0.\nabla (k)_j.b_0-8 r^4 \xi _j \xi _l b_0^3.k^2.(\nabla k)_l.b_0.(\nabla k)_j.b_0,
\end{split}
\label{eq:b2cal-b2termwithxi}
 \end{align}
 where the summation is taken automatically for repeated indices from $1$ to $m
 =\dim \T^m$. 
 Let $r = \abs\xi^2$, where the length is associated to the flat Riemannian metric.
 We will compute the integration over the fiber $T_x^* \T^m$ using spherical
 coordinates:
 \begin{align*}
     \int_{T_x^* \T^m} b_2(\xi) d\xi = \int_0^\infty b_2(r) r^{m-1}dr,
 \end{align*}
 with  
 \begin{align*}
     b_2(r) = \int_{\abs\xi^2 = 1} b_2(\xi) d\mathbf{s},
 \end{align*}
 where $d\mathbf s$  the  standard volume form for the unit sphere in
 $\R^{m-1}$.

 \begin{lem}
     Let $d\mathbf s$ be the  standard volume form for the unit sphere in
     $\R^{m-1}$, we have
     \begin{align*}
         \mathrm{Vol}(S^{m-1}) &= 
         \int_{S^{m-1}}    d\mathbf{s} = \frac{2 \pi^{m/2}}{\Gamma(m/2)},
\\
         \int_{S^{m-1}} \xi_j \xi_l   d\mathbf{s} &=
         \frac{\pi^{m/2}}{\Gamma(1+m/2)} \mathbf{1}_{jl}
         = \mathrm{Vol}(S^{m-1}) \frac1m \mathbf 1_{jl}.
     \end{align*}
 \end{lem}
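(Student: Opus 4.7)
The plan is to reduce both identities to the classical Gaussian integral $\int_{\R^m} e^{-|\xi|^2}\,d\xi = \pi^{m/2}$ by evaluating it in two different ways: Cartesian coordinates (giving the right hand side) and polar coordinates (exposing the surface measure on $S^{m-1}$). This is the standard textbook derivation, but I will organize it so both identities drop out of a single Gaussian computation, plus an $O(m)$-symmetry argument for the tensorial structure of the second identity.

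For the first identity, I would switch to polar coordinates $\xi = r\omega$ with $r = |\xi| \in [0,\infty)$ and $\omega \in S^{m-1}$, so $d\xi = r^{m-1}\,dr\,d\mathbf{s}(\omega)$. This gives
\begin{align*}
\pi^{m/2} \;=\; \int_{\R^m} e^{-|\xi|^2} d\xi \;=\; \mathrm{Vol}(S^{m-1})\int_0^\infty e^{-r^2} r^{m-1}\,dr \;=\; \mathrm{Vol}(S^{m-1})\cdot \tfrac{1}{2}\Gamma(m/2),
\end{align*}
after the substitution $t = r^2$ identifies the $r$-integral with $\tfrac{1}{2}\Gamma(m/2)$. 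Solving for $\mathrm{Vol}(S^{m-1})$ yields the stated formula.

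For the second identity, I would first invoke $O(m)$-invariance: the measure $d\mathbf{s}$ is rotation invariant, so the matrix $M_{jl}\defeq \int_{S^{m-1}} \xi_j\xi_l\,d\mathbf{s}$ commutes with the natural $O(m)$-action, forcing $M_{jl} = c\,\mathbf{1}_{jl}$ for a single scalar $c$. Taking the trace and using $\sum_j \xi_j^2 = 1$ on $S^{m-1}$ gives $mc = \mathrm{Vol}(S^{m-1})$, i.e.\ $c = \mathrm{Vol}(S^{m-1})/m$, which already yields the right equality in the lemma. To obtain the closed form $c = \pi^{m/2}/\Gamma(1+m/2)$, I either substitute the formula for $\mathrm{Vol}(S^{m-1})$ from the first step and simplify using $\Gamma(1+m/2) = (m/2)\Gamma(m/2)$, or I rerun the Gaussian trick with the weight $\xi_j\xi_l$: evaluate $\int_{\R^m} \xi_j\xi_l\,e^{-|\xi|^2} d\xi$ in Cartesian coordinates (which is $\tfrac{1}{2}\pi^{m/2}\mathbf{1}_{jl}$ by factoring) and in polar coordinates (which gives $M_{jl}\cdot \tfrac{1}{2}\Gamma(m/2+1)$), and equate.

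There is no real obstacle; the only thing to be slightly careful about is the apparent typo in the statement (``unit sphere in $\R^{m-1}$'' should be ``$S^{m-1}\subset\R^m$'', as is clear from the context of the fiber integral $\int_{T^*_x\T^m}$ and the factor $\frac{1}{m}$ appearing in the second identity). The argument applies verbatim to $S^{m-1}\subset \R^m$, and both claimed equalities then follow from the two displays above.
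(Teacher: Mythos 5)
Your proof is correct and complete: the Gaussian-integral evaluation in Cartesian versus polar coordinates gives the volume formula, and the $O(m)$-invariance plus trace argument pins down $\int_{S^{m-1}}\xi_j\xi_l\,d\mathbf{s}$, with the two closed forms reconciled by $\Gamma(1+m/2)=(m/2)\Gamma(m/2)$. The paper itself offers no argument (its proof reads ``elementary calculus, left to the reader''), and what you wrote is exactly the standard derivation being alluded to; you are also right that ``unit sphere in $\R^{m-1}$'' is a typo for $S^{m-1}\subset\R^m$.
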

 \begin{proof}
     Elementary calculus, left to the reader.
 \end{proof}
Upto an overall constant factor $\mathrm{Vol}(S^{m-1})$, $b_2(r)
 = \int_{S^{m-1}} b_2 (\xi)
 d \mathbf s$ equals 
 \begin{align}
 \begin{split}
 b_2(r)  = &\,\,
 -r^2 \mathbf 1_{j l}. b_0^2.k.(\nabla^2 k)_{l,j}.b_0 + \frac{4 r^4 \mathbf 1_{j
 l} b_0^3.k^2.(\nabla^2 k)_{l,j}.b_0}{m} \\
 &\,\,
 +2 r^4 \mathbf 1_{j l}. b_0^2.k.(\nabla k)_l.b_0.(\nabla k)_j.b_0 +
 \frac{4 r^4 \mathbf 1_{j l}. b_0^2.k.(\nabla k)_l.b_0.(\nabla k)_j.b_0}{m} \\
 &\,\, 
 -\frac{8 r^6 \mathbf 1_{j l}. b_0^3.k^2.(\nabla k)_l.b_0.(\nabla k)_j.b_0}{m}
 -\frac{4 r^6 \mathbf 1_{j l}. b_0^2.k.(\nabla k)_l.b_0^2.k.(\nabla
 k)_j.b_0}{m}   . 
 \end{split}
\label{eq:b2cal-b_2inr}
 \end{align}
%   The last step is to apply the integration lemma developed at the beginning of
%   the paper.   
%   $ = \Tr \Hess (k) \defeq -\Delta k$ and $\mathbf 1_{lj} (\nabla k)_l
%   (\nabla k)_j = 
%   \abrac{\nabla k , \nabla k}$ equals the squared norm of the gradient of $k$. 
 The summation over $i,j$ stands for contraction between contravariant and
 covaraint tensors. 
%  \begin{align*}
%  (\nabla^2 k)_{i,j} \mathbf 1_{i,j} = (\nabla^2 k) \cdot g^{-1} = - \Delta k,
%  \,\,
%  (\nabla k)_i (\nabla k)_j \mathbf i_{i,j} = (\nabla k \nabla k) \cdot g^{-1}
%  = \abrac{\nabla k ,\nabla k}_{g^{-1}} .
%  \end{align*}
%   Notice that the contraction from the second order differential leads to the
%   Laplacian $\Delta k$ and from the first order differential gives  rise to the
%   energy term $\abrac{\nabla k , \nabla k}$, 
  To be precise: 
  \begin{align}
      \label{eq:psecal-contractiontoLaplacian} 
  (\nabla^2 k)\cdot g^{-1}  & \defeq \sum \mathbf 1_{l j} (\nabla^2 k)_{l,j}=
  \Tr \Hess (k)
  = -\Delta k, \\
  (\nabla k \nabla k) \cdot g^{-1} &\defeq \sum \mathbf 1_{lj} (\nabla k)_l
  (\nabla k)_j
  = \abrac{\nabla k , \nabla k}_g,
  \label{eq:psecal-contractiontonormofdk}
  \end{align}
  where $g^{-1}$ stands for the metric on the cotangent bundle. 
 We now give some examples on how to apply integration lemma developed at
 the beginning of the paper. Recall $b_0 = (k r^2 -\lambda)^{-1}$. We first
 move powers of $k$  in front, for instance,
 \begin{align*}
     r^2 \mathbf 1_{j l}. b_0^2.k.(\nabla^2 k)_{l,j}.b_0
     &= k \brac{ 
 r^2 \mathbf 1_{j l}. b_0^2.(\nabla^2 k)_{l,j}.b_0 } \\
 r^6 \mathbf 1_{j l}. b_0^2.k.(\nabla k)_l.b_0^2.k.(\nabla k)_j.b_0
 &= k^2 \mathbf y_1 \brac{
 \mathbf 1_{j l}. b_0^2.(\nabla k)_l.b_0^2.(\nabla k)_j.b_0
 }
 \end{align*}
 where in the second line, $\mathbf y_1$ is the conjugation operator acting on
 the  factor  $(\nabla k)_l$, which allows us to move the $k$ between
 $(\nabla k)_l$ and $(\nabla k)_j$ in front. Then we apply Proposition
 \ref{prop:hgeofun-one-var-family}:
 \begin{align*}
    & \,\, \int_0^\infty \frac{1}{2 \pi i} \int_C e^{-\lambda}
     r^2 \mathbf 1_{j l}. b_0^2.k.(\nabla^2 k)_{l,j}.b_0 d\lambda
     (r^{m-1} dr) 
     \\ = & \,\,
     k^{-(m/2+1)}
     K_{2,1}(\mathbf y;m)(\nabla^2 k) \cdot g^{-1}, 
 \end{align*}
 and Proposition  \ref{prop:hgeofun-two-var-family}:
 \begin{align*}
     & \,\, \int_0^\infty \frac{1}{2 \pi i} \int_C e^{-\lambda}
\mathbf 1_{j l}. b_0^2.(\nabla k)_l.b_0^2.(\nabla k)_j.b_0
d\lambda (r^{m-1} dr)  
\\ = & \,\,
k^{-(m/2+3)}
H_{2,2,1}(\mathbf y_1, \mathbf y_2 ;m)(\nabla k \nabla k) \cdot g^{-1}. 
 \end{align*}

 The one variable spectral function is obtained by integrating the
 first two terms in \eqref{eq:b2cal-b_2inr}:
 \begin{align}
     K_{\Delta_k} (y;m) = \frac4m K_{3,1}(y;m) - K_{2,1}(y;m)
     \label{eq:b2cal-KDeltak}
 \end{align}
 and for the two variable function, it comes from the last four terms in
 \eqref{eq:b2cal-b_2inr}:
 \begin{align}
\label{eq:b2cal-HDeltak}
\begin{split}
      H_{\Delta_k} (y_1 , y_2 ;m)  &=
 (\frac{4 }{m}+2) H_{2,1,1}\left(y_1,y_2;m\right)  -\frac{4 y_1
     H_{2,2,1}\left(y_1,y_2 ;m\right)}{m}
     \\ &  
     -\frac{8
     H_{3,1,1}\left(y_1,y_2;m\right)}{m}.
     %+2 H_{2,1,1}\left(y_1,y_2\right)
\end{split}
    \end{align}
    Let us summerize the computation of the whole section as a theorem.
\begin{thm}
    \label{thm:psecal-R_Delta_k}
    For the perturbed Laplacian $\Delta_k = k \Delta$, a closed form of the
    functional $V_2(\cdot,\Delta_k)$ is given by
    \begin{align*}
        V_2(a, \Delta_k) = \varphi_0( a R_{\Delta_k}), \,\,\,
        \forall a \in C^\infty(\T^m_\theta),
    \end{align*}
    with $R_{\Delta_k} \in C^\infty(\T^m_\theta)$. Upto an overall constant
    $\op{Vol}(S^{m-1})/2$, 
    \begin{align}
    \label{eq:psecal-R_Delta_k}
        R_{\Delta_k} = k^{j_m} K_{\Delta_k} (\mathbf y ;m) (\nabla^2 k)
        \cdot g^{-1} + k^{j_m-1}
        H_{\Delta_k}(\mathbf y_1, \mathbf y_2 ;m) (\nabla k \nabla k) \cdot
        g^{-1}, 
    \end{align}
    where $j_m = -m/2$, $\mathbf y$ and $\mathbf y_l$ with $l=1,2$ are the
    modular operators, see 
    \eqref{eq:hgeofun-y_j(k)}. Contractions with the metric $g^{-1}$ are
    explained in \eqref{eq:psecal-contractiontoLaplacian} and
    \eqref{eq:psecal-contractiontonormofdk}. Functions
    $K_{\Delta_k}$ and $H_{\Delta_k}$ are
    defined in \eqref{eq:b2cal-KDeltak} and \eqref{eq:b2cal-HDeltak}
    respectively.
\end{thm}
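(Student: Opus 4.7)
The plan is to realize $V_2(a,\Delta_k) = \varphi_0(a\,\mathcal R_2)$ via the pseudo-differential approximation~\eqref{eq:psecal-mathcalR_j} of the resolvent, compute the symbol $b_2$ explicitly using the deformed Widom calculus, and then reduce the resulting contour/radial integrals to the spectral functions $K_{\Delta_k}$ and $H_{\Delta_k}$ by direct appeal to Propositions~\ref{prop:hgeofun-one-var-family} and~\ref{prop:hgeofun-two-var-family}. Since $\Delta_k = k\Delta$ has scalar leading symbol $p_2 = k|\xi|^2$ with $p_1 = p_0 = 0$, the recursion~\eqref{eq:b2cal-b2term-gen} collapses to $b_1 = -a_1(b_0,p_2)\,b_0$ and $b_2 = -\bigl(a_1(b_1,p_2) + a_2(b_0,p_2)\bigr)b_0$, where $b_0 = (k|\xi|^2-\lambda)^{-1}$ and $a_j(p,q) = \frac{(-i)^j}{j!}(D^j p)\cdot(\nabla^j q)$ by~\eqref{eq:b2cal-a_j-flatconnection}. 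Substituting the vertical/horizontal derivatives $Dp_2 = 2\xi$, $D^2 p_2 = 2g^{-1}$, $\nabla p_2 = |\xi|^2\nabla k$, $\nabla^2 p_2 = |\xi|^2\nabla^2 k$ from~\eqref{eq:b2cal-vetandhordiffofp2}, and systematically recording the noncommutativity using the contraction notation, I obtain the six-term expression~\eqref{eq:b2cal-b2termwithxi}.

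Next I perform the fiber integral $\int_{T_x^*\T^m} b_2(\xi)\,d\xi$ in spherical coordinates $\xi = r\omega$. The angular part is elementary: the identities $\int_{S^{m-1}}d\mathbf{s} = \op{Vol}(S^{m-1})$ and $\int_{S^{m-1}} \omega_j\omega_l\,d\mathbf{s} = m^{-1}\op{Vol}(S^{m-1})\mathbf{1}_{jl}$ reduce each summand of $b_2(\xi)$ to a scalar multiple of either the Laplacian-type contraction $(\nabla^2 k)\cdot g^{-1}$ or the gradient-norm contraction $(\nabla k\,\nabla k)\cdot g^{-1}$ defined in~\eqref{eq:psecal-contractiontoLaplacian}--\eqref{eq:psecal-contractiontonormofdk}. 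This step yields~\eqref{eq:b2cal-b_2inr}, pulling out the overall factor $\op{Vol}(S^{m-1})$ that appears in the theorem's normalization.

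At this point the only remaining integral is the radial/contour integral $\int_0^\infty\!\!\frac{1}{2\pi i}\!\int_C e^{-\lambda}(\cdots)\,d\lambda\,r^{m-1}dr$ acting on a product of the form $b_0^{l_0}\cdot \rho_1\cdot b_0^{l_1}$ or $b_0^{l_0}\cdot\rho_1\cdot b_0^{l_1}\cdot\rho_2\cdot b_0^{l_2}$ (after the intervening $k$'s are transported leftward). To prepare for the hypergeometric propositions, I pull every power of $k$ to the far left using~\eqref{eq:hgeofun-kjandy_j}, i.e.\ $k^{(j)} = k^{(0)}\mathbf y_1\cdots\mathbf y_j$, so that the factor $k^{-(d_m+1)} = k^{j_m}$ (resp.\ $k^{j_m-1}$) arising from the hypergeometric integral combines with the prefactors of $k$ already present to give the correct powers in~\eqref{eq:psecal-R_Delta_k}. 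Then Proposition~\ref{prop:hgeofun-one-var-family} with $(a,b) \in \{(2,1),(3,1)\}$ evaluates the two one-variable terms, and Proposition~\ref{prop:hgeofun-two-var-family} with $(a,b,c)\in\{(2,1,1),(2,2,1),(3,1,1)\}$ evaluates the three two-variable terms. Collecting coefficients matches~\eqref{eq:b2cal-KDeltak} and~\eqref{eq:b2cal-HDeltak}.

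The main bookkeeping hazard---and the step where errors most easily creep in---is the correct placement of the conjugation operators $\mathbf y_j$ when transporting powers of $k$ across the gradients $\nabla k$ and $\nabla^2 k$. For instance, in a term such as $b_0^2\,k\,(\nabla k)_l\,b_0^2\,k\,(\nabla k)_j\,b_0$, the middle factor of $k$ must be transported leftward as $\mathbf y_1(k)$ acting on the first gradient slot, which is precisely what~\eqref{eq:hgeofun-y_j(k)} encodes. A small substitution $r\mapsto r^2$ is also required to align the volume element $r^{m-1}\,dr$ with the normalization $r^{2d_m+1}\,dr$ used in Propositions~\ref{prop:hgeofun-one-var-family} and~\ref{prop:hgeofun-two-var-family}; this accounts for the factor of $1/2$ in $\op{Vol}(S^{m-1})/2$. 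With these operator-theoretic gymnastics done carefully term-by-term, the theorem follows.
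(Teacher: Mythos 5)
Your proposal follows essentially the same route as the paper's Section~\ref{sec:symbloic-computation}: the collapse of the recursion \eqref{eq:b2cal-b2term-gen} when $p_1=p_0=0$, the angular averaging via $\int_{S^{m-1}}\omega_j\omega_l\,d\mathbf{s}=m^{-1}\op{Vol}(S^{m-1})\mathbf 1_{jl}$, the transport of the interior powers of $k$ to the left via \eqref{eq:hgeofun-kjandy_j}, and the identification of the radial/contour integrals with $K_{a,b}$ and $H_{a,b,c}$ for exactly the index values $(2,1),(3,1)$ and $(2,1,1),(2,2,1),(3,1,1)$ appearing in \eqref{eq:b2cal-KDeltak}--\eqref{eq:b2cal-HDeltak}. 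Your accounting for the factor $\op{Vol}(S^{m-1})/2$ via the substitution $r\mapsto r^2$ also matches the paper's normalization, so the argument is correct and not materially different.
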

% Consider the operator 
% \begin{align*}
%     \Delta_\varphi \defeq k^{1/2} \Delta k^{1/2} =
%     k^{-1/2} (k \Delta) k^{1/2} = k^{-1/2}  \Delta_k k^{1/2}
% \end{align*}
% which is the
% degree zero Laplacian  introduced in \cite[Lemma 1.11]{MR3194491}.

In terms of hypergeometric functions, 
\begin{align}
    \begin{split}
        K_{\Delta_k}( s ;m) = & \,\,
    -\frac{1}{2} \Gamma \left(  m/2 +1\right)
     \pFq21 \left(  m/2 +1,1,3,1-s\right) 
     \\ &\,\,
     + \frac{2 \Gamma \left(m/2  +2\right)}{3m}  \pFq21\left(m/2  +2,1,4,1-s
     \right),
     \end{split}
\label{eq:b2cal-KDelta-hgfun}
    \end{align}
    and
\begin{align}
    \begin{split}
     H_{\Delta_k}( s,t ;m) = 
& \,\, \frac{1}{6m}    
 2 (m+2) \Gamma \left(m/2+2\right) F_1\left(m/2+2;1,1;4;1-s t,1-s\right)
 \\ & \,\,
-\frac{1}{6m}\Gamma \left(m/2+3\right)  2 F_1\left(m/2+3;1,1;5;1-s
t,1-s\right)\\ & \,\,
-\frac{1}{6m}\Gamma \left(m/2+3\right) s F_1\left(m/2+3;1,2;5;1-s t,1-s\right).
     \end{split}
\label{eq:b2cal-HDelta-hgfun}
   \end{align}

\appendix                                     

\section{Hypergeometric functions}

Hypergeometric functions had been studied intensively in the nineteenth
century. The pioneers include Gauss (1813), Ernst Kummer (1836) and Riemann
(1857), etc. The two variable extension of the hypergeometric functions has
four different types known as Appell's $F_1$ to $F_4$ functions. 
In this appendix, we only collect  some basic knowledge of $\pFq21$ and $F_1$
functions that are related to our exploration of modular curvature.
Most of the
identities  quoted in this section can be found in \cite{MR0058756},
\cite{MR2723248}, \cite{appell1925fonctions} and \cite{appell1926fonctions}. 

\subsection{Gauss Hypergeometric functions}
For $\abs z<1$, the (Gauss) hypergeometric function $\pFq21(a,b;c;z)$ is
represented by the hypergeometric series
\begin{align}
    \label{eq:hgeofun-defn-GF21-series}
    \pFq21(a,b;c;z) = \sum_{n=0}^\infty \frac{
    (a)_n (b)_n
    }{
        (c)_n
    } \frac{ z^n }{ n!} ,
\end{align}
where the coefficients are given by  Pochhammer symbols: 
\begin{align}
    (q)_n = \frac{ \Gamma(q+n)}{ \Gamma(q)}.
    \label{eq:hggeofun-defn-Pochh-sym}
\end{align}
What we need in the paper is the   following Euler type integral
representation:
\begin{align}
 \pFq21(a,b;c; z)
 = \frac{\Gamma(c) }{ \Gamma(b) \Gamma(c-b)}     
 \int_0^1 (1-t)^{c-b-1} t^{b-1} (1-z t)^{-a} dt.
    \label{eq:hgeo-defn-pFq21}   
\end{align}
It is a solution of Euler's hypergeometric differential equation
\begin{align}
    z(1-z) \frac{d^2 w}{dz^2} + \brac{
        c-(a+b+1)z
    } \frac{dw}{dz} - ab w =0 
    \label{eq:hgeofun-HGODE}
\end{align}
For $F \defeq \pFq21(a,b;c;z) $, there are six associated contiguous functions
obtained by applying $\pm1$ on only one of the parameters $a,b$ and $c$. We
abbreviate them as 
$F(a+)$, $F(b+)$, $F(c-)$, etc.  Gauss showed that $F$ can be written as
a linear combination of any two of its contiguous functions, which leads to
$15$ ($6$ choose $2$) relations. They can be derived from the differential
relations among the
family 
\begin{align}
    \label{eq:hgfun-diff-relations-GF21-abc+1}
    \frac{d}{dz}  ( \pFq21(a,b;c;z) ) = \frac{a b}{c} \pFq21(a+1,b+1;c+1;z),
\end{align}
also
\begin{align}
    \label{eq:hgfun-diff-relations-GF21-a+1}
    F(a+) &= F + \frac1a z\frac{d}{dz} F, \\
    \label{eq:hgfun-diff-relations-GF21-b+1}
    F(b+) &= F + \frac1b z\frac{d}{dz} F, \\
    F(c-) &= F + \frac1c z\frac{d}{dz} F. 
    \label{eq:hgfun-diff-relations-GF21-c-1}
\end{align}
Combine with the second order ODE \eqref{eq:hgeofun-HGODE}, we have 
\begin{prop}
    \label{prop:hgeofun-cont-relations}
 One can read all $15$ relations among the contiguous functions  by equating
 any two lines of the right hand
 side of \normalfont{Eq.} \eqref{eq:hgeofun-cont-relations}: 
\begin{align}
    \begin{split}
        z \frac{dF}{dz} &= a \frac{ab}{c} F(a+,b+,c+) \\
        &=a(F(a+) -F) \\
        &=b(F(b+) -F) \\
        &=(c-1) (F(c-) -F) \\
        &= \frac{
            (c-a) F(a-) + (a-c+bz)F
        }{ 1-z}  \\
&= \frac{
            (c-b) F(b-) + (b-c+az)F
        }{ 1-z} \\
        &= z \frac{
            (c-a)(c-b) F(c+) + c(a+b-c)F
        }{ c(1-z)}.
    \end{split}
    \label{eq:hgeofun-cont-relations}
\end{align}
\end{prop}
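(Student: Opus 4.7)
The plan is to verify the chain of seven equalities in \eqref{eq:hgeofun-cont-relations}; once this is established, pairwise equating any two of lines 2--7 (line 1 is excluded since it involves the triple shift $F(a+,b+,c+)$) produces a two-term identity among two of the six single-shift functions $F(a\pm), F(b\pm), F(c\pm)$, giving exactly $\binom{6}{2} = 15$ distinct Gauss contiguous relations.

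Lines 1 through 4 are restatements of the differential identities \eqref{eq:hgfun-diff-relations-GF21-abc+1}--\eqref{eq:hgfun-diff-relations-GF21-c-1} already recorded in the excerpt. Multiplying \eqref{eq:hgfun-diff-relations-GF21-abc+1} by $z$ gives line 1; rearranging \eqref{eq:hgfun-diff-relations-GF21-a+1} and \eqref{eq:hgfun-diff-relations-GF21-b+1} gives lines 2 and 3; and \eqref{eq:hgfun-diff-relations-GF21-c-1} gives line 4. Each is in turn verified by termwise differentiation of the series \eqref{eq:hgeofun-defn-GF21-series} together with the Pochhammer identity $(x+n)(x)_n = x(x+1)_n$ applied to $x = a, b$ and $c-1$ respectively, so that for instance
\[
a\bigl(F(a+)-F\bigr) = \sum_{n\geq 0}\frac{a[(a+1)_n - (a)_n](b)_n}{(c)_n\, n!} z^n = \sum_{n\geq 0}\frac{n (a)_n (b)_n}{(c)_n\, n!}z^n = z F'(z).
\]

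For the three lowering lines 5--7 I plan to use the ODE \eqref{eq:hgeofun-HGODE} as the proposition suggests. Introducing the Euler operator $\theta = z\partial_z$, one rewrites \eqref{eq:hgeofun-HGODE} in the factored form $\theta(\theta+c-1)F = z(\theta+a)(\theta+b)F$. Combining this with the operator identities $(\theta+a)F = aF(a+)$, $(\theta+b)F = bF(b+)$, $(\theta+c-1)F = (c-1)F(c-)$ (which are lines 2--4 written as operator equations), and applying the ODE to the shifted triple $(a-1,b;c)$, yields a three-term relation of the shape $(c-a)F(a-) + \alpha(z) F + \beta(z) F(a+) = 0$ with $\alpha, \beta$ polynomial in $z$. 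Substituting $aF(a+) = aF + zF'$ into this relation and solving for the combination $(c-a)F(a-) + (a-c+bz)F$ delivers $(1-z)zF'$, which is line 5. Line 6 is obtained by the $a\leftrightarrow b$ symmetry of both the ODE and the series. For line 7, the analogous three-term relation linking $F(c+)$, $F$, and $F(c-)$ is extracted by the same factored-ODE procedure, using the series identity $(c)_n/(c+1)_n = c/(c+n)$ to relate $F(c+)$ to the lowering operator inverse of $\theta+c-1$.

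The main obstacle is writing down the backward three-term relation $(c-a)F(a-) + \alpha F + \beta F(a+) = 0$ cleanly. The most efficient route is to apply $(\theta+a-1)F(a-) = (a-1)F$ (line 2 with parameter triple $(a-1,b;c)$) and combine with the factored ODE; the bookkeeping of the resulting $z$-polynomial coefficients is the only delicate step, after which everything is algebraic reshuffling. Once all seven expressions agree, the 15 Gauss contiguous relations are read off mechanically by equating the $\binom{6}{2}$ pairs drawn from lines 2--7.
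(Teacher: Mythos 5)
Your proposal is correct and follows essentially the same route the paper indicates: lines 1--4 are the differential relations \eqref{eq:hgfun-diff-relations-GF21-abc+1}--\eqref{eq:hgfun-diff-relations-GF21-c-1} verified termwise on the series, and lines 5--7 come from combining those (in Euler-operator form $(\theta+a)F=aF(a+)$, etc.) with the factored ODE $\theta(\theta+c-1)F=z(\theta+a)(\theta+b)F$ applied to the shifted parameter triples --- and the bookkeeping you flag does close exactly as claimed. Two small typographical points worth noting: line 1 of the displayed chain should read $z\frac{ab}{c}F(a+,b+,c+)$ rather than $a\frac{ab}{c}F(a+,b+,c+)$, and \eqref{eq:hgfun-diff-relations-GF21-c-1} should carry the factor $\frac{1}{c-1}$ rather than $\frac{1}{c}$, consistent with your Pochhammer step at $x=c-1$.
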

There are other type of symmetries among the hypergeometric family. For
example, under fractional linear transformation
\begin{align}
    \begin{split}
        \pFq21(a,b;c;z)& = (1-z)^{-b} \pFq21(c-a,b;c;\frac{z}{z-1}) \\
        \pFq21(a,b;c;z) &= (1-z)^{-a} \pFq21(a,c-b;c;\frac{z}{z-1}).
    \label{eq:hgeofun-Pfaff-1}
    \end{split}
\end{align}
They are known as Pfaff transformations. Then the Euler transformation 
\begin{align}
    \label{eq:hgeofun-Euler}
    \pFq21(a,b;c;z)&= (1-z)^{c-a-b} \pFq21(c-a,c-b;c;z)
\end{align}
follows quickly.

\begin{prop}
    For $p\in \Z_{>0}$, 
    \begin{align}
        \pFq21(a,1;c;z) = \frac{(1-c)_p}{(a-c+1)_p} \pFq21(a,1;c-p;z)
        + \frac1z \sum_{k=1}^p \frac{(1-c)_k}{(a-c+1)_k} 
        \brac{
            \frac{z-1}{z}
        }^{k-1}.
        \label{eq:hgfun-EVGF21a1c}
    \end{align}
    Since $ \pFq21(a,1;1;z) = (1-z)^{-a}$, when $a$ and $c$ belong to the natural domain of the right hand side of
    \eqref{eq:hgfun-EVGF21a1c}, it provides a symbol evaluation for
    $\pFq21(a,1;c;z)$.
%$a\in \mathbb{C}$ and $c \in \Z_{>0}$. 
\end{prop}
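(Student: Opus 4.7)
The plan is to derive the identity as the result of iterating a single one-step contiguous relation that decreases $c$ by one, then telescoping the resulting non-homogeneous recurrence $p$ times. The crucial simplification comes from setting $b=1$ in Gauss's contiguous relations: the ``lowering'' function $F(b-)$ in the sixth line of \eqref{eq:hgeofun-cont-relations} collapses to $\pFq21(a,0;c;z) = 1$, turning that line into a scalar expression $zF'_c = [(c-1) + (1-c+az)F_c]/(1-z)$ where $F_c \defeq \pFq21(a,1;c;z)$. This is the only relation in Proposition \ref{prop:hgeofun-cont-relations} that becomes genuinely ``closed'' under $b=1$, and it is what powers the whole argument.

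First, I equate this with the fourth line $zF'_c = (c-1)(F_{c-1} - F_c)$ of \eqref{eq:hgeofun-cont-relations}, eliminate $zF'_c$, and solve the resulting linear equation for $F_c$ as an affine function of $F_{c-1}$. This produces a one-step recurrence $F_c = \alpha_c F_{c-1} + \beta_c$ with explicit rational coefficients $\alpha_c, \beta_c$ in $a, c, z$; this is the case $p=1$ of the stated identity, and it fixes the normalization for everything that follows.

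Next, I iterate the recurrence $p$ times. The standard unrolling gives
\begin{equation*}
F_c = \Bigl(\prod_{j=0}^{p-1} \alpha_{c-j}\Bigr) F_{c-p} + \sum_{k=0}^{p-1} \Bigl(\prod_{j=0}^{k-1}\alpha_{c-j}\Bigr)\beta_{c-k}.
\end{equation*}
The numerator factors of the successive $\alpha_{c-j}$ telescope into the Pochhammer symbol $(1-c)_p$ (up to sign) and the denominator factors into $(a-c+1)_p$; an analogous partial-product calculation inside the sum, combined with the reindex $k \mapsto k-1$, produces $(1-c)_k/(a-c+1)_k$ together with the powers of $(z-1)/z$ and the overall $1/z$ asserted in the statement.

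The main obstacle is careful sign and $z$-factor bookkeeping: reconciling $(c-1)$ with $(1-c)$, and tracking how the $(1-z)$ appearing in the denominator of the reduced sixth relation combines with the $z$ from the fourth to produce the correct powers of $(z-1)/z$. A useful sanity check is the specialization $p = 1$, $a = c$, where $\pFq21(a,1;a;z) = (1-z)^{-1}$ is elementary; matching both sides of the one-step recurrence in this case unambiguously pins down the signs and any accompanying geometric factors before one commits to the full telescoping identification. If the telescoping as computed produces an extra power of $(z-1)/z$ attached to $\pFq21(a,1;c-p;z)$ rather than being absorbed into the Pochhammer ratio, that suggests the formula should be read with such a factor included; I would flag this at the sanity-check step and finish by induction on $p$, treating the base case above as the inductive seed.
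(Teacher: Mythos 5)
The paper states this proposition without proof, so there is no argument to compare against; your two-relation elimination is the natural route and it does work. Carrying it out explicitly: with $b=1$ the sixth line of \eqref{eq:hgeofun-cont-relations} reads $(1-z)\,zF_c' = (c-1) + (1-c+az)F_c$ with $F_c \defeq \pFq21(a,1;c;z)$, and equating with the fourth line $zF_c' = (c-1)(F_{c-1}-F_c)$ gives, after the cancellation $(1-z)(c-1) + (1-c+az) = z(a-c+1)$, the one-step recurrence
\begin{equation*}
F_c \;=\; \frac{1-c}{a-c+1}\cdot\frac{z-1}{z}\,F_{c-1} \;+\; \frac{1}{z}\cdot\frac{1-c}{a-c+1}.
\end{equation*}
Unrolling $p$ times exactly as in your display, the product of the $\alpha$'s telescopes to $\frac{(1-c)_p}{(a-c+1)_p}\left(\frac{z-1}{z}\right)^p$, and the inhomogeneous part, after the reindex $k\mapsto k-1$, becomes $\frac1z\sum_{k=1}^p\frac{(1-c)_k}{(a-c+1)_k}\left(\frac{z-1}{z}\right)^{k-1}$, which is precisely the sum in \eqref{eq:hgfun-EVGF21a1c}.

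The caveat you flag at the end is not hypothetical: the factor $\left(\frac{z-1}{z}\right)^p$ genuinely multiplies $\pFq21(a,1;c-p;z)$ and is absent from \eqref{eq:hgfun-EVGF21a1c} as printed. A quick check: for $a=c=3$, $z=1/2$, $p=1$ one has $\pFq21(3,1;3;1/2)=2$ and $\pFq21(3,1;2;1/2)=3$; the right-hand side of \eqref{eq:hgfun-EVGF21a1c} evaluates to $-2\cdot 3 + 2\cdot(-2) = -10$, whereas inserting the factor $\frac{z-1}{z}=-1$ gives $6-4=2$, as it must. So your argument is correct, but what it proves is the corrected identity
\begin{equation*}
\pFq21(a,1;c;z) = \frac{(1-c)_p}{(a-c+1)_p}\left(\frac{z-1}{z}\right)^{p} \pFq21(a,1;c-p;z) + \frac1z\sum_{k=1}^p\frac{(1-c)_k}{(a-c+1)_k}\left(\frac{z-1}{z}\right)^{k-1},
\end{equation*}
and the proposition as stated contains a typo. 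With the correction the concluding remark survives: for integer $c$ one may take $p=c-1$ and use $\pFq21(a,1;1;z)=(1-z)^{-a}$ to obtain a closed-form evaluation. I would promote your ``sanity check'' from a contingency to the actual base case of the induction, since it is what pins down the missing factor.
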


\subsection{Appell Hypergeometric functions}
The hypergeometric series \eqref{eq:hgeofun-defn-GF21-series} has a variety of
generalizations to multi-variable cases. For the two-variable case, Appell
introduced four types of series $F_1$ to $F_4$. So far, $F_1$ is directly
related to the modular curvature functions. To deal with some symbolic
computation, we need  $F_2$ as a bridge. 
\begin{align}
    F_1(a;b,b';c;x;y) &= \sum_{m=0}^\infty \sum_{n=0}^\infty
    \frac{
        (a)_{m+n} (b)_m (b')_n
    }{m! n! (c)_{m+n} } x^m y^n
    \label{eq:hgfun-defn-series-APF1} \\
    F_2(a;b,b';c,c';x,y) &= \sum_{m=0}^\infty \sum_{n=0}^\infty
\frac{
        (a)_{m+n} (b)_m (b')_n
    }{m! n! (c)_{m+n}  (c')_{n+m} } x^m y^n,
\label{eq:hgfun-defn-series-APF2}
\end{align}
where Pochhammer symbols \eqref{eq:hggeofun-defn-Pochh-sym} is used in the
coefficients.  
% The family of two variable functions can be expressed as a closed form in terms
% of the Appell hypergeometric functions (also known as two-variable
% hypergeometric functions) $ F_1(a;b,b',c;x,y)$. 
They have a double integral representation:
\begin{align}
    \begin{split}
    &\,\, \frac{\Gamma(b) \Gamma(b') \Gamma(c-b-b')}{\Gamma(c)} 
    F_1(a;b,b',c;x,y) \\
    =& \,\,
    \int_0^1 \int_0^{1-t} u^{b-1} v^{b'-1} (1-u-v)^{c-b-b'-1}
    (1-x u - y v)^{-a} dudv  .  
\end{split}
    \label{eq:hygeofun-APF1-defn} 
    \\
    \begin{split}
        &\,\, \frac{\Gamma(b) \Gamma(b') \Gamma(c-b)
            \Gamma(c'-b')
        }{\Gamma(c) \Gamma(c')
        } 
        F_2(a;b,b';c,c';x,y) \\
    =& \,\,
    \int_0^1 \int_0^{1} u^{b-1} v^{b'-1} (1-u)^{c-b-1}
(1-v)^{c'-b'-1}
    (1-x u - y v)^{-a} dudv  .  
    \end{split}
    \label{eq:hygeofun-APF2-defn}
\end{align}

Parallel to the differential system  for  Gaussian hypergeometric functions, we
have a similar relations for rising the parameters via differential operators: $x\partial_x$ and $y\partial_y$.
Denote by $F_1 \defeq F_1(a;b,b',c;x,y)$ and $F_1(a+)
\defeq F_1 ( a+1;b,b',c;x,y)$,  same pattern applies to $F_1(b+)$, $F_1(b'+)$ and
$F_1(c+)$, then
\begin{align}
    \partial_x F_1 = F_1(a+,b+,c+), \,\,
    \partial_y F_1 = F_1(a+,b'+,c+), 
    \label{eq:hgeofun-f1abc+-diffsys} 
\end{align}
also
\begin{align}
    F_1(a+) &=   a^{-1}(a+x\partial_x + y\partial_y)F_1 
    \label{eq:hgeofun-f1a+-diffsys} \\
F_1(b+) &=  b^{-1}( b+x\partial_x )F_1 
\label{eq:hgeofun-f1b+-diffsys} \\
    F_1(b'+) &=  b'^{-1}( b'+y\partial_y )F_1 
    \label{eq:hgeofun-f1b'+-diffsys} \\
F_1 &=   c^{-1}(c+ x \partial_x + y\partial_y )F_1(c+) 
\label{eq:hgeofun-f1c+-diffsys}.
\end{align}
For $F_1$ itself, it is a solution of the PDE system:
\begin{align}
    \sbrac{
    x(1-x) \partial_x^2 + y(1-x) \partial_x \partial_y
    + [c-(a+b+1)] \partial_x - by \partial_y -ab
}F_1 &=0
    \label{eq:hgeofun-} \\
 \sbrac{
    y(1-y) \partial_y^2 + x(1-y) \partial_x \partial_y
    + [c-(a+b'+1)] \partial_x - b'x \partial_y -ab'
}F_1 &=0
\end{align}

The $F_1$ family reduces to the hypergeometric functions in the situations:
\begin{align}
    \label{eq:hgfun-F1toGF21-2}
    \begin{split}
    F_1(a;b,b';c;0,y) &= \pFq21(a,b';c;y), \\
    F_1(a;b,b';c;x,0) &= \pFq21(a,b;c;x).    
    \end{split}
    \end{align}
In addition,
\begin{align}
    F_1(a;b,b';c;x,x) &= (1-x)^{c- a-b-b'} 
    \pFq21(c-a;c-b-b';c;x) \nonumber\\
    &= \pFq21(a,b;b+b';x),  \nonumber\\
    F_1(a;b,b';b+b';x,y) &= (1-y)^{-a} \pFq21(a,b;b+b';\frac{x-y}{1-y}).
    \label{eq:hgfun-F1toGF21-1}
\end{align}

The $F_1$ family can be computed via the reduction formula, cf. \cite[Sec.
16.16]{MR2723248} or \cite[Sec. 5.10, 5,11]{MR0058756},
\begin{align}
\begin{split}
     F_1(a,b,b',c;x,y) &= (x/y)^{b'} F_2 (b+b';a,b';c,b+b',x,1-x/y) \\
     &= (y/x)^{b} F_2(b+b';a,b;c,b+b',y,1-y/x)       .
    \end{split}
    \label{eq:hgeofun-APF2toAPF1}
\end{align}

\begin{prop}[\cite{MR2107356}, Theorem 2]
    For $a \in \mathbb{C}$, $b \in \mathbb{C}\setminus \Z_{\le 0}$,  
    $p,q \in \Z_{\ge 0}$,  $p<q$ 
    and $\abs x + \abs y <1$,
    \begin{align}
       \begin{split}
        &\,\, F_2(q+1,a,p+1;b,p+2;x,y) \\
        =&\,\,
        -\frac{p!}{q(1-q)_p} \frac{p+1}{y^{p+1}} 
        \pFq21 (a,q-p;b;x)  \\ 
        + & \frac{p+1}{y^{p+1}} \sum_0^p \frac{(-1)^{k}}{(q-k) (1-y)^{q-k}}     
        \binom pk \sum_0^{p-k} (-x)^{m} \binom{p-k}{m} \frac{ (a)_m}{(b)_m} \\
        \cdot & \pFq21 \brac{a+m,q-k;b+m; \frac{x}{1-y} } .\\
        \end{split}
        \label{eq:hpgeofun-F2toGF21-thm}
    \end{align}
\label{prop:hpgeofun-F2toGF21-thm}
\end{prop}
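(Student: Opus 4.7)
The plan is to reduce the double integral representation \eqref{eq:hygeofun-APF2-defn} of $F_2$ to a single Euler integral by evaluating the inner $v$-integral in closed form, and then to recognize each resulting single integral as a $\pFq21$ via \eqref{eq:hgeo-defn-pFq21}. Writing $A:=1-xu$, the representation yields
\begin{align*}
F_2(q+1,a,p+1;b,p+2;x,y) = \frac{(p+1)\Gamma(b)}{\Gamma(a)\Gamma(b-a)} \int_0^1 u^{a-1}(1-u)^{b-a-1}\, I(u)\, du,
\end{align*}
where $I(u)=\int_0^1 v^p (A-yv)^{-(q+1)}dv$. Since $p\in\Z_{\ge0}$, after the substitution $w=A-yv$ and the binomial expansion $(A-w)^p=\sum_{k=0}^{p}\binom{p}{k}A^{p-k}(-w)^k$, one obtains
\begin{align*}
I(u) = y^{-(p+1)}\sum_{k=0}^{p}\frac{(-1)^{k}\binom{p}{k}}{k-q}\Big[A^{p-k}(A-y)^{k-q}-A^{p-q}\Big].
\end{align*}

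Next I would integrate $I(u)$ against $u^{a-1}(1-u)^{b-a-1}$ term by term. The ``constant-in-$k$'' piece assembles, via partial fractions applied to $\prod_{j=0}^{p}(q-j)^{-1}$, into the identity
\begin{align*}
\sum_{k=0}^{p}\frac{(-1)^k\binom{p}{k}}{q-k} = \frac{p!}{q(1-q)_p},
\end{align*}
so the $A^{p-q}=(1-xu)^{-(q-p)}$ contribution evaluates through Euler's integral to exactly the first term $-\dfrac{p!}{q(1-q)_p}\dfrac{p+1}{y^{p+1}}\pFq21(a,q-p;b;x)$ on the right-hand side of \eqref{eq:hpgeofun-F2toGF21-thm}. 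For the remaining pieces, factor $(A-y)^{k-q} = (1-y)^{k-q}\bigl(1-\tfrac{xu}{1-y}\bigr)^{k-q}$ and expand the polynomial $(1-xu)^{p-k}=\sum_{m=0}^{p-k}\binom{p-k}{m}(-xu)^m$. Each resulting integral
\begin{align*}
\int_0^1 u^{a+m-1}(1-u)^{b-a-1}\Bigl(1-\tfrac{xu}{1-y}\Bigr)^{k-q}du = \frac{\Gamma(a+m)\Gamma(b-a)}{\Gamma(b+m)}\,\pFq21\!\left(a+m,q-k;b+m;\tfrac{x}{1-y}\right)
\end{align*}
is again a Gauss integral, contributing the factor $(a)_m/(b)_m$ after cancellation with $\Gamma(a)\Gamma(b-a)$. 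Reassembling gives the advertised double sum.

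The computation is entirely mechanical, and the only genuinely delicate point is convergence and legitimacy of the term-by-term manipulations. Under the hypothesis $|x|+|y|<1$, the integrand $(1-xu-yv)^{-(q+1)}$ is uniformly bounded on $[0,1]^2$ so all exchanges of sum and integral are trivially justified; moreover $|1-y|\ge 1-|y|>|x|$ forces $|x/(1-y)|<1$, placing the final $\pFq21$'s inside their disk of convergence. The condition $p<q$ is used only to ensure that the denominators $q-k$ ($0\le k\le p$) never vanish, so the partial-fraction identity and the elementary antiderivative $\int w^{k-q-1}dw = w^{k-q}/(k-q)$ both make sense. If desired, the constraints $\Re a>0$, $\Re(b-a)>0$ needed for the Euler integral can be removed afterwards by analytic continuation in $a$ and $b$.
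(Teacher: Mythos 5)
The paper never proves this proposition: it is imported verbatim from the cited reference and used as a black box, so your derivation is a self-contained addition rather than an alternative to an argument in the text. Your route --- evaluating the inner $v$-integral of the Euler representation \eqref{eq:hygeofun-APF2-defn} in closed form (possible precisely because $b'=p+1$ and $c'-b'=1$ make the $v$-integrand elementary), then recognizing each surviving $u$-integral as a Gauss integral via \eqref{eq:hgeo-defn-pFq21} --- is sound, and every piece does land on the stated right-hand side: the prefactor $\frac{(p+1)\Gamma(b)}{\Gamma(a)\Gamma(b-a)}$ is the correct specialization of \eqref{eq:hygeofun-APF2-defn}, the partial-fraction identity $\sum_{k=0}^{p}(-1)^k\binom{p}{k}(q-k)^{-1}=p!/\bigl(q(1-q)_p\bigr)$ is correct, the factorization $A-y=(1-y)\bigl(1-\tfrac{xu}{1-y}\bigr)$ produces exactly the $(1-y)^{-(q-k)}$ and $\pFq21\bigl(a+m,q-k;b+m;\tfrac{x}{1-y}\bigr)$ factors, and the Gamma-quotients collapse to $(a)_m/(b)_m$. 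Your convergence remarks and the role of $p<q$ in keeping $q-k\neq0$ are also handled correctly.

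One slip to fix: your displayed formula for $I(u)$ has the wrong overall sign. The substitution $w=A-yv$ gives $I(u)=y^{-(p+1)}\int_{A-y}^{A}(A-w)^{p}w^{-(q+1)}\,dw$, so after the binomial expansion the bracket should be $A^{p-q}-A^{p-k}(A-y)^{k-q}$ over $k-q$ (equivalently $A^{p-k}(A-y)^{k-q}-A^{p-q}$ over $q-k$), which is the negative of what you wrote; a sanity check with $p=0$, $q=1$, where $I(u)=y^{-1}\bigl((A-y)^{-1}-A^{-1}\bigr)$, confirms this. As written, your $I(u)$ would flip the sign of both terms of \eqref{eq:hpgeofun-F2toGF21-thm}, yet your subsequent assembly silently uses the correct sign and arrives at the right answer. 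Since the error is confined to the intermediate display and the final bookkeeping is consistent with the corrected version, this is a typo rather than a gap, but it should be repaired before the argument is recorded.
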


%\section{Integration Lemmas}

\section*{Acknowledgement}
The author would like to thank MPIM, Bonn for providing marvelous working
environment. 
%----------------------------
\bibliographystyle{halpha}

\bibliography{mylib}

\end{document}